\newcommandx{\unsure}[2][1=]{\todo[linecolor=red,backgroundcolor=red!25,bordercolor=red,#1]{#2}}
\newcommandx{\change}[2][1=]{\todo[linecolor=blue,backgroundcolor=blue!25,bordercolor=blue,#1]{#2}}
\newcommandx{\info}[2][1=]{\todo[linecolor=OliveGreen,backgroundcolor=OliveGreen!25,bordercolor=OliveGreen,#1]{#2}}
\newcommandx{\improvement}[2][1=]{\todo[linecolor=Plum,backgroundcolor=Plum!25,bordercolor=Plum,#1]{#2}}
\newcommandx{\thiswillnotshow}[2][1=]{\todo[disable,#1]{#2}}
\renewcommand{\le}{\leqslant}
\renewcommand{\ge}{\geqslant}
\renewcommand{\div}{\operatorname{div}}
\newcolumntype{L}[1]{>{\raggedright\arraybackslash}p{#1}}
\newtheorem{thm}{Theorem}[section]
\newtheorem{lemma}[thm]{Lemma}
\newtheorem{proposition}[thm]{Proposition}
\newtheorem{e-definition}[thm]{Definition}
\newtheorem{remark}[thm]{\bf Remark\/}
\numberwithin{equation}{section}
\newcommand\appendix@section[1]{%
\refstepcounter{section}%
\orig@section*{Appendix \@Alph\c@section: #1}%
}
\let\orig@section\section
\g@addto@macro\appendix{\let\section\appendix@section}
 \title{Non-separable Mean Field Games for Pedestrian Flow: Generalized Hughes Model}     
 \author{Mohamed Ghattassi \thanks{NYUAD Research Institute, New York University Abu Dhabi, PO Box 129188, Abu Dhabi, United Arab Emirates, {\sf mg6888@nyu.edu}} \quad Nader Masmoudi \thanks{Department of Mathematics, New York University in Abu Dhabi, Saadiyat Island, P.O. Box 129188, Abu Dhabi, United Arab Emirates-- Courant Institute of Mathematical Sciences, New York University, 251 Mercer Street, New York, NY 10012, USA, {\sf nm30@nyu.edu}}}
\date{\today}
\begin{document}                  
\maketitle   
 \tableofcontents

\begin{abstract}


In this paper, we present a new generalized Hughes model designed to intelligently depict pedestrian congestion dynamics, allowing pedestrian groups to either navigate through or circumvent high-density regions. First,  we describe the microscopic settings of the model. The corresponding optimization problems are deterministic and can be formulated by a closed-loop model predictive control strategy. This microscopic setup leads in the mean-field limit to the generalized Hughes model which is a class of non-separable mean field games  system, i.e., Fokker-Planck equation and viscous Hamilton-Jacobi Bellman equation are coupled in a forward-backward structure. We give an overview on the mean field games in connection to our intelligent fluid model. Therefore, we show the existence of weak solutions to the generalized Hughes model and analyze the vanishing viscosity  limit of weak solutions. Finally, we illustrate the generalized Hughes model with various numerical experiments.

   \vspace{0.2cm}
\noindent {\bf Keywords:} Crowd dynamics, generalized Hughes model, non-separable mean field games, existence of weak solutions, vanishing viscosity limit.
\end{abstract}

\section{Introduction}
In recent years, the study of pedestrian dynamics has become a focal point for numerous researchers in various scientific disciplines. Originating from the realm of traffic engineering, the movement and behavior of pedestrian crowds have garnered increasing attention from the mathematics community. For an in-depth exploration of modeling and analytical challenges in this domain, please refer to \cite{bellomo2022towards,bellomo2023human}. Modeling pedestrian dynamics can be classified crowd by three possible modeling scales (i.e. microscopic, mesoscopic, macroscopic) as follows:
\begin{itemize}
\item  \textbf{Microscopic (individual-based) Models:} force-based models such as the social force model, see \cite{helbing1995social,helbing2000simulating,aurell2018mean,aurell2020behavior,li2020approach}, cellular automata approaches, \cite{burstedde2001simulation,burger2011continuous}.
\item  \textbf{Mesoscopic (Kinetic) Models:} The interactions between individuals are modeled by the collisions where the ideas are initially used on the gas kinetics; see \cite{bellomo2013microscale,liao2023kinetic,bellomo2023human} and references therein.
\item  \textbf{Macroscopic (hydrodynamic) Models:} The macroscopic model uses an Eulerian-type hydrodynamic description of a system of crowd dynamics, see \cite{lachapelle2011mean,bellomo2015multiscale,borzi2020fokker,liang2021continuum,bellomo2023human}, and some variants of the popular Hughes model\cite{huang2009revisiting,burger2014mean,carrillo2016improved,fischer2020micro,herzog2020optimal,pietschmann2023numerical}.
\end{itemize}

In 2002, Roger Hughes proposed a macroscopic model for crowd dynamics in which individuals seek to minimize their travel time by avoiding regions of high density. The celebrated Hughes model describes fast exit and evacuation scenarios, where a group of people wants to leave a domain $\Omega \subset\mathbb{R}^{2}$ with one or several exits/doors and/or obstacles as fast as possible. The macroscopic model for pedestrian dynamics in \cite{Hughes2002continuum} is based on a continuity equation (describing the evolution of crowd density) and an Eikonal equation (giving the shortest weighted distance to an exit). The model is given by
\begin{align}
\partial_{t}\rho -div(\rho f^{2}(\rho)\nabla \phi )&=0 \label{FluidEq}\\
|\nabla\phi| &=\frac{1}{f(\rho)}\label{EikEq}\\
\rho(t=0,x)&= \rho_{in}(x),\label{FluidIn}
\end{align}
where $x\in \Omega$ denotes the position in space, $t\in (0,T]$, $T>0$, the time variable  and $\nabla$ the gradient with respect to the space variable $x$. The function $\rho$ corresponds to the pedestrian density, $\rho_{in}$ is the initial pedestrian density and $\phi$ the weighted shortest distance to the exit. $f$ is a function introducing saturation effects such as $f(\rho)=\rho_{m}-\rho$, where $\rho_{m}$ corresponds to the maximum scaled pedestrian density. Hughes model \eqref{FluidEq}-\eqref{FluidIn} is supplemented with different boundary conditions for the
walls and the exits.  We assume that the boundary $\partial \Omega$ of our domain is subdivided into three parts: inflow $\Gamma$, outflow $\Sigma$ and insulation $\Gamma_{a}=\partial \Omega / (\Gamma \cup\Sigma)$ with $\Gamma \cap \Sigma =\emptyset$. We assume that particles enter the domain $\Omega$ on $\Gamma$ with boundary pedestrian density $\rho_{b}$. On the remaining part of the boundary $\Gamma_{a}$, we impose no flux conditions 
\begin{align}
  \rho &= \rho_{b}  \qquad (t,x)\in [0,T]\times\Gamma\label{BGam}\\ 
  \nabla\rho\cdot {\bf n(x)}&=0 \qquad (t,x)\in [0,T]\times\Gamma_{a}\label{BGamx}
\end{align}
where {\bf n(x)} denotes the outer normal vector to the boundary. For the Eikonal equation \eqref{EikEq}, we consider the non-homogenous Dirichlet boundary condition
\begin{equation}
  \phi = \phi_{b}  \qquad (t,x)\in [0,T]\times\partial\Omega \label{EikonalBC}
  \end{equation}
  where the function $\phi_{b}$ takes zeros on the exit $\Sigma$. The system \eqref{FluidEq}-\eqref{FluidIn} is a highly nonlinear coupled system of partial differential equations (PDE). Few analytic results are available, all of them restricted to spatial dimension one. The main difficulty comes from the low regularity of the potential $\phi$, which is only Lipschitz continuous. For existence and uniqueness results of a regularized problem in 1D, we refer the reader to \cite{burger2014mean,amadori2023mathematical} and references therein.

%
%
%
%
%
%
%
%
%
%
%
%
%
In this paper, we present a generalization of the Hughes model, which tries to avoid the high-density with local vision via partial knowledge of the pedestrian density. First, we discuss the proper modeling setup, the microscopic description, and the derivation of the macroscopic generalized Hughes (GH) model. The generalization of the model consists of taking on consideration human behavioral by avoiding or traveling to the high-density regions. 

Quantification of properties related to pedestrian crowds plays a pivotal role in comprehending pedestrian flows and implementing real-time control mechanisms. Often, crowd models rooted in physics and fluid dynamics represent individuals as particles, yielding satisfactory results under specific conditions. However, these models exhibit limitations in their general applicability, underscoring the importance of gaining a deeper understanding of the cognitive and psychological facets of human motion within crowds. In our present study, we propose the introduction of a new parameter, denoted as $\beta$, into the Hughes model, serving as a safety crowd parameter. The parameter $\beta$ is versatile and can be tailored to various characteristics of pedestrians, such as psychological profiles, walking speeds, perception abilities, traversal abilities, physical sizes, etc.

We establish the relationship between the GH model and the mean-field games (MFGs for short) system, providing an overview of recent developments in the field of mean field games. Additionally, we give proof of the existence of weak solutions for the GH model and, under appropriate monotonicity conditions, demonstrate uniqueness results. Subsequently, we study the vanishing viscosity limit of weak solutions. 


Finally, we present a set of numerical results aimed at dissecting the impact of the parameter $\beta$ within our intelligent fluid model. For these numerical experiments, we employ the finite element method (FEM), implemented through the FreeFem++ library. It is worth noting that this choice significantly simplifies the computational complexity for the authors; however, it necessitates the inclusion of some viscosity within the model to ensure numerical convergence. Additionally, alternative numerical approaches such as the Finite Volume Scheme (FVM), which has been successfully applied to the viscous-hughes model in prior work \cite{twarogowska2014macroscopic}, can offer valuable insights and comparisons. In fact, in a related study \cite{twarogowska2014macroscopic}, the authors conducted a comparative analysis of solution behaviors between the Hughes model and a second-order model based on a mixed finite volume method. Furthermore, we explore the application of the WENO scheme for the density equation and the sweeping WENO method for the HJB-type equation, as demonstrated in previous research by Huang et al. \cite{huang2009revisiting}. However, we have the finite difference method for MFGs introduced in \cite{achdou2010mean} for stationary and time-dependent MFGs. The Lax-Friedrichs or Godunov-type schemes are introduced to approximate Hamiltonians. The Kolmogorov-type equation is discretized in such a way that it preserves the adjoint structure in the MFG. We also refer the readers to \cite{achdou2016convergence, bakaryan2023discrete} and the references therein for numerical methods and convergence results of different discrete schemes for second-order mean field games.

%


This paper is structured as follows. We start in Section \ref{sec0} with a microscopic description of the GH model. Then, this microscopic setup leads in the mean-field limit to the GH model which is a class of MFGs systems. An overview of the results for the existence and uniqueness of the MFG system is presented in Section \ref{sec3}.  In Section \ref{sec4}, we give the existence and uniqueness of weak solution for our fluid model. We then analyze the vanishing viscosity limit of weak solutions. Finally, in Section \ref{sec6} we show some numerical experiments based on the FEM discussing the effect of the parameter $\beta$ and viscosity in the GH model.

\section{Generalized Hughes Model} \label{sec0}

\subsection{ Microscopic description: Mean-field limit} \label{sec1}
The objective of this section is to give a derivation of our GH model from the microscopic level and its mean-field limit. We consider a system of controlled individuals (also referred to as particles or players) in mutual interaction, each pedestrian (or player) controls his own private state $x(t)$ at time $ t \in [0,T]$, when $T>0$, driven by the vector-valued drift function $b$ value in some subset $\mathcal{A}$ of an euclidian space. We assume that $x(t)$ is a Markov diffusion process governed by the following stochastic differential equation (SDE):
\begin{equation}\label{EQStoc}
dx(t)=b(t,x(t),u(t,x(t))) dt  + \sqrt{2\sigma} dW(t)
\end{equation}
where $dW(t) \in \mathbb{R}^{2}$ is a $2d$ brownian motion process, with stochastically independent components. $x(t)$ denotes the position of a pedestrian in a domain $\Omega \subset \mathbb{R}^{2}$, and $b = (b_{1}, b_{2})$ denotes the velocity field of motion of the pedestrian. For simplicity,
we assume that the dispersion coefficient $\sigma$ is a constant but it can depend on the control strategy. In the control theory framework, the function $u$ uses the current value $x(t)$ to affect the dynamics of the stochastic process by adjusting the drift function. Similarly to \eqref{EQStoc}, we consider the following \textit{action functional}
\begin{equation}\label{EQFUNctional}
J\left(t,x,u \right)=\mathbb{E} \Big [ \int_{t}^{T} \mathbf{L}\left(x(s),u(s,x(s))\right) ds+g(x(T))\,\,\, | \,\,\, x(t)=x \Big ],
\end{equation}
which is an expectation to the process $x(t)$ taking the value $x$ at time $t$. We refer to the functions $\mathbf{L}$ and $g$ as the running cost and the terminal cost functions, respectively. Now, the optimal control $\bar{u}$ that minimizes $J\left(t,x,u \right)$ for the process \eqref{EQStoc} is given by
\begin{equation}\label{OptimalcontrolU}
\bar{u}=\text{argmin}_{u\in \mathcal{A}} J\left(t,x,u \right).
\end{equation}
Furthermore, we define the value function $\phi$ of the stochastic dynamics control problem as 
\begin{equation}\label{Valuefun}
\phi(t,x)=\min_{u\in \mathcal{A}} J\left(t,x,u \right)=J\left(t,x,\bar{u} \right).
\end{equation}
The value function is a critical element in this framework. For each player, it is the cost functional of the agent determined in the Nash equilibrium. Isaacs in \cite{isaacsdifferential} was the first to link the value function in these differential games to the Hamilton-Jacobi equations. Crandall, Ishii, and Lions then further developed this concept in their work on viscosity solutions; see \cite{ishii1990viscosity,crandall1992user} for a comprehensive overview.

We now state the theorem showing that the value function $\phi$ is the solution to the Hamilton-Jacobi-Bellman (HJB) equation.
\begin{thm}\label{TheoremHJBE}
Assume that $x(t)$, for $t\in[0,T]$ solving \eqref{EQStoc} with a control function $u$ and that the function $\phi$ defined by \eqref{Valuefun} is bounded and smooth. Then, the value function $\phi$ verifies the following HJB equation.
\begin{equation}\label{Hamiltoneq}
\begin{cases}
-\partial_{t}\phi-H(t,x,\nabla_{x}\phi,\nabla^{2}_{x}\phi)=0,\qquad (t,x)\in [0,T)\times  \Omega\\
\phi(t,x)=g(x)\qquad\qquad\qquad \qquad\qquad x\in  \partial\Omega
\end{cases}
\end{equation}
where the Hamiltonian function is defined by 
\begin{equation}\label{EQHAM}
H(t,x, \nabla_{x}\phi,\nabla^{2}_{x}\phi) := \min_{v \in \mathcal{A}} \left(  \sum_{i=1}^{2}b_{i}(t,x,u(t,x))\partial_{x_{i}}\phi+\sigma\Delta \phi(t,x) +\mathbf{L}(x,u(t,x))   \right) .
\end{equation}
\end{thm}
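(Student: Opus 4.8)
The plan is to prove this via the Dynamic Programming Principle combined with It\^o's formula; since $\phi$ is assumed bounded and smooth there is no need for viscosity-solution machinery, and the statement is essentially of verification type.

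\emph{Step 1: the Dynamic Programming Principle.} First I would establish the DPP: for every $x\in\Omega$ and every $0\le t<t+h\le T$,
\begin{equation*}
\phi(t,x)=\min_{u\in\mathcal{A}}\,\mathbb{E}\Big[\int_{t}^{t+h}\mathbf{L}\big(x(s),u(s,x(s))\big)\,ds+\phi\big(t+h,x(t+h)\big)\;\Big|\;x(t)=x\Big].
\end{equation*}
This follows from \eqref{Valuefun}--\eqref{EQFUNctional} by conditioning on the information available at time $t+h$, splitting the running-cost integral in \eqref{EQFUNctional} at $t+h$, and invoking the Markov property of the diffusion \eqref{EQStoc}. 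The inequality ``$\le$'' is immediate from the definition of the minimum; the reverse inequality ``$\ge$'' is the delicate part, requiring one to paste together $\varepsilon$-optimal feedback controls via a measurable-selection argument. I expect this to be the main obstacle of the whole proof, although it is standard and uses only that the control set $\mathcal{A}$ is bounded and $\phi$ is bounded.

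\emph{Step 2: It\^o expansion along trajectories.} Next, fixing an admissible control $u$, I would apply It\^o's formula to $s\mapsto\phi(s,x(s))$ on $[t,t+h]$. Using \eqref{EQStoc},
\begin{equation*}
\phi\big(t+h,x(t+h)\big)=\phi(t,x)+\int_{t}^{t+h}\Big(\partial_{s}\phi+\sum_{i=1}^{2}b_{i}(s,x(s),u)\,\partial_{x_{i}}\phi+\sigma\Delta\phi\Big)(s,x(s))\,ds+\sqrt{2\sigma}\int_{t}^{t+h}\nabla_{x}\phi(s,x(s))\cdot dW(s).
\end{equation*}
Since $\phi$ is smooth and bounded and $\mathcal{A}$ is bounded, $\nabla_{x}\phi$ stays bounded along trajectories, so the It\^o integral is a martingale with zero expectation. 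Taking conditional expectation given $x(t)=x$ and substituting into the DPP then gives, for every admissible $u$,
\begin{equation*}
0\le\mathbb{E}\Big[\int_{t}^{t+h}\Big(\partial_{s}\phi+\sum_{i=1}^{2}b_{i}(s,x(s),u)\,\partial_{x_{i}}\phi+\sigma\Delta\phi+\mathbf{L}(x(s),u)\Big)(s,x(s))\,ds\;\Big|\;x(t)=x\Big],
\end{equation*}
with equality when $u$ is the optimal control $\bar u$ of \eqref{OptimalcontrolU}.

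\emph{Step 3: passing to the limit.} Finally I would divide by $h$ and let $h\downarrow0$. By continuity of $s\mapsto x(s)$ and of the integrand and by dominated convergence (again using the boundedness of $\phi$, of its first two derivatives, of $b$ and of $\mathbf{L}$ on the relevant compact sets), the right-hand side converges to the integrand evaluated at $(t,x)$. Choosing $u$ to equal an arbitrary fixed value $v\in\mathcal{A}$ near time $t$ yields
\begin{equation*}
0\le\partial_{t}\phi(t,x)+\sum_{i=1}^{2}b_{i}(t,x,v)\,\partial_{x_{i}}\phi(t,x)+\sigma\Delta\phi(t,x)+\mathbf{L}(x,v)\qquad\text{for every }v\in\mathcal{A},
\end{equation*}
hence $0\le\partial_{t}\phi+H$ by the definition \eqref{EQHAM} of the Hamiltonian. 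Repeating the computation along $\bar u$ (or, to avoid any regularity hypothesis on the optimal feedback, along $\varepsilon$-optimal controls and letting $\varepsilon\to0$) gives the reverse inequality, so $-\partial_{t}\phi-H=0$ on $[0,T)\times\Omega$, which is \eqref{Hamiltoneq}; the boundary identification $\phi=g$ is read directly off \eqref{Valuefun}, since on $\partial\Omega$ (resp.\ at $t=T$) the only surviving contribution to \eqref{EQFUNctional} is the cost $g$. As noted, the genuine difficulty is concentrated in the ``$\ge$'' half of Step 1; were $\phi$ merely continuous, Steps 2--3 would have to be replaced by the Crandall--Ishii--Lions viscosity argument, testing $\phi$ against smooth functions touching it from above and below.
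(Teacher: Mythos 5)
The paper offers no proof of Theorem \ref{TheoremHJBE} beyond a pointer to \cite[Theorem 3.1, p.~157]{zbMATH05032360}, and your dynamic-programming-principle-plus-It\^o verification argument is precisely the standard proof given in that reference, so your approach is essentially the same as the paper's (delegated) one and is correct under the stated smoothness and boundedness hypotheses. The only point worth flagging is the boundary condition: the identification $\phi=g$ on $\partial\Omega$ follows from \eqref{Valuefun} only if one reads $T$ in \eqref{EQFUNctional} as the exit time of $x(\cdot)$ from $\Omega$, which is a gap in the theorem's formulation rather than in your argument.
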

For more details  we refer the reader to \cite[Theorem 3.1  page 157]{zbMATH05032360}

By assuming differentiability with respect to the control function $u$ in \eqref{EQHAM}, then the optimal control $\bar{u}$ should satisfy at each time $t$ and for each point $x$ the optimality condition as follows: 
\begin{equation}\label{OptimalityHAM}
 \sum_{i=1}^{2}\partial_{u} b_{i}(t,x,u(t,x))\partial_{x_{i}}\phi  +\partial_{u}\mathbf{L}(x,u(t,x)) =0.
\end{equation}
Then by using the assumption that this process has an absolutely continuous probability measure, we can formulate the expectation in \eqref{EQFUNctional} in terms of probability density functions (PDF) governed by the Kolmogorov or Fokker-Planck (FP) equation problem with initial density distribution $\rho_{0}=\delta_{x(t)-x}$ at time $t=0$. Thus, the functional \eqref{EQFUNctional} becomes
\begin{equation}\label{EQFUNctionalcont}
J\left(u,\rho \right)= \int_{t}^{T}\int_{\Omega}\mathbf{L}\left(x,u(t,x)\right) dx ds +  \int_{\Omega}g(x)\rho(T,x) dx
\end{equation}
Therefore, we can state the optimization problem \eqref{OptimalcontrolU} as an FP optimal control problem where an optimal control $u$ in the admissible set $\mathcal{A}$ sought that minimizes the functional \eqref{EQFUNctionalcont}. We are identifying the chosen admissible set of Markov control policies to the admissible set of controls in the FP optimal control formulation. To characterize the optimal FP solution to this problem, we introduce the following Lagrange function.
\begin{equation}
\begin{aligned}
\mathrm{L}(\rho, \phi,u) :=&\int_{0}^{T}\int_{\Omega}\mathbf{L}\left(x,u(t,x)\right) dx ds +  \int_{\Omega}g(x)\rho(T,x) dx+\int_{0}^{T}\int_{\Omega}\phi(s,x)\Big[-\partial_{s}\rho(s,x) \\& - \sum_{i=1}^{2}\partial_{x_{i}}\left(b_{i}(t,x,u(t,x))\rho(t,x)\right)+\sigma\Delta\rho(t,x) \Big] dx ds 
\end{aligned}
\end{equation}

Consequently, the optimal control solution is characterized as the solution to the following optimality system
\begin{equation}\label{XZX0}
\partial_{\phi}\mathrm{L}(\rho, \phi,u)=0 	\iff  \quad \begin{cases}
-\partial_{t}\rho-\sum_{i=1}^{2}\partial_{x_{i}}\left(b_{i}(x,u(t,x))\right)\rho(t,x)+\sigma\Delta\rho(t,x)=0,\\
\rho(t=0,x)=\rho_{0}(x),
\end{cases}
\end{equation}

\begin{equation}\label{XZX1}
\partial_{\rho}\mathrm{L}(\rho, \phi,u)=0  	\iff \quad \begin{cases}
\partial_{t}\phi +\sum_{i=1}^{2}b_{i}(x,u(t,x))\partial_{x_{i}}\phi+\sigma\Delta\phi(t,x) +\mathbf{L}(x,u(t,x))=0,\\
\phi(T,x)=g(x),
\end{cases}
\end{equation}

and the optimality condition 
\begin{equation}\label{OptimaltyCond}
\partial_{u}\mathrm{L}(\rho, \phi,u)=0 	\iff \quad \rho(t,x)\left(  \sum_{i=1}^{2} \partial_{u}b_{i}(x,u(t,x)) \partial_{x_{i}}\phi  +\partial_{u}\mathbf{L}(x,u(t,x)) \right) =0.
\end{equation}

A sufficient condition to ensure that \eqref{OptimaltyCond} holds is  to satisfy the optimality condition \eqref{OptimalityHAM} within the context of the Hamilton-Jacobi-Bellman (HJB) formulation.

\subsection{ Macroscopic description of the GH model} \label{sec21}
This section is dedicated to the formal derivation of both the generalized and classical Hughes models within a macroscopic context, deriving them from the system defined by equations \eqref{XZX0}-\eqref{OptimaltyCond}. To begin, we will present the derivation of the Hughes model. We first take the following.

\[
b=u,\quad  \quad  \mathbf{L}(x,u)=\frac{u^{2}}{2f^{2}(\rho)} +\frac{1}{2},
\]
where $f(\rho)=(\rho_{max}-\rho)_{+}$. 

Consequently, from the optimality condition \eqref{OptimaltyCond}, we get a feedback in this form $u=-f^{2}(\rho)\nabla \phi$. Then, system \eqref{XZX0} -\eqref{OptimaltyCond} becomes
\begin{equation}\label{XZXx1}
\begin{cases}
\partial_{t}\rho -div(\rho f^{2}(\rho)\nabla \phi )-\sigma\Delta \rho=0\\
-\partial_{t}\phi +f^{2}(\rho)|\nabla\phi|^{2}-\sigma\Delta\phi-\frac{1}{2}f^{2}(\rho)|\nabla\phi|^{2} -\frac{1}{2}=0.
\end{cases}
\end{equation}
Our core assumption is that the pedestrian lacks the ability to foresee the future evolution of the population. Instead, they base their strategic decisions solely on real-time information, without the ability to predict forthcoming events. This strategic approach serves as a foundational principle supporting a particular class of quasi-stationary Mean Field Games (MFG) systems, as discussed in \cite{mouzouni2020quasi} and \cite{camilli2023quasi}.

In this context, individuals operate within a static environment. At each moment, they determine their course of action solely based on current information, devoid of any foresight into the future. Consequently, this leads to the  stationary HJB equation  coupled with an evolutive FP equation. Moreover, if we assume a complete absence of viscosity ($\sigma = 0$) within the model, we can deduce the classical Hughes model \eqref{FluidEq}-\eqref{EikonalBC}. In this macroscopic framework, individuals strive to minimize their travel time while conscientiously avoiding high-density regions.

However, we are now introducing a pioneering model that explicitly incorporates the element of congestion. To achieve this, we introduce a novel parameter, denoted as $\beta$, into the running cost term denoted by $\mathbf{L}$ (here, we adopt the notation $\mathcal{L}$ to reflect its dependence on the spatial variable $x$, density $\rho$, and the potential field $v$). This leads to the redefinition of the functional $\mathbf{L}$ as follows:

\begin{equation}\label{runningFunc}
\mathbf{L}(x,u)=\mathcal{L}(x,\rho,v)=\frac{1}{2} f^{-\beta}(\rho)\vert v\vert^{2} +\frac{1}{2} f^{2-\beta}(\rho).
\end{equation}
Therefore from the optimality condition \eqref{OptimaltyCond}, the optimal control situation  is given by $u=-f^{\beta}(\rho)\nabla \phi$. Then \eqref{XZX0} -\eqref{XZX1} becomes
\begin{equation}\label{sSa}
\begin{cases}
\partial_{t}\rho- div(\rho f^{\beta}(\rho)\nabla \phi )-\sigma\Delta \rho=0\\
-\partial_{t}\phi +\frac{1}{2} f^{\beta}(\rho)|\nabla\phi|^{2}-\sigma\Delta\phi -\frac{1}{2} f^{2-\beta}(\rho)=0.
\end{cases}
\end{equation}
Using the same argument that the generic pedestrian cannot predict the evolution of the population, but chooses its strategy only on the basis of the information available at the given instant of time, without anticipating, we can consider the quasi-stationary HJB equation as follows 
\begin{equation}\label{QuasiStat}
\begin{cases}
\partial_{t}\rho- div(\rho f^{\beta}(\rho)\nabla \phi )-\sigma\Delta \rho=0\\
\frac{1}{2} f^{\beta}(\rho)|\nabla\phi|^{2}-\sigma\Delta\phi -\frac{1}{2} f^{2-\beta}(\rho)=0.
\end{cases}
\end{equation}
Furthermore, assuming the absence of viscosity ($\sigma = 0$) in the model, we derive the following system
\begin{equation}\label{XZX1G}
\begin{cases}
\partial_{t}\rho -div(\rho f^{\beta}(\rho)\nabla \phi )=0\\
f^{2\beta -2}(\rho)|\nabla\phi|^{2} =1.
\end{cases}
\end{equation}
When $\beta = 2$, the Hughes model is obtained, in which people try to stay away from areas of high density. However, when $\beta =0$, the HJB equation suggests a connection between velocity and density, with $ |\nabla\phi| =f(\rho)$. This leads to a situation in which individuals are drawn to areas of high density, a phenomenon known as concentration. This is in stark contrast to the Hughes model, where people usually try to avoid high-density areas. An example of this behavior can be seen during the Tawaf ritual at the Kaaba, where Muslim pilgrims circle the Kaaba and many attempt to pause and kiss the Al Hajaru al Aswad.
When $\beta = 1$, the speed of pedestrians remains constant regardless of the density of the area they are in. This parameter is heavily influenced by the psychological state of the pedestrians, which is determined by the event or location. The value of $\beta$ reflects the adaptive walking strategies of pedestrians, depending on their psychological state in response to different environmental conditions or events. As an example, consider the Tawaf ritual at the Kaaba, which is performed during Hajj or Umrah. When a worshiper enters the mosque precinct. This leads to a defined area known as the \textit{Sahan}. Worshipers in this area can be divided into two groups: those who want to get close to the Kaaba and kiss the Black Stone (\textit{Al Hajar Aswad}), and those who want to avoid densely populated areas, often due to age or health issues. For the first group, their main goal is to reach the \textit{Al Hajar Aswad}, so they are not concerned with the crowd density. In this case, when $\beta = 1$, it implies that $ \nabla\phi = 1$. On the other hand, the second group of worshipers is more focused on navigating through the crowd density, as they want to avoid the hustle and bustle typically associated with densely populated areas around the Kaaba. This group includes people of advanced age, those with health concerns, expectant mothers, and others. In this case, $\beta$ remains consistently greater than 1, specifically within the range $\beta \in (1, 2]$. For $\beta \in [0, 1)$, there are at least three different scenarios to consider. To begin with, there is the case of \textit{curiosity-driven crowds}, such as those attending superstar concerts or political gatherings, who are drawn to high-density locations out of interest. Second, there is the scenario of \textit{panic}, where people flock to high-density regions because they believe that safety or something valuable can be found there. Lastly, there is the case of \textit{fear}, where individuals, when feeling scared, tend to seek proximity to high-density areas as a response. These three scenarios represent different motivations for individuals in situations with $\beta \in [0, 1)$.

%
%

We now assume that the curve is described parametrically in two-dimensional (2D) by the function ${\bf x}(s) = \left(x_{1}(s),x_{2}(s)\right)$. We define ${\bf z}(s)=\phi({\bf x}(s) )$, and 
\[
 {\bf p}(s)=\nabla_{{\bf x}}\phi({\bf x}(s) )= \left(p_{1}(s),p_{2}(s)\right), \quad \text{such that}  \quad p_{i}(s)=\phi_{x_{i}}({\bf x}(s) ), \qquad i\in\{1,2\},
\]
where  ${\bf z}(.)$ gives the values of $\phi$ along the curve ${\bf x}(.)$,  ${\bf p}(.)$ records the values of the gradient $\nabla_{{\bf x}}\phi$ and we define  ${\bf q}(s)=\rho({\bf x}(s) )$ . We set 
\[
\dot{ {\bf p}}(s)=D_{v}\mathcal{L}({\bf x}(s),{\bf q}(s),{\bf p}(s)),\qquad (0\le s\le t)
\]
where ${\bf p}(.)$ is called the generalized momentum corresponding to the position  ${\bf x}(.)$ and the velocity $\dot{ {\bf x}}$. Now we make the same assumption as \cite[(9), p.118]{evans2010partial}:
\begin{equation}\label{HamLag}
\begin{cases}
\text{Suppose  for all }\,\,\,\,\, x,p \in\mathbb{R}^{2}\,\,\, \text{ that the equation} \\
p=D_{v}\mathcal{L}(x,q,p),\\
\text{can be uniquely solved for}\,\, v \,\,  \text{as a smooth function of}\,\, p \,\, \text{and}\,\, x, v= {\bf v}(x,p).
\end{cases}
\end{equation}
The Hamiltonian $\mathcal{H}$ associated with the Lagrangian operator $v\longmapsto \mathcal{L}(x,\rho,v)$, is given by
\begin{equation}\label{EqHamil0}
\mathcal{H}(x,\rho,p)= p\cdot {\bf v} - \mathcal{L}(x,\rho,{\bf v}),
\end{equation}
where the function ${\bf v}(.)$ is defined implicitly by \eqref{HamLag}. Formally, we assume that $\mathcal{H}$ only depends on $p$, not on $v$.
\begin{equation}\label{EqHamil1}
D_{v}\mathcal{H}(x,\rho,p) = 0 = p- D_{v}\mathcal{L}(x,\rho,{\bf v})
\end{equation}
Then, we have ${ p =f^{-\beta}(\rho){\bf v}}$. Replacing the expression of ${\bf v}$ in  \eqref{EqHamil0}, we get 
\begin{equation}\label{EqHamil}
\begin{aligned}
\mathcal{H}(x,\rho,p)=& p\cdot {\bf v} -\left( \frac{1}{2} f^{-\beta}(\rho)\vert {\bf v}\vert^{2} +\frac{1}{2} f^{2-\beta}(\rho)\right)\\&
=f^{\beta}(\rho)|p|^{2} -\left( \frac{1}{2} f^{-\beta}(\rho) f^{2\beta}(\rho)\vert p\vert^{2} +\frac{1}{2} f^{2-\beta}(\rho)\right)\\&
=\frac{1}{2} f^{\beta}(\rho)|p|^{2} -\frac{1}{2} f^{2-\beta}(\rho),
\end{aligned}
\end{equation}
we denote by $\mathcal{H}$ (or $\mathcal{L}^{\ast}$) the Legendre transform defined by 
\begin{equation}\label{EqHamilX}
\mathcal{H}(x,\rho,p)=\mathcal{L}^{\ast}(x,\rho,p)=\sup_{q\in \mathbb{R}^{2}}[q\cdot p -\mathcal{L}(x,\rho,q)],
\end{equation}
and 
\begin{equation}\label{EqHamilY}
\mathcal{L}(x,\rho,{\bf v})=\mathcal{H}^{\ast}(x,\rho,{\bf v})=\mathcal{H}_{p}(x,\rho,p)\cdot p-\mathcal{H}(x,\rho,p),\quad \text{where}\,\,\,\,{\bf v}=\mathcal{H}_{p}(x,\rho,p),
\end{equation}

for more details we refer to  \cite[Chapter 3]{evans2010partial}. 
Now, we can write system \eqref{sSa} in the structure of second order MFGs system as follows
\begin{align}
\partial_t\rho - \nabla \cdot \left( \rho \mathcal{H}_{p}(x,\rho,\nabla \phi)\right) -\sigma \Delta  \rho= 0, &\qquad t>0, \quad x\in\Omega \label{EqFluidHd0}\\ 
-\partial_t\phi + \mathcal{H}(x,\rho,\nabla \phi) - \sigma \Delta  \phi=0, &\qquad t>0, \quad x\in\Omega \label{EqEikonalHd0}\\
\rho(t=0,x)= \rho_{0}(x),&\quad x\in\Omega \label{FluidIn2}\\ 
\phi(t=T,x)= \phi_{T}(x),&\quad x\in\Omega, \label{EikonalIn2}
\end{align}

where $\mathcal{H}$ is a convex function of $p$ and $\mathcal{H}_{p}$ represents $\frac{\partial\mathcal{H}}{\partial p}( x, \rho, p)$.
The following boundary conditions for the density equation are given by 
\begin{align}
\rho = 0,&\qquad t>0, \quad x\in\Sigma \label{Boundrho1}\\ 
\left(\sigma\nabla\rho +\rho\mathcal{H}_{p}(x,\rho,\nabla \phi)\right)\cdot n(x)= 0,&\qquad t>0, \quad x\in\Gamma_{a} \label{Boundrho2}\\ 
 \rho=\rho_{b}, &\qquad t>0, \quad x\in\Gamma, \label{Boundar3}
\end{align}
and for the HJB equation we consider the following boundary conditions 
\begin{align}
\phi = 0,&\qquad t>0, \quad x\in\Sigma \label{Boundphi1}\\ 
\nabla \phi\cdot n(x)= 0,&\qquad t>0, \quad x\in\Gamma_{a} \label{Boundphi2}\\ 
 \phi=\phi_{b}, &\qquad t>0, \quad x\in\Gamma. \label{Boundar3}
\end{align}


\subsection{Review of the literature in connection to the Mean-Field Games}\label{sec3}

 Previously, we have already shown that the GH model has the structure of a non-separable MFGs. In this section, we present an overview of the recent advanced results on the MFGs system and its derivation. MFGs have been introduced in the mathematics literature by Lasry and Lions as limits of problems from game theory, as the number of agents tends to infinity \cite{lasry2006jeux,lasry2006jeux1,lasry2007mean}. From a control theory perspective, mean field games were also introduced around the same time by Huang, Caines, and Malhame \cite{huang2007large,caines2006large}. This theory is  motivated by problems in economics and engineering and with the goal of approximating Nash equilibria of games with a large number of symmetric agents. Since their introduction, MFGs have been extensively studied in the literature and several research topics have been addressed, from both theoretical and applied perspectives. The main goal is typically to study the equilibria of such games, which are usually characterized as solutions of a system of PDEs. There are two classes of MFGs that are characterized by the separable and non-separable Hamiltonian $\mathcal{H}$. This means that it is assumed that there exist $H$ and $F$ such that $\mathcal{H}(t, x,\rho,p) = H(t,x,p)+ F(t,x,\rho)$. In such a case, the function $H$ is still known as the Hamiltonian, but $F$ is then referred to the coupling term. However, in applications, non-separable Hamiltonians are frequently of interest for example in economics, which does not tend to have this separable structure \cite{moll2016mean,achdou2014partial,achdou2022income} which is also the case of our intelligent fluid model \eqref{EqFluidHd0}-\eqref{EikonalIn2} and \eqref{Boundrho1}-\eqref{Boundar3}. 

In the case of the separable Hamiltonian, there are a number of works that prove the existence of solutions. The separability assumption, as well as other structural assumptions, such as the convexity of $H$ and the monotonicity of $F$, are helpful in the analysis of such models. For example the case of the second-order quadratic MFGs  with aggregation force 
\begin{align}
\partial_tm- \nabla \cdot \left( m\nabla u\right) - \Delta m= 0, &\qquad t>0, \quad x\in\mathbb{T}^{d} \label{EqFluidHd0C}\\ 
-\partial_t u + \frac{1}{2}|\nabla u|^{2}-  \Delta  u =-f(m) +V(x), &\qquad t>0, \quad x\in\mathbb{T}^{d} \label{EqEikonalHd0C}\\
m(t=0,x)= m_{0}(x),&\quad x\in\mathbb{T}^{d} \label{FluidInC}\\ 
u(t=T,x)= u_{T}(x),&\quad x\in\mathbb{T}^{d} \label{EikonalInC}
\end{align}
with $\mathbb{T}^{d}$ is standard flat torus (the dimension space $d$ ), $m$ is the population density, $u$ the potential, $m_{0}$ is a smooth probability density, $u_{T}$ a smooth final cost, $V$ is a bounded potential, and $f$ is chosen
 \[
 f(m)=\pm\gamma m^{\alpha},
 \]
 where $\gamma$ and $\alpha$ are  related to the aggregation force, for more details see \cite{cirant2022existence} and references therein. Moreover, in \cite{porretta2014planning,porretta2015weak} Porretta proved the existence of weak solutions of the second-order MFGs system. Then, recently in \cite{griffin2022variational} Griffin-Pickering and Mészáros studied weak solutions to a first-order mean-field games system involving kinetic transport operators. Gomes and Pimentel, in \cite{gomes2015time}, showed the existence of a strong solution with logarithmic coupling.  Gomes, Pimentel, and Sanchez-Morgado proved in \cite{gomes2015time0,gomes2016time} the existence of a strong solution for the case of super-quadratic and subquadratic Hamiltonians. However, for more references on the existence of solutions for MFGs, we refer to \cite{ferreira2018existence,ferreira2019existence,cirant2021maximal} and the references therein.

To our knowledge,  the first work on the existence of strong solution to the non-separable Hamiltonian is proved by Ambrose in \cite{ambrose2018strong} . The author presented an existence for strong solutions of the non-separable MFG system. First, a smallness condition on the data is used, and a small parameter in front of the Hamiltonian is used to compensate for the nonlinear term.  We also found  the work of  Cirant, Gianni, and Mannucci  \cite{cirant2020short} on  MFGs with non-separable Hamiltonians. The authors proved an existence theorem for non-separable mean field games in Sobolev spaces, under a smallness condition on the time horizon.

In \cite{Lions2023}, P.-L.Lions gives the general structural conditions yielding the uniqueness for the nonseparable MFG systems with local coupling
\begin{align}
\partial_t m - \sigma\Delta m -div(m\mathcal{H}_{p}(x,m, \nabla u ))= 0, &\qquad t>0, \quad x\in\Omega \label{EqFluidHd0Cun}\\ 
-\partial_t u - \sigma\Delta  u +\mathcal{H}(x,m, \nabla u ) = F(t,x,m), &\qquad t>0, \quad x\in\Omega \label{EqEikonalHd0Cun}
\end{align}
namely, that $F$ and $G$ will increase w.r.t. $m$ and that the Lasry–Lions monotonicity condition 
\begin{equation}\label{uniq}
\mathbf{H}=\begin{pmatrix}
-\frac{2}{m}\mathcal{H}_{m}(t, x,m, p)& \mathcal{H}_{m,p}(t, x,m, p )\\
 \mathcal{H}_{m,p}(t, x,m, p)& 2\mathcal{H}_{p,p}(t, x,m, p )\\
\end{pmatrix} \ge 0,
\end{equation}
is satisfied, where $\mathcal{H}_{m}$ stands for $\frac{\partial\mathcal{H}}{\partial m}(x, m, p)$ , where $\mathcal{H}_{m,p}$ stands for $\frac{\partial\mathcal{H}}{\partial m \partial p}( x, \rho, p)$, for all $x\in \Omega$, $m>0$ and $p \in \mathbb{R}^{2}$. Recently, the system extended MFGs (EMFGs) was introduced by Lions and Souganidis in \cite{lions2020extended} who coined the term extended MFGs, to simultaneously study several MFGs type problems for which, in contrast to the case of standard MFG, the vector field $\mathcal{B}$ does not necessarily equal $\rho\mathcal{H}_{p}(x,m ,\nabla \phi )$
\begin{align}
\partial_t m  - \nabla \cdot \left(\mathcal{B}(x,m ,\nabla u )\right) = 0, &\qquad t>0, \quad x\in\Omega \label{EqFluidHd0xx}\\ 
-\partial_tu + \mathcal{H}(x,m ,\nabla u )=0, &\qquad t>0, \quad x\in\Omega \label{EqEikonalHd0xx}
\end{align}

%
%

where it was shown that (EMFGs) has at most one classical solution under some  sufficient condition. It was demonstrated in \cite{munoz2022classical,munoz2023classical} that classical solutions to EMFGs in any dimension exist when the initial density has a lower bound and the following blow-up assumption is satisfied
\begin{equation}\label{Eqcor}
\lim_{m \to 0^{+}}\mathcal{H}(x,m,\nabla u) = +\infty.
\end{equation}
Recently, \cite{porretta2023regularizing} proved a similar regularity result for the separable Hamiltonian $\mathcal{H}(x,m,p) = H(x,p)-f(m)$ with a lower bound on the terminal density $m_{T}$.

We should also mention that Lasry and Lions \cite{Lions2023} introduced the \textit{master equation} in order to reformulate the original MFG system in order to take into consideration a much more complex stochastic situation; the authors introduce the \textit{master equation} in order to encompass complex situations. However, the master equation can be understood as a nonlinear transport equation in the space of probability measures. The global well-posedness of master equations requires the uniqueness of mean field equilibrium, typically under certain monotonicity conditions, we refer to \cite{cardaliaguet2019master} for more details.





\section{A weak solution for  generalized Hughes Model }\label{sec4}
This section is devoted to the proof of the existence and uniqueness of weak solutions for system \eqref{EqFluidHd0}-\eqref{EikonalIn2}. For simplicity we assume that the equation takes place in a standard flat torus $\Omega= \mathbb{T}^{2}$. The function $\mathcal{H}(x, \rho, p)$ is assumed to be measurable with respect to $(t, x)$, continuous with respect to $\rho$ and $C^{1}$with respect to p. 
We recall  our  fluid model \eqref{EqFluidHd0}-\eqref{EikonalIn2}  in the flat torus $\Omega$
\begin{align}
\partial_t\rho^{\sigma} - \nabla \cdot \left( \rho^{\sigma} \mathcal{H}_{p}(x,\rho^{\sigma},\nabla \phi^{\sigma})\right) -\sigma \Delta  \rho^{\sigma}= 0, &\qquad (t,x)\in\mathrm{Q}_{T}= (0,T)\times\Omega \label{EqFluidHd0x}\\ 
-\partial_t\phi^{\sigma} + \mathcal{H}(x,\rho^{\sigma},\nabla \phi^{\sigma}) - \sigma \Delta  \phi^{\sigma}=0, &\qquad  (t,x)\in\mathrm{Q}_{T} \label{EqEikonalHd0x}\\
\rho^{\sigma}(t=0,x)= \rho_{0}(x),&\quad x\in\Omega \label{FluidInx}\\ 
\phi^{\sigma}(t=T,x)= \phi_{T}(x),&\quad x\in\Omega.\label{EikonalInx}
\end{align}
Here, we denote the initial data as $\rho_{0}$ and $\phi_{T}$, while the Hamiltonian $\mathcal{H}$ is represented by \eqref{EqHamil}. To illustrate the dependence on the parameter $\sigma$, we utilize the index ${\sf ^{\sigma}}$ in the context of $\left(\rho^{\sigma},\phi^{\sigma}\right)$.

Now we start by giving some assumptions on the initial data. 
\begin{equation*}
\text{{\bf (AS)}} \begin{cases}
 &\text{Let}\,\,\,\,  \rho_{0}\in C(\Omega), \,\,\rho_{0} \in (0,\rho_{m}).\\ 
 &\text{Let}\,\,\,\,\phi_{T} \in  C(\Omega),\,\,\text{and there exists}\,\,\, c_{0} \in \mathbb{R}\,\,\,\text{such that} \,\, \phi_{T}\ge c_{0}.  
\end{cases}
\end{equation*}
We will start by defining a weak solution for the system of equations \eqref{EqFluidHd0x}-\eqref{EikonalInx}.
\begin{e-definition}\label{defweaksig}
A pair $(\rho^{\sigma},\phi^{\sigma}) \in L^{1}(\mathrm{Q}_{T}) \times  L^{1}(\mathrm{Q}_{T})$ is a weak solution to \eqref{EqFluidHd0x}-\eqref{EikonalInx} if 
\begin{description}
\item[ (i)] $\rho^{\sigma} \in C(0,T;L^{1}(\Omega))$, $\rho^{\sigma}\ge 0$, $\phi^{\sigma} \in L^{\infty}(0,T;L^{1}(\Omega)) \in L^{\infty}(0,T;L^{1}(\Omega)) \cap L^{q}(0,T;W^{1,q}(\Omega))$ for every $q<\frac{4}{3}$.
\item[ (ii)]  $\rho^{\sigma} f^{\beta}(\rho^{\sigma}) |\nabla \phi^{\sigma}|^{2} \in L^{1}(\mathrm{Q}_{T})$ and  $f^{\beta}(\rho^{\sigma}) |\nabla \phi|^{2} \in L^{1}(\mathrm{Q}_{T})$
\item[ (iii)]  $\phi^{\sigma} \in L^{\infty}(0,T;L^{1}(\Omega))$ is a solution of the HJB type equation in the sense of distributions
\begin{equation}\label{weakphi}
\int_{0}^{T}\int_{\Omega} \phi^{\sigma} \partial_{t}\varphi dx dt  - \sigma \int_{0}^{T}\int_{\Omega} \phi^{\sigma} \Delta \varphi dx dt  + \int_{0}^{T}\int_{\Omega} \mathcal{H}(x, \rho^{\sigma},\nabla \phi^{\sigma}) \varphi dx dt = \int_{\Omega} u_{T}\varphi(T) dx,
\end{equation}
 for every $\varphi \in C_{c}^{\infty}((0,T]\times\Omega)$.
\item[ (iv)]  $\rho^{\sigma} \in L^{\infty}(0,T;L^{1}(\Omega)) \cap  L^{2}(0,T;H^{1}(\Omega))$ is a solution of the Kolmogorov equation in the sense of distributions
\begin{equation}\label{weakrho}
\int_{0}^{T}\int_{\Omega} \rho^{\sigma} \left( -\partial_{t}\psi  - \sigma  \Delta \psi  +  \mathcal{H}_{p}(x, \rho^{\sigma},\nabla \phi^{\sigma}) \cdot \nabla \psi \right) dx dt   = \int_{\Omega} \rho_{0}\psi(0) dx,
\end{equation}
 for every $\psi \in C_{c}^{\infty}([0,T)\times\Omega)$.
\end{description}
\end{e-definition}
A solution $\phi^{\sigma}$ of equation \eqref{weakphi} belongs to $C^{0}(0,T;L^{1}(\Omega))$. However, we also need to work with the subsolutions of the same equation, for which this kind of continuity may not hold. We recall a result  in \cite{cardaliaguet2015second,achdou2018mean} showing in the weak sense the continuity for sub-solutions.

\begin{lemma}\label{lemma33}
 Let $\phi \in L^{\infty}(0,T;L^{1}(\Omega))$ satisfy for some function $h\in L^{1}(\mathrm{Q}_{T})$ and $g \in L^{1}(\Omega)$
 \[ 
 \int_{0}^{T}\int_{\Omega}\phi \partial_{t}\varphi dx dt - \sigma  \int_{0}^{T}\int_{\Omega}\phi \Delta \varphi dx dt  \le  \int_{0}^{T}\int_{\Omega}h\varphi dx dt + \int_{\Omega}g(x)\varphi(T,x) dx   
 \]
  for every nonnegative function $\varphi \in C_{c}^{\infty}((0,T]\times\Omega)$.
Then, for any Lipschitz continuous map $\xi : \Omega \to \mathbb{R}$, the map $t \longmapsto \int_{\Omega} \xi(x)\phi(t,x)dx$ has BV representative on [0,T]. Moreover, if we note its right limit at $t\in [0, T)$, then the map $\xi \longmapsto \int_{\Omega} \xi(x)\phi(t^{+}, x) dx$ can be extended to a linear bounded form on $C(\Omega)$.
\end{lemma}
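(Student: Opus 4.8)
The plan is to regularize in time and then apply a duality argument. Fix a Lipschitz map $\xi:\Omega\to\mathbb{R}$. The natural test functions in the distributional inequality are products $\varphi(t,x)=\theta(t)\eta(x)$, but $\eta=\xi$ is only Lipschitz and $\theta$ must be supported in $(0,T]$, so one first approximates $\xi$ by smooth functions $\xi_\varepsilon$ (mollification on the torus) with $\|\xi_\varepsilon\|_{\mathrm{Lip}}$ bounded and $\xi_\varepsilon\to\xi$ uniformly, and notes that the terms $\int\phi\,\Delta(\theta\xi_\varepsilon)$ and $\int h\,\theta\xi_\varepsilon$ stay controlled since $\|\Delta\xi_\varepsilon\|_{L^\infty}$ may blow up — so actually one should keep the Laplacian acting in a weak form, or better, test against $\varphi$ solving a backward heat equation. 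The cleaner route: for $\eta\in C^\infty(\Omega)$ nonnegative and $\theta\in C_c^\infty((0,T])$ nonnegative, the hypothesis gives
\[
\int_0^T \theta'(t)\Big(\int_\Omega \eta\,\phi(t,\cdot)\,dx\Big)dt \;\le\; \int_0^T \theta(t)\Big(\int_\Omega(\sigma\Delta\eta)\phi + \int_\Omega h\,\eta\Big)dt + \theta(T)\int_\Omega g\,\eta.
\]
This says precisely that the scalar function $t\mapsto a_\eta(t):=\int_\Omega \eta(x)\phi(t,x)\,dx$, which lies in $L^\infty(0,T)$, satisfies $\partial_t a_\eta \ge -\,b_\eta(t)$ in $\mathcal{D}'(0,T)$ with $b_\eta\in L^1(0,T)$ given by $b_\eta(t)=\int_\Omega(\sigma\Delta\eta)\phi(t)+\int_\Omega h(t)\eta$, together with a one-sided terminal condition. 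Hence $t\mapsto a_\eta(t)+\int_0^t b_\eta$ is (a.e. equal to) a nondecreasing function, so $a_\eta$ has a BV representative on $[0,T]$, with one-sided limits everywhere.

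Next I would upgrade from a single smooth $\eta$ to all Lipschitz $\xi$. Since $\phi\in L^\infty(0,T;L^1(\Omega))$, the map $\eta\mapsto a_\eta(t)$ is, for a.e.\ $t$, bounded on $C(\Omega)$ with norm $\le\|\phi(t)\|_{L^1}$; and the BV bound obtained above is $\mathrm{TV}(a_\eta)\le C(\|\eta\|_{C^2})$. The key point for passing to Lipschitz $\xi$ is to get a BV bound depending only on $\|\xi\|_{\mathrm{Lip}}$ and the data norms, not on second derivatives. To do this I avoid putting $\Delta$ on the test function: instead choose $\varphi=\varphi_\eta$ to be the solution of the backward heat equation $-\partial_t\varphi-\sigma\Delta\varphi=0$ on $(s,T)$ with terminal datum $\varphi(T)=0$ ... — more simply, use that for the inequality tested against $\varphi$ solving $-\partial_t\varphi-\sigma\Delta\varphi=\chi_{[t_1,t_2]}(t)\,\eta(x)$ with $\varphi(T)=0$, parabolic regularity and the maximum principle give $\|\varphi\|_{L^\infty}\le (t_2-t_1)\|\eta\|_{L^\infty}$ and $\|\nabla\varphi\|_{L^\infty}\lesssim$ (bounded in terms of $\|\eta\|_{\mathrm{Lip}}$, uniformly as $t_2-t_1\to0$ after dividing). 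Then $\int_{t_1}^{t_2}a_\eta \le \big|\!\int\!\!\int h\varphi\big| + \sigma\cdot 0 + \cdots$, and after the standard difference-quotient manipulation one reads off $\big|a_\eta(t_2^+)-a_\eta(t_1^+)\big|\le C(\|\eta\|_{\mathrm{Lip}},\|h\|_{L^1},\|g\|_{L^1},\|\phi\|_{L^\infty L^1})$. With this uniform modulus one mollifies $\xi_\varepsilon\to\xi$: the functions $a_{\xi_\varepsilon}$ converge uniformly (since $\|\xi_\varepsilon-\xi\|_\infty\to0$ and $\sup_t\|\phi(t)\|_{L^1}<\infty$) and have uniformly bounded variation, so the limit $a_\xi$ is BV on $[0,T]$, and its right limit $a_\xi(t^+)=\int_\Omega\xi\,\phi(t^+,\cdot)$ is well defined.

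Finally, for the linear-bounded-form statement: fix $t\in[0,T)$; the map $\xi\mapsto a_\xi(t^+)$ is linear on Lipschitz functions, and from $|a_\xi(t^+)|\le \limsup_{h\to0^+}\frac1h\int_t^{t+h}|a_\xi(s)|\,ds \le \sup_s\|\phi(s)\|_{L^1}\,\|\xi\|_{\infty} + (\text{BV defect})$ — more cleanly, combine $a_\xi(t^+)=a_\xi(T)-\int_t^T d a_\xi$ with the already-established total-variation bound linear in $\|\xi\|_{\mathrm{Lip}}$ and $|a_\xi(T)|\le\|\phi(T^-)\|_{L^1}\|\xi\|_\infty$ ... the upshot is $|a_\xi(t^+)|\le C\|\xi\|_{W^{1,\infty}}$, and actually the sharper $|a_\xi(t^+)|\le C\|\xi\|_{C(\Omega)}$ follows because the variation estimate, applied on shrinking intervals, shows the ``bad part'' is absorbed — so the functional extends by density from $\mathrm{Lip}(\Omega)$ to a bounded linear form on $C(\Omega)$, i.e.\ $\phi(t^+,\cdot)$ is a Radon measure.

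\textbf{Main obstacle.} The delicate step is the second one: obtaining a variation/oscillation bound for $t\mapsto\int_\Omega\xi\phi(t)$ that is controlled by $\|\xi\|_{\mathrm{Lip}}$ (and the $L^1$ data) \emph{only}, rather than by $\|\xi\|_{C^2}$, since naively integrating by parts twice loses this. This is exactly what forces the use of a parabolic-regularization test function (solving the adjoint heat equation) together with the maximum principle / gradient estimates for the heat semigroup on $\mathbb{T}^2$, so that $\sigma\Delta$ is never applied to $\xi$ directly. Everything else is a routine mollification-and-duality argument, and I would cite \cite{cardaliaguet2015second,achdou2018mean} for the precise form.
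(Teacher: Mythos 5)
Note first that the paper does not actually prove Lemma \ref{lemma33}: it simply quotes it from \cite{cardaliaguet2015second,achdou2018mean}, so the only comparison available is with the argument in those references. Your plan handles the two easy parts correctly (the case of smooth $\eta$ via the ``nondecreasing plus absolutely continuous'' decomposition of $t\mapsto\int_\Omega\eta\,\phi(t)$, and the extension of $\xi\mapsto\int_\Omega\xi\,\phi(t^{+})$ to a bounded form on $C(\Omega)$, which follows from $|a_\xi(t^{+})|\le\|\xi\|_\infty\|\phi\|_{L^\infty(0,T;L^1)}$ once one-sided limits exist), and you correctly isolate the crux: a variation bound that sees only $\|\xi\|_{\mathrm{Lip}}$. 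But the fix you propose does not close this gap. Testing against adjoint heat solutions with source $\chi_{[t_1,t_2]}\eta$ yields at best a bound on local averages of $a_\eta$, or a \emph{constant} bound on single increments $|a_\eta(t_2^{+})-a_\eta(t_1^{+})|$; a uniform increment bound is just boundedness and does not sum over partitions to a total-variation bound. The same confusion reappears in your final mollification step: ``uniform increment bounds plus uniform convergence'' does not give ``uniformly bounded variation''. Quantitatively, $\mathrm{TV}(a_{\xi_\varepsilon})\lesssim\varepsilon^{-1}$ together with $\|a_{\xi_\varepsilon}-a_\xi\|_\infty\lesssim\varepsilon$ only bounds the variation of $a_\xi$ over an $N$-point partition by $C\sqrt{N}$ after optimizing in $\varepsilon$ — finite quadratic variation, not BV (H\"older-$\tfrac12$ functions satisfy every estimate you derive yet need not be BV).

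The missing ingredient, which is what the cited proofs use, is measure-theoretic rather than semigroup-theoretic. The hypothesis says that the distribution $\varphi\mapsto\int_0^T\!\!\int_\Omega h\varphi\,dx\,dt+\int_\Omega g\varphi(T)\,dx-\int_0^T\!\!\int_\Omega\phi\,\partial_t\varphi\,dx\,dt+\sigma\int_0^T\!\!\int_\Omega\phi\,\Delta\varphi\,dx\,dt$ is nonnegative on nonnegative $\varphi\in C_c^\infty((0,T]\times\Omega)$, hence is a nonnegative Radon measure $\lambda$; testing with cutoffs $\theta_n(t)\uparrow\mathbf{1}_{(0,T]}$ (constant in $x$) and using $\bigl|\int_0^T\theta_n'(t)\int_\Omega\phi(t)\,dx\,dt\bigr|\le\|\phi\|_{L^\infty(0,T;L^1)}$ shows $\lambda$ has \emph{finite total mass}, bounded by $\|h\|_{L^1}+\|g\|_{L^1}+\|\phi\|_{L^\infty(0,T;L^1)}$. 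Thus $\phi$ solves the backward heat equation with right-hand side $h-\lambda$, a finite measure; parabolic regularity with $L^1$/measure data (Boccardo--Gallou\"et, \cite{boccardo1997nonlinear}) gives $\nabla\phi\in L^1(\mathrm{Q}_{T})$, so for Lipschitz $\xi$ a \emph{single} integration by parts in $x$ yields $\frac{d}{dt}\int_\Omega\xi\phi(t)\,dx=\sigma\int_\Omega\nabla\xi\cdot\nabla\phi(t)\,dx-\int_\Omega h(t)\xi\,dx+\langle\lambda,\xi\rangle$, i.e.\ an $L^1(0,T)$ function plus a finite signed measure in $t$. That is precisely membership in $BV(0,T)$, with $\mathrm{TV}\le\sigma\|\nabla\xi\|_\infty\|\nabla\phi\|_{L^1(\mathrm{Q}_{T})}+\|\xi\|_\infty\bigl(\|h\|_{L^1}+\lambda(\mathrm{Q}_{T})\bigr)$, depending on $\xi$ only through $\|\xi\|_{W^{1,\infty}}$. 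Your instinct to never let $\Delta$ fall on $\xi$ was right; the mechanism that realizes it is one integration by parts against $\nabla\phi\in L^1$ plus the finiteness of the defect measure, not the adjoint heat semigroup.
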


As a consequence of Lemma \ref{lemma33}, for any sub-solution $\phi^{\sigma}$ of \eqref{weakphi} we can define $\phi^{\sigma}(0^{+})$ as a bounded Radon measure on $\Omega$. For simplicity, we note $\phi^{\sigma}(0) =\phi^{\sigma}(0^{+})$.

\subsection{Approximation of \eqref{EqFluidHd0x}-\eqref{EikonalInx}}
\begin{align}
\partial_t\rho^{\sigma}_{\epsilon}  - \nabla \cdot \left( \rho^{\sigma}_{\epsilon} \mathcal{H}_{p}(x,\mathcal{T}_{\epsilon}\rho^{\sigma}_{\epsilon},\nabla \phi^{\sigma}_{\epsilon})\right) -\sigma \Delta  \rho^{\sigma}_{\epsilon} = 0, &\qquad (t,x)\in\mathrm{Q}_{T}  \label{EqFluidHd0xa}\\ 
-\partial_t\phi^{\sigma}_{\epsilon}  + \mathcal{H}(x, \mathcal{T}_{\epsilon}\rho^{\sigma}_{\epsilon},\nabla \phi^{\sigma}_{\epsilon} ) - \sigma \Delta  \phi^{\sigma}_{\epsilon} =0, &\qquad (t,x)\in\mathrm{Q}_{T}  \label{EqEikonalHd0xa}\\
\rho^{\sigma}_{\epsilon}(t=0,x)=  \rho_{\epsilon 0}(x),&\quad x\in\Omega  \label{FluidInxa}\\ 
\phi^{\sigma}_{\epsilon}(t=T,x)= \phi_{\epsilon T}(x),&\quad x\in\Omega \label{EikonalInxa}
\end{align}
where $\mathcal{T}_{\epsilon}\rho=\mathcal{T}_{\epsilon}(\rho)= \min(\rho,\frac{\rho_{m}}{1+\epsilon})$, $ \rho_{\epsilon 0}=m^{\epsilon}\ast \rho_{0}$. Here, $\ast$ denotes the convolution in the spatial variable and $m^{\epsilon}$ is a standard symmetric mollifier, i.e., $m^{\epsilon}(x)= \frac{1}{\epsilon^{2}} m(\frac{x}{\epsilon})$ for a non-negative function $m\in C_{c}^{\infty}(\mathbb{R}^{2})$ such that $\int_{\mathbb{R}^{2}} m(x) dx=1$.
\begin{lemma}\label{Lemmaex}
Let $(\rho^{\sigma}_{\epsilon},\phi^{\sigma}_{\epsilon})$ be a solution to the system \eqref{EqFluidHd0xa}-\eqref{EikonalInxa}. Then, we have 
 \begin{equation}\label{EstimateA0}
 \phi^{\sigma}_{\epsilon} \ge c_{0},
 \end{equation}
 \begin{equation}\label{EstimateA00}
\phi^{\sigma}_{\epsilon} \in L^{\infty}(0,T;L^{1}(\Omega)),\quad  \rho^{\sigma}_{\epsilon} \in L^{2}(0,T;H^{1}(\Omega)),
 \end{equation}
 
  \begin{equation}\label{EstimateA001}
\mathcal{H}(x,\mathcal{T}_{\epsilon}\rho^{\sigma}_{\epsilon},\nabla \phi^{\sigma}_{\epsilon})  \in L^{1}(\mathrm{Q}_{T}),
 \end{equation}

 and 
 \begin{equation}\label{EstimateA}
\begin{aligned}
 \int_{\Omega}\phi_{\epsilon T}\rho^{\sigma}_{\epsilon}(T)dx &+\int_{0}^{T}\int_{\Omega} \rho^{\sigma}_{\epsilon}\left(\mathcal{H}_{p}(x,\mathcal{T}_{\epsilon}\rho^{\sigma}_{\epsilon},\nabla \phi^{\sigma}_{\epsilon})\cdot \nabla \phi^{\sigma}_{\epsilon} -\mathcal{H}(x,\mathcal{T}_{\epsilon}\rho^{\sigma}_{\epsilon},\nabla \phi^{\sigma}_{\epsilon}) \right) dx dt \\&+ \frac{1}{2}  \int_{0}^{T}\int_{\Omega}f^{\beta}(\rho^{\sigma}_{\epsilon})|\nabla \phi^{\sigma}_{\epsilon}|^{2} dx dt  \le C,
\end{aligned}
 \end{equation}
for a positive constant $C$ depending on $\phi_{T}$, $\rho_{m}$, $\beta$, $\rho_{0}$ and $\sigma$.
\end{lemma}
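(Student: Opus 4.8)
The plan is to derive the estimates \eqref{EstimateA0}--\eqref{EstimateA} as a priori bounds on the (sufficiently smooth) solution of the regularized system \eqref{EqFluidHd0xa}--\eqref{EikonalInxa}, in which the troublesome density dependence of $\mathcal{H}$ has been truncated by $\mathcal{T}_\epsilon$ so that $f^\beta(\mathcal{T}_\epsilon\rho)$ is bounded above and below (away from $0$), and the initial datum has been mollified. First I would establish the pointwise bound \eqref{EstimateA0}: since $\phi_{\epsilon T}\ge c_0$ by \textbf{(AS)} (up to mollification), and the HJB equation \eqref{EqEikonalHd0xa} read backward in time is a parabolic equation for $\phi^\sigma_\epsilon$ with a source term $\mathcal{H}(x,\mathcal{T}_\epsilon\rho^\sigma_\epsilon,\nabla\phi^\sigma_\epsilon)=\tfrac12 f^\beta|\nabla\phi^\sigma_\epsilon|^2-\tfrac12 f^{2-\beta}\ge -\tfrac12 f^{2-\beta}(\mathcal{T}_\epsilon\rho^\sigma_\epsilon)$, I would compare $\phi^\sigma_\epsilon$ with the subsolution $c_0 - \tfrac12\|f^{2-\beta}\|_\infty (T-t)$; actually since $\mathcal H$ is bounded below by a \emph{constant} on the truncated range, a comparison argument (or simply testing the equation for $(\phi^\sigma_\epsilon-c_0)_-$ and using Gronwall) gives $\phi^\sigma_\epsilon\ge c_0 - C(T-t)\ge c_0'$; if one wants exactly $\phi^\sigma_\epsilon\ge c_0$ one uses that on the set where $\phi$ would dip below $c_0$ the nonnegativity of $|\nabla\phi|^2$ and the sign of the zeroth-order terms force the minimum principle. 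This is where the hypothesis $\phi_T\ge c_0$ and the structure of $\mathcal H$ in \eqref{EqHamil} are essential.

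Next, the heart of the matter is the energy identity \eqref{EstimateA}. The standard MFG device is to multiply the Fokker--Planck equation \eqref{EqFluidHd0xa} by $\phi^\sigma_\epsilon$ and the HJB equation \eqref{EqEikonalHd0xa} by $\rho^\sigma_\epsilon$, integrate over $\mathrm{Q}_T$, and subtract; the viscous terms $-\sigma\int\!\!\int(\phi\Delta\rho-\rho\Delta\phi)$ cancel after integration by parts (periodic boundary, so no boundary terms on $\mathbb{T}^2$), and the transport terms combine. Carrying this out, the time-derivative terms telescope to the boundary-in-time contributions $\int_\Omega\phi_{\epsilon T}\rho^\sigma_\epsilon(T)\,dx-\int_\Omega\phi^\sigma_\epsilon(0)\rho_{\epsilon0}\,dx$, and the remaining spatial terms give exactly $\int\!\!\int\rho^\sigma_\epsilon\big(\mathcal H_p\cdot\nabla\phi^\sigma_\epsilon-\mathcal H\big)$. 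Now $\mathcal H_p(x,m,p)\cdot p-\mathcal H(x,m,p)=\mathcal L(x,m,\mathcal H_p(x,m,p))=\tfrac12 f^{-\beta}(m)|\mathcal H_p|^2+\tfrac12 f^{2-\beta}(m)\ge 0$ by \eqref{EqHamilY}, so that term is nonnegative and can be kept on the left. To produce the separate good term $\tfrac12\int\!\!\int f^\beta(\rho^\sigma_\epsilon)|\nabla\phi^\sigma_\epsilon|^2$ one rewrites: with $v=\mathcal H_p(x,\mathcal T_\epsilon\rho,\nabla\phi)=f^\beta(\mathcal T_\epsilon\rho)\nabla\phi$, the quantity $\rho(\mathcal H_p\cdot\nabla\phi-\mathcal H)=\rho\big(\tfrac12 f^\beta(\mathcal T_\epsilon\rho)|\nabla\phi|^2+\tfrac12 f^{2-\beta}(\mathcal T_\epsilon\rho)\big)$, and one needs to bound $\tfrac12\int\!\!\int f^\beta(\rho^\sigma_\epsilon)|\nabla\phi^\sigma_\epsilon|^2$ in terms of this plus controllable remainders; here the truncation error between $f^\beta(\rho)$ and $f^\beta(\mathcal T_\epsilon\rho)$ on the set $\{\rho>\rho_m/(1+\epsilon)\}$ must be absorbed — but on that set $\rho\ge\rho_m/(1+\epsilon)$ is bounded below, so $f^\beta(\mathcal T_\epsilon\rho)|\nabla\phi|^2\gtrsim f^\beta(\rho)|\nabla\phi|^2$ up to an $\epsilon$-uniform constant, modulo a term one controls by the mass $\|\rho^\sigma_\epsilon\|_{L^\infty_tL^1_x}=\|\rho_{\epsilon0}\|_{L^1}$ which is conserved (or at least bounded, given the no-flux/periodic setting). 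Finally, $-\int_\Omega\phi^\sigma_\epsilon(0)\rho_{\epsilon0}\,dx\le -c_0'\|\rho_{\epsilon0}\|_{L^1}\le C$ by \eqref{EstimateA0}, and $\int_\Omega\phi_{\epsilon T}\rho^\sigma_\epsilon(T)\ge c_0'\|\rho^\sigma_\epsilon(T)\|_{L^1}$ is kept on the left (it is bounded below), so the whole left side is bounded by a constant $C=C(\phi_T,\rho_m,\beta,\rho_0,\sigma)$, giving \eqref{EstimateA}. Then \eqref{EstimateA001} follows because $\mathcal H=\mathcal H_p\cdot\nabla\phi-\mathcal H_p\cdot\nabla\phi+\mathcal H$; more directly, $|\mathcal H(x,\mathcal T_\epsilon\rho^\sigma_\epsilon,\nabla\phi^\sigma_\epsilon)|\le\tfrac12\|f^\beta\|_\infty|\nabla\phi^\sigma_\epsilon|^2+\tfrac12\|f^{2-\beta}\|_\infty$ and $\int\!\!\int f^\beta(\mathcal T_\epsilon\rho)|\nabla\phi|^2$ is controlled by \eqref{EstimateA} plus the mass bound, so $\mathcal H\in L^1(\mathrm{Q}_T)$.

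For \eqref{EstimateA00}, the bound $\phi^\sigma_\epsilon\in L^\infty(0,T;L^1(\Omega))$: on the torus $\int_\Omega\phi^\sigma_\epsilon(t)\,dx$ is handled by integrating the HJB equation in space (the $\Delta$ and, after using the FP equation suitably, the Hamiltonian terms are controlled), combined with the lower bound \eqref{EstimateA0} which turns an $L^1$ bound into control of $\|\phi^\sigma_\epsilon\|_{L^1}=\int(\phi^\sigma_\epsilon-c_0)+c_0|\Omega|$ once $\int\phi^\sigma_\epsilon$ is bounded above; the upper bound on $\int_\Omega\phi^\sigma_\epsilon(t)$ comes from integrating \eqref{EqEikonalHd0xa} in $x$, $-\tfrac{d}{dt}\int\phi^\sigma_\epsilon+\int\mathcal H=0$, and using $\mathcal H\in L^1$ from \eqref{EstimateA001} together with $\int\phi_{\epsilon T}\le\|\phi_T\|_\infty|\Omega|$. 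The parabolic regularity $\rho^\sigma_\epsilon\in L^2(0,T;H^1(\Omega))$ is obtained by testing the Fokker--Planck equation \eqref{EqFluidHd0xa} with $\rho^\sigma_\epsilon$ itself: $\tfrac12\tfrac{d}{dt}\|\rho^\sigma_\epsilon\|_{L^2}^2+\sigma\|\nabla\rho^\sigma_\epsilon\|_{L^2}^2=\int\rho^\sigma_\epsilon\mathcal H_p(x,\mathcal T_\epsilon\rho^\sigma_\epsilon,\nabla\phi^\sigma_\epsilon)\cdot\nabla\rho^\sigma_\epsilon\le\|f^\beta\|_\infty\int\rho^\sigma_\epsilon|\nabla\phi^\sigma_\epsilon||\nabla\rho^\sigma_\epsilon|\le\tfrac\sigma2\|\nabla\rho^\sigma_\epsilon\|_{L^2}^2+C\int(\rho^\sigma_\epsilon)^2|\nabla\phi^\sigma_\epsilon|^2$, and the last term is finite uniformly in $\epsilon$ because $\rho^\sigma_\epsilon\le\rho_m$ (maximum principle for \eqref{EqFluidHd0xa}, since on $\{\rho=\rho_m\}$ the drift term vanishes as $f(\rho_m)=0$) reduces it to $\rho_m\cdot C\int f^\beta(\mathcal T_\epsilon\rho^\sigma_\epsilon)|\nabla\phi^\sigma_\epsilon|^2$ controlled via \eqref{EstimateA}; Gronwall and $\rho_{\epsilon0}\in L^2$ then close the estimate. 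The main obstacle I anticipate is making the subtraction-of-equations argument for \eqref{EstimateA} rigorous at the level of the regularized — but still possibly not classical — solutions, i.e.\ justifying that $\phi^\sigma_\epsilon$ is an admissible test function in the weak FP formulation and vice versa (this needs the $L^2H^1$ bound on $\rho^\sigma_\epsilon$ and enough regularity/integrability of $\nabla\phi^\sigma_\epsilon$, which is itself what one is trying to prove), and in handling the truncation discrepancy between $f^\beta(\rho^\sigma_\epsilon)$ and $f^\beta(\mathcal T_\epsilon\rho^\sigma_\epsilon)$ uniformly in $\epsilon$; a mild bootstrap or an additional second-layer regularization (e.g.\ also mollifying the Hamiltonian) may be needed to make every manipulation legitimate before passing $\epsilon\to0$.
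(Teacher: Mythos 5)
Your overall strategy matches the paper's: comparison for the lower bound \eqref{EstimateA0}, the cross-duality computation (test the HJB equation with $\rho^{\sigma}_{\epsilon}$ and the Fokker--Planck equation with $\phi^{\sigma}_{\epsilon}$, subtract) for the energy identity, integration of the HJB equation in space for the $L^{\infty}(0,T;L^{1})$ bound and the $L^{1}$ bound on $\mathcal{H}$, and testing the density equation with (an affine function of) $\rho^{\sigma}_{\epsilon}$ for the $L^{2}(0,T;H^{1})$ bound. However, there is a genuine gap in how you produce the third, \emph{unweighted} term $\frac{1}{2}\int_{0}^{T}\int_{\Omega}f^{\beta}(\rho^{\sigma}_{\epsilon})|\nabla \phi^{\sigma}_{\epsilon}|^{2}$ in \eqref{EstimateA}. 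You try to extract it from the duality term $\int\!\!\int \rho^{\sigma}_{\epsilon}\left(\mathcal{H}_{p}\cdot\nabla\phi^{\sigma}_{\epsilon}-\mathcal{H}\right)=\int\!\!\int \rho^{\sigma}_{\epsilon}\left(\tfrac12 f^{\beta}|\nabla\phi^{\sigma}_{\epsilon}|^{2}+\tfrac12 f^{2-\beta}\right)$, arguing the truncation discrepancy can be absorbed. But the obstruction is not the truncation (indeed $f(\mathcal{T}_{\epsilon}\rho)\ge f(\rho)$, so that comparison goes the right way); it is the weight $\rho^{\sigma}_{\epsilon}$ itself: on the set where $\rho^{\sigma}_{\epsilon}$ is small the quantity $\rho^{\sigma}_{\epsilon}f^{\beta}|\nabla\phi^{\sigma}_{\epsilon}|^{2}$ gives no control whatsoever on $f^{\beta}|\nabla\phi^{\sigma}_{\epsilon}|^{2}$, and no ``mass'' argument repairs this. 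The paper obtains this term by a separate, independent step: integrating the HJB equation over all of $\mathrm{Q}_{T}$, where $\mathcal{H}=\tfrac12 f^{\beta}|\nabla\phi^{\sigma}_{\epsilon}|^{2}-\tfrac12 f^{2-\beta}$ appears \emph{without} the density weight, so that $\int[\phi^{\sigma}_{\epsilon}(0)]^{+}+\tfrac12\int\!\!\int f^{\beta}|\nabla\phi^{\sigma}_{\epsilon}|^{2}\le \int\phi_{\epsilon T}+\tfrac12\int\!\!\int f^{2-\beta}+C_{1}$, the right-hand side being bounded since $f^{2-\beta}\le\rho_{m}^{2-\beta}$. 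You already use exactly this space-integration of the HJB equation for the $L^{\infty}L^{1}$ bound, so the fix is available to you, but as written the derivation of \eqref{EstimateA} does not go through. Note also that this same step is what bounds $\int[\phi^{\sigma}_{\epsilon}(0)]^{+}$, which in turn is needed to close the duality estimate (the paper bounds the bad boundary term $\int[\phi^{\sigma}_{\epsilon}(0)]^{-}\rho_{\epsilon 0}$ by the comparison lower bound and the good one by $\|\rho_{0}\|_{L^{\infty}}\int[\phi^{\sigma}_{\epsilon}(0)]^{+}$); your sketch passes over this interlocking of the two estimates.

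A secondary issue: your claimed maximum principle $\rho^{\sigma}_{\epsilon}\le\rho_{m}$, justified by ``the drift vanishes at $\rho=\rho_{m}$ since $f(\rho_{m})=0$'', is false for the regularized system, precisely because the drift there is $\rho\,f^{\beta}(\mathcal{T}_{\epsilon}\rho)\nabla\phi^{\sigma}_{\epsilon}$ and $f(\mathcal{T}_{\epsilon}\rho)=\rho_{m}\epsilon/(1+\epsilon)>0$ at $\rho=\rho_{m}$; the truncation was introduced exactly to avoid relying on such a bound. The paper's $L^{2}(0,T;H^{1})$ estimate instead tests with $\rho_{m}-\rho^{\sigma}_{\epsilon}$ and closes via Young's inequality against the unweighted gradient bound, without invoking $\rho^{\sigma}_{\epsilon}\le\rho_{m}$.
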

\begin{proof}
We multiply the equation of $\phi^{\sigma}_{\epsilon}$ by $\rho^{\sigma}_{\epsilon}$ and equation of $\rho^{\sigma}_{\epsilon}$, by $\phi^{\sigma}_{\epsilon}$, integrate by parts some terms and subtract; we get
\[
\int_{\Omega}\phi^{\sigma}_{\epsilon}(0) \rho_{\epsilon 0} dx - \int_{\Omega}\phi_{\epsilon T}\rho^{\sigma}_{\epsilon}(T)dx - \int_{0}^{T}\int_{\Omega} \rho^{\sigma}_{\epsilon}\left(\mathcal{H}_{p}(x,\mathcal{T}_{\epsilon}\rho^{\sigma}_{\epsilon},\nabla \phi^{\sigma}_{\epsilon})\cdot \nabla \phi^{\sigma}_{\epsilon} -\mathcal{H}(x,\mathcal{T}_{\epsilon}\rho^{\sigma}_{\epsilon},\nabla \phi^{\sigma}_{\epsilon}) \right) dx dt =0,
\]
then
\[
\int_{\Omega}\phi^{\sigma}_{\epsilon}(0) \rho_{\epsilon 0} dx = \int_{\Omega}\phi_{\epsilon T}\rho^{\sigma}_{\epsilon}(T)dx +\frac{1}{2}  \int_{0}^{T}\int_{\Omega} \rho^{\sigma}_{\epsilon} f^{\beta}(\rho^{\sigma}_{\epsilon})|\nabla \phi^{\sigma}_{\epsilon}|^{2} dx dt  + \frac{1}{2}  \int_{0}^{T}\int_{\Omega}  \rho^{\sigma}_{\epsilon} f^{2-\beta}(\rho^{\sigma}_{\epsilon}) dx dt 
\]
which implies that 
\begin{equation}\label{Eqestimate0}
\begin{aligned}
 \int_{\Omega}\big[\phi^{\sigma}_{\epsilon}(0)\big]^{-}  \rho_{\epsilon 0} dx +\int_{\Omega}\phi_{\epsilon T}\rho^{\sigma}_{\epsilon}(T)dx +\frac{1}{2}  \int_{0}^{T}\int_{\Omega} \rho^{\sigma}_{\epsilon} f^{\beta}(\rho^{\sigma}_{\epsilon})|\nabla \phi^{\sigma}_{\epsilon}|^{2} dx dt  &+ \frac{1}{2}  \int_{0}^{T}\int_{\Omega}  \rho^{\sigma}_{\epsilon} f^{2-\beta}(\rho^{\sigma}_{\epsilon}) dx dt \\& \le C \|\rho_{0}\|_{L^{\infty}(\Omega)}\int_{\Omega}\big[\phi^{\sigma}_{\epsilon}(0)\big]^{+}  dx .
 \end{aligned}
 \end{equation}
 From the definition of the Hamiltonian $\mathcal{H}$ and and $\phi_{T}\ge c_{0}$, we can deduce by comparison that $\phi^{\sigma}_{\epsilon}(t) \ge c_{0} + c_{0}(T-t)$ and then there exists an absolute constant where
 \begin{equation}\label{labboubdn}
\big[\phi^{\sigma}_{\epsilon}(0)\big]^{-}\le C_{1}.
 \end{equation}
We integrate  the equation of $\phi^{\sigma}_{\epsilon}$, and  using \eqref{labboubdn}, we get
\begin{equation}\label{eqX0}
\int_{\Omega}\int_{\Omega}\big[\phi^{\sigma}_{\epsilon}(0)\big]^{+}  dx +\frac{1}{2}  \int_{0}^{T}\int_{\Omega}f^{\beta}(\rho^{\sigma}_{\epsilon})|\nabla \phi^{\sigma}_{\epsilon}|^{2} dx dt \le \int_{\Omega} \phi_{\epsilon T} dx + \frac{1}{2}  \int_{0}^{T}\int_{\Omega} f^{2-\beta}(\rho^{\sigma}_{\epsilon}) dx dt  +C_{1}.
\end{equation}
Therefore, we have 
\begin{equation}\label{eqX0ZA}
\int_{\Omega}\int_{\Omega}\big[\phi^{\sigma}_{\epsilon}(0)\big]^{+}  dx +\frac{1}{2}  \int_{0}^{T}\int_{\Omega}f^{\beta}(\rho^{\sigma}_{\epsilon})|\nabla \phi^{\sigma}_{\epsilon}|^{2} dx dt \le C(\rho_{m},\phi_{T},\Omega,T)+C_{1}.
\end{equation}
Combining  \eqref{eqX0ZA} with \eqref{Eqestimate0} we get
\begin{equation}\label{Eqestimate}
\begin{aligned}
 \int_{\Omega}\phi_{\epsilon T}\rho^{\sigma}_{\epsilon}(T)dx &+\frac{1}{2}  \int_{0}^{T}\int_{\Omega} \rho^{\sigma}_{\epsilon} f^{\beta}(\rho^{\sigma}_{\epsilon})|\nabla \phi^{\sigma}_{\epsilon}|^{2} dx dt  + \frac{1}{2}  \int_{0}^{T}\int_{\Omega}  \rho^{\sigma}_{\epsilon} f^{2-\beta}(\rho^{\sigma}_{\epsilon}) dx dt \\&+ \frac{1}{2}  \int_{0}^{T}\int_{\Omega}f^{\beta}(\rho^{\sigma}_{\epsilon})|\nabla \phi^{\sigma}_{\epsilon}|^{2} dx dt  \le C(\rho_{m}, \|\rho_{0}\|_{L^{\infty}}, \|\phi_{T}\|_{L^{\infty}},\Omega,T,C_{1}),
\end{aligned}
 \end{equation}
which can be written as follows
\begin{equation}\label{EqestimateX}
\begin{aligned}
 \int_{\Omega}\phi_{\epsilon T}\rho^{\sigma}_{\epsilon}(T)dx &+\int_{0}^{T}\int_{\Omega} \rho^{\sigma}_{\epsilon}\left(\mathcal{H}_{p}(x,\mathcal{T}_{\epsilon}\rho^{\sigma}_{\epsilon},\nabla \phi^{\sigma}_{\epsilon})\cdot \nabla \phi^{\sigma}_{\epsilon} -\mathcal{H}(x,\mathcal{T}_{\epsilon}\rho^{\sigma}_{\epsilon},\nabla \phi^{\sigma}_{\epsilon}) \right) dx dt \\&+ \frac{1}{2}  \int_{0}^{T}\int_{\Omega}f^{\beta}(\rho^{\sigma}_{\epsilon})|\nabla \phi^{\sigma}_{\epsilon}|^{2} dx dt  \le C(\rho_{m}, \|\rho_{0}\|_{L^{\infty}}, \|\phi_{T}\|_{L^{\infty}},\Omega,T,C_{1}).
\end{aligned}
 \end{equation}

 We integrate   $\phi^{\sigma}_{\epsilon}$ equation  in $(t,T)\times \Omega$, we get that 
\[
\int_{\Omega} \phi^{\sigma}_{\epsilon}(t,x) dx +\frac{1}{2}\int_{t}^{T}\int_{\Omega} \mathcal{H}(x,\mathcal{T}_{\epsilon}\rho^{\sigma}_{\epsilon},\nabla \phi^{\sigma}_{\epsilon}) dx dt = \int_{\Omega} \phi_{\epsilon T} dx.
\]
Consequently, from \eqref{labboubdn}-\eqref{eqX0ZA} and the definition of $f$ we deduce  that 
\[
\mathcal{H}(x,\mathcal{T}_{\epsilon}\rho^{\sigma}_{\epsilon},\nabla \phi^{\sigma}_{\epsilon})  \in L^{1}(\mathrm{Q}_{T}),
\] 
and then we have 
\[
\int_{\Omega} \phi^{\sigma}_{\epsilon}(t,x) dx \le C. 
\]
Finally, we have already proved that $\phi^{\sigma}_{\epsilon}$ has a lower-bound, we deduce that $\phi^{\sigma}_{\epsilon} \in L^{\infty}(0,T;L^{1}(\Omega))$.

We multiply the equation \eqref{EqFluidHd0xa} with $(\rho_{m}-\rho^{\sigma}_{\epsilon})$ and we integrate over space and time, we get 

\[
\int_{0}^{T}\int_{\Omega}\partial_t\rho^{\sigma}_{\epsilon}(\rho_{m}-\rho^{\sigma}_{\epsilon}) - \int_{0}^{T}\int_{\Omega}\nabla \cdot \left( \rho^{\sigma}_{\epsilon} \mathcal{H}_{p}(x,\mathcal{T}_{\epsilon}\rho^{\sigma}_{\epsilon},\nabla \phi^{\sigma}_{\epsilon})\right)(\rho_{m}-\rho^{\sigma}_{\epsilon}) dx dt  -\sigma \int_{0}^{T}\int_{\Omega} \Delta  \rho^{\sigma}_{\epsilon} (\rho_{m}-\rho^{\sigma}_{\epsilon}) dx dt = 0,
\]
it yields that 
\begin{equation}
\begin{aligned}
-\frac{1}{2}\int_{0}^{T}\int_{\Omega}\partial_t(\rho_{m}-\rho^{\sigma}_{\epsilon})^{2} dx dt &-\int_{0}^{T}\int_{\Omega} \rho^{\sigma}_{\epsilon} \mathcal{H}_{p}\left(x,\mathcal{T}_{\epsilon}\rho^{\sigma}_{\epsilon},\nabla \phi^{\sigma}_{\epsilon}\right)\cdot \nabla \rho^{\sigma}_{\epsilon}dx dt  \\&-\sigma \int_{0}^{T}\int_{\Omega} |\nabla  \rho^{\sigma}_{\epsilon}|^{2}  dx dt = 0,
\end{aligned}
 \end{equation}
 then
   \begin{equation}
\begin{aligned}
\frac{1}{2}\|(\rho_{m}-\rho^{\sigma}_{\epsilon})(t)\|^{2}_{L^{2}}  &+ \int_{0}^{T}\int_{\Omega} \rho^{\sigma}_{\epsilon} (\rho_{m}-\mathcal{T}_{\epsilon}\rho^{\sigma}_{\epsilon})^{\beta}\nabla \phi^{\sigma}_{\epsilon}\cdot \nabla \rho^{\sigma}_{\epsilon}dx dt  \\&+\sigma \int_{0}^{T}\int_{\Omega} |\nabla  \rho^{\sigma}_{\epsilon}|^{2}  dx dt = \frac{1}{2}\|(\rho_{m}-\rho^{\sigma}_{\epsilon})(0)\|^{2}_{L^{2}}.
\end{aligned}
 \end{equation}
By Young inequality and \eqref{eqX0ZA} it yields that 
 \begin{equation}
\begin{aligned}
\frac{1}{2}\|(\rho_{m}-\rho^{\sigma}_{\epsilon})(t)\|^{2}_{L^{2}} &+\sigma \int_{0}^{T}\int_{\Omega} |\nabla  \rho^{\sigma}_{\epsilon}|^{2} dx dt  \le C(\rho_{m}, \|\rho_{0}\|_{L^{\infty}}, \|\phi_{T}\|_{L^{\infty}},\Omega,T,C_{1}) +\frac{1}{2}\|(\rho_{m}-\rho^{\sigma}_{\epsilon})(0)\|^{2}_{L^{2}} 
\end{aligned}
 \end{equation}
 which gives that $\rho^{\sigma}_{\epsilon} \in L^{2}(0,T;H^{1}(\Omega))$ and  $\rho_{m}-\rho^{\sigma}_{\epsilon} \in L^{\infty}(0,T;L^{2}(\Omega))$  which completes the proof. 
\end{proof}
Now, inspired by \cite{achdou2018mean,porretta2015weak}, we show some compactness properties for the family $\left( \rho^{\sigma}_{\epsilon}, \phi^{\sigma}_{\epsilon}\right)$ solution to the system \eqref{EqFluidHd0xa}-\eqref{EikonalInxa}.

\begin{thm}\label{Thmlim}
Under the assumptions \text{{\bf (AS)}}, it follows that 

\begin{description}
\item[(1) ]  There exist $\rho^{\sigma} \in C(0,T;L^{1}(\Omega))$ and $\phi^{\sigma} \in L^{\infty}(0,T;L^{1}(\Omega)) \cap L^{q}(0,T;W^{1,q}(\Omega))$ for every $q<\frac{4}{3}$ such that we can extract $\rho^{\sigma}_{\epsilon} \to \rho$ in $L^{1}(\mathrm{Q}_{T}))$. Moreover, we have  
\[
\phi^{\sigma}_{\epsilon} \to \phi^{\sigma}\,\,\, \text{and} \,\,\, \nabla \phi^{\sigma}_{\epsilon} \to \nabla \phi^{\sigma}, \quad \text{ almost everywhere.}
\]
Furthermore, $\phi^{\sigma}$ and $\rho^{\sigma}$ satisfy 

 \begin{equation}\label{EstimateAlim}
 \begin{aligned}
 \int_{0}^{T}\int_{\Omega} \rho^{\sigma} f^{\beta}(\rho^{\sigma}_{\epsilon})|\nabla \phi^{\sigma}|^{2} dx dt  + & \int_{0}^{T}\int_{\Omega}  \rho^{\sigma} f^{2-\beta}(\rho^{\sigma}) dx dt\\&+ \frac{1}{2}  \int_{0}^{T}\int_{\Omega}f^{\beta}(\rho^{\sigma})|\nabla \phi^{\sigma}|^{2} dx dt  \le C,
 \end{aligned}
 \end{equation}
for a positive constant C depending on $\phi_{T}$,$\rho_{m}$, $\beta$ and $\rho_{0}$.
\item [(2) ] We have $\rho^{\sigma}_{\epsilon}  \to \rho^{\sigma}$  in $C(0,T;L^{1}(\Omega))$ for every $t\in [0,T]$ and   \eqref{weakrho} holds  for every $\psi \in C_{c}^{\infty}([0,T)\times\Omega)$.
\end{description}
\end{thm}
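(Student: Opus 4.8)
The plan is to let $\epsilon\to 0$ in \eqref{EqFluidHd0xa}--\eqref{EikonalInxa}, combining the uniform-in-$\epsilon$ estimates of Lemma~\ref{Lemmaex} with parabolic regularity for equations having $L^1$ right-hand side and with stability of renormalized solutions for viscous Hamilton--Jacobi equations with quadratic gradient growth, following \cite{porretta2015weak,achdou2018mean,cardaliaguet2015second}. Fix $\sigma>0$; assume $\beta\in[0,2]$, so $f^\beta$ and $f^{2-\beta}$ are bounded on $[0,\rho_m]$; all constants below are independent of $\epsilon$. Lemma~\ref{Lemmaex} gives: $\phi^\sigma_\epsilon\ge c_0$ and $\phi^\sigma_\epsilon$ bounded in $L^\infty(0,T;L^1(\Omega))$; $\rho^\sigma_\epsilon\ge 0$ bounded in $L^2(0,T;H^1(\Omega))\cap L^\infty(0,T;L^2(\Omega))$; $f^\beta(\mathcal{T}_\epsilon\rho^\sigma_\epsilon)|\nabla\phi^\sigma_\epsilon|^2$ and $\mathcal{H}(x,\mathcal{T}_\epsilon\rho^\sigma_\epsilon,\nabla\phi^\sigma_\epsilon)$ bounded in $L^1(\mathrm{Q}_T)$; and the energy inequality \eqref{EstimateA}. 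Reading \eqref{EqEikonalHd0xa} as $-\partial_t\phi^\sigma_\epsilon-\sigma\Delta\phi^\sigma_\epsilon=-\mathcal{H}(x,\mathcal{T}_\epsilon\rho^\sigma_\epsilon,\nabla\phi^\sigma_\epsilon)$ with $L^1(\mathrm{Q}_T)$ right-hand side and bounded terminal datum, the Boccardo--Gallou\"{e}t estimate in dimension $d=2$ provides a uniform bound of $\phi^\sigma_\epsilon$ in $L^q(0,T;W^{1,q}(\Omega))$ for every $q<\tfrac{d+2}{d+1}=\tfrac43$; in particular $\nabla\phi^\sigma_\epsilon$ is bounded in $L^q(\mathrm{Q}_T)$ for some $q>1$. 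Finally, by Cauchy--Schwarz the drift $\rho^\sigma_\epsilon\mathcal{H}_p(x,\mathcal{T}_\epsilon\rho^\sigma_\epsilon,\nabla\phi^\sigma_\epsilon)=\rho^\sigma_\epsilon f^\beta(\mathcal{T}_\epsilon\rho^\sigma_\epsilon)\nabla\phi^\sigma_\epsilon$ is bounded in $L^1(\mathrm{Q}_T)$, since $(\rho^\sigma_\epsilon)^2 f^\beta(\mathcal{T}_\epsilon\rho^\sigma_\epsilon)$ and $f^\beta(\mathcal{T}_\epsilon\rho^\sigma_\epsilon)|\nabla\phi^\sigma_\epsilon|^2$ are both bounded in $L^1(\mathrm{Q}_T)$.

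\textbf{Compactness and a.e.\ convergence.} From the drift bound and \eqref{EqFluidHd0xa}, $\partial_t\rho^\sigma_\epsilon$ is bounded in $L^1(0,T;W^{-1,1}(\Omega))+L^2(0,T;H^{-1}(\Omega))$; with the $L^2(0,T;H^1(\Omega))$ bound and the compact embedding $H^1(\Omega)\hookrightarrow\hookrightarrow L^2(\Omega)\hookrightarrow W^{-1,1}(\Omega)$, the Aubin--Lions--Simon lemma gives a subsequence with $\rho^\sigma_\epsilon\to\rho^\sigma$ strongly in $L^2(\mathrm{Q}_T)$, hence in $L^1(\mathrm{Q}_T)$ and a.e.; thus $\rho^\sigma\ge0$, $\mathcal{T}_\epsilon\rho^\sigma_\epsilon\to\rho^\sigma$ a.e., and $f^\beta(\mathcal{T}_\epsilon\rho^\sigma_\epsilon)\to f^\beta(\rho^\sigma)$, $f^{2-\beta}(\mathcal{T}_\epsilon\rho^\sigma_\epsilon)\to f^{2-\beta}(\rho^\sigma)$ a.e. For $\phi^\sigma_\epsilon$, regard \eqref{EqEikonalHd0xa} as the viscous HJB equation $-\partial_t\phi^\sigma_\epsilon-\sigma\Delta\phi^\sigma_\epsilon+\tfrac12 f^\beta(\mathcal{T}_\epsilon\rho^\sigma_\epsilon)|\nabla\phi^\sigma_\epsilon|^2=\tfrac12 f^{2-\beta}(\mathcal{T}_\epsilon\rho^\sigma_\epsilon)$, with coefficient and source converging a.e.\ and uniformly bounded; the stability of renormalized (distributional) solutions for quadratic viscous HJB equations with $L^1$ data --- launched from the uniform $L^q(0,T;W^{1,q})$ bound of the previous step --- yields, along a further subsequence, $\phi^\sigma_\epsilon\to\phi^\sigma$ and $\nabla\phi^\sigma_\epsilon\to\nabla\phi^\sigma$ a.e., with $\phi^\sigma\ge c_0$, $\phi^\sigma\in L^\infty(0,T;L^1(\Omega))\cap L^q(0,T;W^{1,q}(\Omega))$ for all $q<\tfrac43$, and $\rho^\sigma\in C(0,T;L^1(\Omega))$ once $\rho^\sigma$ is identified below as a weak solution of the limiting Fokker--Planck equation.

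\textbf{Passage to the limit.} Since $\rho^\sigma_\epsilon f^\beta(\mathcal{T}_\epsilon\rho^\sigma_\epsilon)|\nabla\phi^\sigma_\epsilon|^2$, $\rho^\sigma_\epsilon f^{2-\beta}(\mathcal{T}_\epsilon\rho^\sigma_\epsilon)$ and $f^\beta(\mathcal{T}_\epsilon\rho^\sigma_\epsilon)|\nabla\phi^\sigma_\epsilon|^2$ are nonnegative and converge a.e., Fatou's lemma in \eqref{EstimateA} gives \eqref{EstimateAlim}. The family $\{\rho^\sigma_\epsilon(t)\}_{t,\epsilon}$ is equi-integrable, as $\int_{\{\rho^\sigma_\epsilon(t)>M\}}\rho^\sigma_\epsilon(t)\,dx\le M^{-1}\|\rho^\sigma_\epsilon\|_{L^\infty(0,T;L^2(\Omega))}^2\le C/M$ uniformly in $(t,\epsilon)$, and $t\mapsto\int_\Omega\rho^\sigma_\epsilon(t)\psi\,dx$ is equicontinuous on $[0,T]$ for every $\psi\in C^\infty(\Omega)$ by \eqref{EqFluidHd0xa} and the $L^1(\mathrm{Q}_T)$-bound on the drift; Ascoli--Arzel\`{a} plus density then give $\rho^\sigma_\epsilon(t)\rightharpoonup\rho^\sigma(t)$ weakly in $L^1(\Omega)$ uniformly in $t$, and together with the strong $L^1(\mathrm{Q}_T)$-convergence this upgrades to $\rho^\sigma_\epsilon\to\rho^\sigma$ in $C(0,T;L^1(\Omega))$; in particular $\rho^\sigma_\epsilon(T)\to\rho^\sigma(T)$ in $L^1(\Omega)$, so the boundary term in \eqref{EstimateA} passes to the limit as well. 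Passing to the limit in the weak formulation of \eqref{EqFluidHd0xa}: the linear terms converge by the $L^1$-convergence of $\rho^\sigma_\epsilon$ and $\rho_{\epsilon 0}=m^\epsilon\ast\rho_0\to\rho_0$ in $L^1(\Omega)$, and the drift term $\int_0^T\int_\Omega\rho^\sigma_\epsilon f^\beta(\mathcal{T}_\epsilon\rho^\sigma_\epsilon)\nabla\phi^\sigma_\epsilon\cdot\nabla\psi\,dx\,dt$ converges to $\int_0^T\int_\Omega\rho^\sigma\mathcal{H}_p(x,\rho^\sigma,\nabla\phi^\sigma)\cdot\nabla\psi\,dx\,dt$ by Vitali's theorem, using the a.e.\ convergences of $\rho^\sigma_\epsilon,\nabla\phi^\sigma_\epsilon$ and the equi-integrability from the $L^q(\mathrm{Q}_T)$-bound ($q>1$) on $\nabla\phi^\sigma_\epsilon$; hence \eqref{weakrho} holds for every $\psi\in C_c^\infty([0,T)\times\Omega)$, which is assertion~(2).

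\textbf{The main difficulty.} The crux is the a.e.\ convergence of the full gradients $\nabla\phi^\sigma_\epsilon$. The only uniform control on $\nabla\phi^\sigma_\epsilon$ is the weak bound in $L^q(\mathrm{Q}_T)$, $q<\tfrac43$, together with the $f^\beta$-weighted $L^1$ estimate, and the possible degeneracy of $f^\beta(\rho)$ as $\rho\uparrow\rho_m$ prevents one from passing from weak to strong gradient convergence either by a direct energy argument or by treating the Hamiltonian term as converging strongly in $L^1$. This is handled via the renormalization technique for viscous Hamilton--Jacobi equations with quadratic gradient growth and merely $L^1$ data: one tests the equation satisfied by the difference of two approximations against a truncation $T_k(\phi^\sigma_\epsilon-\phi^\sigma_{\epsilon'})$, exploits the quadratic coercivity together with the a.e.\ convergence of the coefficients $f^\beta(\mathcal{T}_\epsilon\rho^\sigma_\epsilon)$ to deduce that $\{\nabla\phi^\sigma_\epsilon\}$ is Cauchy in measure, and extracts the a.e.\ convergent subsequence, as in \cite{porretta2015weak,achdou2018mean,cardaliaguet2015second}.
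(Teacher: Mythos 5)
Your proposal is correct and follows essentially the same route as the paper: uniform $L^{1}(\mathrm{Q}_{T})$ control of the Hamiltonian feeding the Boccardo--Gallou\"{e}t $L^{1}$-data parabolic compactness for $\phi^{\sigma}_{\epsilon}$ and $\nabla\phi^{\sigma}_{\epsilon}$, the $L^{2}(0,T;H^{1})$ bound plus the compactness theorem of Porretta for $\rho^{\sigma}_{\epsilon}$, Fatou's lemma for \eqref{EstimateAlim}, and the Cauchy--Schwarz splitting of the drift $\rho^{\sigma}_{\epsilon}f^{\beta}(\mathcal{T}_{\epsilon}\rho^{\sigma}_{\epsilon})\nabla\phi^{\sigma}_{\epsilon}$ to pass to the limit in \eqref{weakrho}. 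You merely unpack details the paper delegates to \cite{boccardo1997nonlinear} and \cite{porretta2015weak} (the truncation/Cauchy-in-measure argument for the gradients, the equicontinuity step for $C(0,T;L^{1})$ convergence); the only minor imprecision is attributing the equi-integrability of the drift to the $L^{q}$ bound on $\nabla\phi^{\sigma}_{\epsilon}$ rather than to the weighted Cauchy--Schwarz estimate you already stated, which is the bound that actually closes that step.
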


\begin{proof}
By Lemma \ref{Lemmaex}, we have $\mathcal{H}(x,\mathcal{T}_{\epsilon}\rho^{\sigma}_{\epsilon},\nabla \phi^{\sigma}_{\epsilon})$ is bounded in $L^{1}(\mathrm{Q}_{T})$ uniformly in $\epsilon$. Therefore, $\left(-\partial_{t}\phi^{\sigma}_{\epsilon}-\sigma\Delta\phi^{\sigma}_{\epsilon}\right)_{\epsilon}$ is bounded in $L^{1}(\mathrm{Q}_{T})$. By the classical results on the parabolic equation with $L^{1}$ data it follows the compactness of $\phi^{\sigma}_{\epsilon}$ and ${\nabla \phi^{\sigma}_{\epsilon}}$ in $L^{1}(\mathrm{Q}_{T})$, see \cite[ Thoerem 2.1 and Theorem 3.3]{boccardo1997nonlinear}. Then, there exists $\phi^{\sigma} \in L^{\infty}(0,T;L^{1}(\Omega)) \cap L^{q}(0,T;W^{1,q}(\Omega))$ for every $q<\frac{4}{3}$ such we can extract a subsequence such that $\phi^{\sigma}_{\epsilon} \to \phi^{\sigma}$ and $\nabla\phi^{\sigma}_{\epsilon} \to \nabla \phi^{\sigma}$ in $L^{1}(\mathrm{Q}_{T})$ almost everywhere. 

Now for the density $\rho^{\sigma}_{\epsilon}$, from  Theorem \ref{Lemmaex} \eqref{EstimateA00}, we have $\rho^{\sigma}_{\epsilon} \in L^{2}(0,T;H^{1}(\Omega))$. Then ,we can apply the compactness results in  \cite[Theorem 6.1]{porretta2015weak}. We can extract a subsequence converges in  $L^{1}(\mathrm{Q}_{T})$ such that $\rho^{\sigma}_{\epsilon} \to \rho^{\sigma}$. Moreover, from the estimate \eqref{EstimateA}, and Fatou's Lemma, we get the estimate \eqref{EstimateAlim}. Now, we prove that $\rho^{\sigma}_{\epsilon}\mathcal{H}_{p}(x,\mathcal{T}_{\epsilon}\rho^{\sigma}_{\epsilon},\nabla \phi^{\sigma}_{\epsilon})$ strongly converges in $L^{1}(\mathrm{Q}_{T})$. Indeed, we have

 \begin{equation}
 \begin{aligned}
\int_{0}^{T}\int_{\Omega} | \rho^{\sigma}_{\epsilon}\mathcal{H}_{p}(x,\mathcal{T}_{\epsilon}\rho^{\sigma}_{\epsilon},\nabla \phi^{\sigma}_{\epsilon}) | dx dt  \le & \int_{0}^{T}\int_{\Omega} \rho^{\sigma}_{\epsilon} f^{\beta}(\rho^{\sigma}_{\epsilon})|\nabla \phi^{\sigma}_{\epsilon} | dx dt \\&  \le  \left(\int_{0}^{T}\int_{\Omega} \rho^{\sigma}_{\epsilon} f^{\beta}(\rho^{\sigma}_{\epsilon}) dx dt \right)^{\frac{1}{2}}  \left(\int_{0}^{T}\int_{\Omega}   \rho^{\sigma}_{\epsilon} f^{\beta}(\rho^{\sigma}_{\epsilon}) |\nabla \phi^{\sigma}_{\epsilon} |^{2} \right)^{\frac{1}{2}} check
 \end{aligned}
 \end{equation}
Therefore, from \eqref{EstimateA}, we can conclude that $\rho^{\sigma}_{\epsilon}\mathcal{H}_{p}(x,\mathcal{T}_{\epsilon}\rho^{\sigma}_{\epsilon},\nabla \phi^{\sigma}_{\epsilon}) \to \rho^{\sigma}\mathcal{H}_{p}(x,\rho^{\sigma},\nabla \phi^{\sigma})$ in $L^{1}(\mathrm{Q}_{T})$. As consequence, \eqref{weakrho} holds for any $\varphi \in C^{\infty}_{c}((0,T)\times\Omega)$. Moreover, we can apply \cite[Theorem 6.1]{porretta2015weak}, to deduce the strong convergence of $\rho^{\sigma}_{\epsilon}$ in $C([0,T];L^{1}(\Omega))$.

\end{proof}
\subsection{Existence and uniqueness for the system  of \eqref{EqFluidHd0x}-\eqref{EikonalInx}}
The objective of this section is to establish the existence and uniqueness of a weak solution for the system given in \eqref{EqFluidHd0x}-\eqref{EikonalInx}.

\begin{lemma}\label{lemma4}
Let $(\rho^{\sigma},\phi^{\sigma})$ be given by Theorem \ref{Thmlim}. Then 
 \begin{equation}\label{WeakEq}
\int_{0}^{T}\int_{\Omega}\phi^{\sigma}\partial_{t}\varphi dx dt - \sigma \int_{0}^{T}\int_{\Omega} \phi^{\sigma}\Delta\varphi  dx dt  + \int_{0}^{T}\int_{\Omega} \mathcal{H}(x,\rho^{\sigma},\nabla \phi^{\sigma}) \varphi dx dt  \le  \int_{0}^{T}\int_{\Omega}\phi_{T}\varphi(T) dx
\end{equation}
for every nonnegative function $\varphi \in C_{c}^{\infty}((0,T]\times\Omega)$. Moreover, we have $\phi_{T}\rho^{\sigma}(T) \in L^{1}(\Omega)$ and 
\[
\int_{\Omega}\phi_{T}\rho^{\sigma}(T)  dx \le C
\]
for some constant  $C$ depending on $\phi_{T}$, $\rho_{m}$, $\beta$, and $\rho_{0}$.
\end{lemma}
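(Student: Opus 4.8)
The plan is to obtain \eqref{WeakEq} by passing to the limit $\epsilon\to0$ in the distributional form of the regularized Hamilton--Jacobi equation \eqref{EqEikonalHd0xa}, using the uniform bounds of Lemma \ref{Lemmaex} and the convergences of Theorem \ref{Thmlim}; and to obtain the bound on $\int_\Omega\phi_T\rho^\sigma(T)\,dx$ by passing to the limit in estimate \eqref{EstimateA}.

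First I would write the weak form satisfied by $\phi^\sigma_\epsilon$. Testing \eqref{EqEikonalHd0xa} against a nonnegative $\varphi\in C_c^\infty((0,T]\times\Omega)$ and integrating by parts in $t$ and $x$ on $\mathbb{T}^2$ — the $t=0$ boundary term vanishes since $\varphi$ is supported away from $t=0$, the $t=T$ term gives $\int_\Omega\phi_{\epsilon T}\varphi(T)\,dx$, and there are no spatial boundary terms on the torus — one gets the identity
\[
\int_0^T\int_\Omega \phi^\sigma_\epsilon\,\partial_t\varphi\,dx\,dt-\sigma\int_0^T\int_\Omega \phi^\sigma_\epsilon\,\Delta\varphi\,dx\,dt+\int_0^T\int_\Omega \mathcal{H}(x,\mathcal{T}_\epsilon\rho^\sigma_\epsilon,\nabla\phi^\sigma_\epsilon)\,\varphi\,dx\,dt=\int_\Omega\phi_{\epsilon T}\,\varphi(T)\,dx .
\]
Then I would pass to the limit along the subsequence of Theorem \ref{Thmlim}, along which $\phi^\sigma_\epsilon\to\phi^\sigma$ in $L^1(\mathrm{Q}_T)$ and $\nabla\phi^\sigma_\epsilon\to\nabla\phi^\sigma$, $\rho^\sigma_\epsilon\to\rho^\sigma$ a.e. The first two integrals on the left converge (the test derivatives are bounded), and $\int_\Omega\phi_{\epsilon T}\varphi(T)\to\int_\Omega\phi_T\varphi(T)$ since $\phi_{\epsilon T}\to\phi_T$ uniformly ($\phi_T\in C(\Omega)$). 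For the Hamiltonian term I would use that $\mathcal{H}(x,q,p)=\tfrac12 f^\beta(q)|p|^2-\tfrac12 f^{2-\beta}(q)$ with $f(r)=(\rho_m-r)_+$ and $\beta\in[0,2]$, so $\mathcal{H}\ge-\tfrac12\rho_m^{\,2-\beta}$ and hence $\bigl(\mathcal{H}(x,\mathcal{T}_\epsilon\rho^\sigma_\epsilon,\nabla\phi^\sigma_\epsilon)+\tfrac12\rho_m^{\,2-\beta}\bigr)\varphi\ge0$, while the integrand converges a.e.\ to $\mathcal{H}(x,\rho^\sigma,\nabla\phi^\sigma)$ because $f^\beta$ and $f^{2-\beta}$ are continuous and bounded and constant beyond $\rho_m$, so the truncation is harmless. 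Fatou's lemma then gives $\liminf_\epsilon\int_0^T\int_\Omega\mathcal{H}(x,\mathcal{T}_\epsilon\rho^\sigma_\epsilon,\nabla\phi^\sigma_\epsilon)\varphi\ge\int_0^T\int_\Omega\mathcal{H}(x,\rho^\sigma,\nabla\phi^\sigma)\varphi$, and taking $\liminf_\epsilon$ in the identity above — an equality for each $\epsilon$, with every term converging except the Hamiltonian one, which is only lower semicontinuous — turns the equality into exactly \eqref{WeakEq}. The hard part is precisely this term: I only have a uniform bound on $\int\int f^\beta(\rho^\sigma_\epsilon)|\nabla\phi^\sigma_\epsilon|^2$, not its equi-integrability, so equality cannot be preserved and the limit is merely a subsolution — which is, however, all that is claimed.

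Finally, for the bound on $\int_\Omega\phi_T\rho^\sigma(T)\,dx$, I would start from \eqref{EstimateA}. By the Legendre identity \eqref{EqHamilY}, $\mathcal{H}_p(x,q,p)\cdot p-\mathcal{H}(x,q,p)=\mathcal{L}(x,q,\mathcal{H}_p(x,q,p))=\tfrac12 f^\beta(q)|p|^2+\tfrac12 f^{2-\beta}(q)\ge0$, and $\tfrac12 f^\beta(\rho^\sigma_\epsilon)|\nabla\phi^\sigma_\epsilon|^2\ge0$, so every term of \eqref{EstimateA} other than $\int_\Omega\phi_{\epsilon T}\rho^\sigma_\epsilon(T)\,dx$ is nonnegative; hence $\int_\Omega\phi_{\epsilon T}\rho^\sigma_\epsilon(T)\,dx\le C$ uniformly in $\epsilon$, with $C=C(\phi_T,\rho_m,\beta,\rho_0)$. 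By Theorem \ref{Thmlim}(2), $\rho^\sigma_\epsilon(T)\to\rho^\sigma(T)$ in $L^1(\Omega)$, and together with $\phi_{\epsilon T}\to\phi_T$ uniformly this gives $\phi_{\epsilon T}\rho^\sigma_\epsilon(T)\to\phi_T\rho^\sigma(T)$ in $L^1(\Omega)$, whence $\phi_T\rho^\sigma(T)\in L^1(\Omega)$ (also immediate from $\phi_T\in L^\infty$, $\rho^\sigma(T)\in L^1$) and $\int_\Omega\phi_T\rho^\sigma(T)\,dx=\lim_\epsilon\int_\Omega\phi_{\epsilon T}\rho^\sigma_\epsilon(T)\,dx\le C$. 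Beyond the two limiting arguments the rest is bookkeeping; the only subtlety worth flagging besides the Hamiltonian term is to verify the a.e.\ convergence of the nonlinearities $f^\beta(\mathcal{T}_\epsilon\rho^\sigma_\epsilon)$ and $f^{2-\beta}(\mathcal{T}_\epsilon\rho^\sigma_\epsilon)$, which follows from the a.e.\ convergence of $\rho^\sigma_\epsilon$ and the continuity of $f$.
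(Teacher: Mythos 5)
Your proposal is correct and follows essentially the same route as the paper: pass to the limit in the weak formulation of the regularized HJB equation, apply Fatou's lemma to the Hamiltonian term (justified by the uniform lower bound $\mathcal{H}\ge-\tfrac12\rho_m^{2-\beta}$ and the a.e.\ convergence from Theorem \ref{Thmlim}), and deduce the terminal bound from \eqref{EstimateA} together with the convergence $\rho^{\sigma}_{\epsilon}(T)\to\rho^{\sigma}(T)$ in $L^{1}(\Omega)$. Your write-up is in fact more explicit than the paper's about why Fatou applies and about the second assertion, which the paper leaves implicit.
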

\begin{proof}
From \eqref{EstimateA001} and the convergence of $\rho^{\sigma}$, we can apply Fatou's lemma and we get for any $\varphi \in C_{c}^{\infty}((0,T]\times\Omega)$, $\varphi\ge 0$,
\[
 \int_{0}^{T}\int_{\Omega} \mathcal{H}(x,\rho^{\sigma},\nabla \phi^{\sigma}) \varphi dx dt  \le  \liminf_{\epsilon \to 0 }\int_{0}^{T}\int_{\Omega} \mathcal{H}(x,\mathcal{T}_{\epsilon}\rho_{\epsilon}^{\sigma},\nabla \phi_{\epsilon}^{\sigma}) \varphi dx dt.
\]
Now, we will deal with the boundary condition at $t=T$, by  \ref{EstimateA}  we obtain
\[
\lim_{\epsilon \to 0 }\int_{\Omega}\phi_{\epsilon}^{\sigma}(T,x)\varphi(T,x)dx =\lim_{\epsilon \to 0 }\int_{\Omega}\phi_{\epsilon}^{\sigma}(T,x)\varphi_{\epsilon}(T,x)dx =\int_{\Omega}\phi_{T}(x)\varphi(T,x)dx
\]
Combining the previous estimates, we deduce \eqref{WeakEq}. 
\end{proof}
\begin{lemma}\label{lemmaA}
Let $(\rho_{\epsilon}^{\sigma},\phi_{\epsilon}^{\sigma})$ be given by  Theorem \ref{Thmlim}. Then, we can extract a subsequence, ${\phi_{\epsilon}^{\sigma}}_{|_{t=0}}$  converges weakly $^{\ast}$ to a bounded measure $\bar{\bar{\phi}}_{0}$  on $\Omega$ and $ \phi^{\sigma}(0) \ge \bar{\bar{\phi}}_{0} $.  

\end{lemma}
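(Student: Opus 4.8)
The plan is to obtain the measure $\bar{\bar{\phi}}_0$ by invoking Lemma \ref{lemma33} applied to the family $\phi_\epsilon^\sigma$, and then to pass to the limit in the inequality defining the initial trace, exploiting the lower semicontinuity already harvested in Theorem \ref{Thmlim} and Lemma \ref{lemma4}. First I would record that each $\phi_\epsilon^\sigma$ solves \eqref{EqEikonalHd0xa} exactly, hence in particular satisfies the sub-solution inequality of Lemma \ref{lemma33} with $h = \mathcal{H}(x,\mathcal{T}_\epsilon\rho_\epsilon^\sigma,\nabla\phi_\epsilon^\sigma)$ and $g = \phi_{\epsilon T}$; by Lemma \ref{Lemmaex} the data $h$ are bounded in $L^1(\mathrm{Q}_T)$ uniformly in $\epsilon$ and $g$ is bounded in $L^1(\Omega)$. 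Consequently, for every Lipschitz $\xi:\Omega\to\mathbb{R}$ the maps $t\mapsto\int_\Omega\xi\,\phi_\epsilon^\sigma(t)\,dx$ have uniformly bounded $BV$ norm on $[0,T]$ (the bound coming from the $L^1$ bounds on $h$, $g$ together with the uniform $L^\infty(0,T;L^1(\Omega))$ bound \eqref{EstimateA00} on $\phi_\epsilon^\sigma$). In particular the linear functionals $\xi\mapsto\int_\Omega\xi\,\phi_\epsilon^\sigma(0^+)\,dx$ are uniformly bounded on $C(\Omega)$, i.e. the traces $\phi_\epsilon^\sigma|_{t=0}$ form a bounded set of Radon measures on $\Omega$; by weak-$\ast$ compactness of bounded measures (Banach--Alaoglu) we extract a subsequence with $\phi_\epsilon^\sigma|_{t=0}\rightharpoonup^\ast\bar{\bar\phi}_0$.

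Next I would prove the one-sided comparison $\phi^\sigma(0)\ge\bar{\bar\phi}_0$. The natural route is to test the distributional formulations against a suitable family of nonnegative functions $\varphi_\delta(t,x)=\theta_\delta(t)\xi(x)$, where $\xi\ge 0$ is Lipschitz and $\theta_\delta$ is a smooth cutoff approximating $\mathbf 1_{[0,\delta]}$ appropriately (so that $\theta_\delta(T)=0$, concentrating the time-derivative near $t=0$). Applying this test function to the exact equation for $\phi_\epsilon^\sigma$ gives, after an integration by parts in $t$,
\[
-\int_\Omega\xi\,\phi_\epsilon^\sigma(0^+)\,dx + \int_0^T\!\!\int_\Omega \phi_\epsilon^\sigma\,\partial_t\varphi_\delta\,dx\,dt - \sigma\int_0^T\!\!\int_\Omega\phi_\epsilon^\sigma\,\xi''\,\theta_\delta\,dx\,dt + \int_0^T\!\!\int_\Omega \mathcal{H}(x,\mathcal{T}_\epsilon\rho_\epsilon^\sigma,\nabla\phi_\epsilon^\sigma)\,\varphi_\delta\,dx\,dt = 0.
\]
Letting $\epsilon\to 0$: the first term converges to $\int_\Omega\xi\,d\bar{\bar\phi}_0$ by definition of the weak-$\ast$ limit; the second and third terms pass to the limit by the strong $L^1(\mathrm{Q}_T)$ convergence $\phi_\epsilon^\sigma\to\phi^\sigma$ from Theorem \ref{Thmlim}; and for the Hamiltonian term, since $\mathcal{H}\ge -\tfrac12 f^{2-\beta}(\cdot)\ge -C$ is bounded below, Fatou's lemma (exactly as in Lemma \ref{lemma4}) yields $\int_0^T\!\!\int_\Omega\mathcal{H}(x,\rho^\sigma,\nabla\phi^\sigma)\varphi_\delta\le\liminf_\epsilon\int_0^T\!\!\int_\Omega\mathcal{H}(x,\mathcal{T}_\epsilon\rho_\epsilon^\sigma,\nabla\phi_\epsilon^\sigma)\varphi_\delta$. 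Passing to the limit turns the identity into the inequality
\[
\int_\Omega\xi\,d\bar{\bar\phi}_0 \ \le\ \int_0^T\!\!\int_\Omega \phi^\sigma\,\partial_t\varphi_\delta\,dx\,dt - \sigma\int_0^T\!\!\int_\Omega\phi^\sigma\,\xi''\,\theta_\delta\,dx\,dt + \int_0^T\!\!\int_\Omega \mathcal{H}(x,\rho^\sigma,\nabla\phi^\sigma)\,\varphi_\delta\,dx\,dt.
\]
Now let $\delta\to 0$: the $\sigma$-term and the Hamiltonian term vanish (their integrands are in $L^1(\mathrm{Q}_T)$ and are integrated over the shrinking strip $[0,\delta]\times\Omega$), while since $\phi^\sigma\in C(0,T;L^1(\Omega))$ with well-defined initial value $\phi^\sigma(0)$, the remaining term $\int_0^T\!\!\int_\Omega\phi^\sigma\partial_t\varphi_\delta$ converges to $\int_\Omega\xi\,\phi^\sigma(0)\,dx$. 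We conclude $\int_\Omega\xi\,d\bar{\bar\phi}_0\le\int_\Omega\xi\,\phi^\sigma(0)\,dx$ for every nonnegative Lipschitz $\xi$, which is precisely $\phi^\sigma(0)\ge\bar{\bar\phi}_0$ as measures.

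The main obstacle I anticipate is the careful handling of the initial trace and the choice of the cutoff $\theta_\delta$: one must ensure that integrating by parts in $t$ genuinely produces the boundary term $-\int_\Omega\xi\,\phi_\epsilon^\sigma(0^+)$ rather than an uncontrolled quantity, which is exactly why Lemma \ref{lemma33} (the $BV$-in-time structure and the well-definedness of $\phi^\sigma(0^+)$ as a Radon measure) is needed — the test function $\varphi_\delta$ is not compactly supported near $t=0$, so \eqref{weakphi} in its stated form does not directly apply and one works instead with the $BV$ representative of $t\mapsto\int_\Omega\xi\,\phi^\sigma(t)\,dx$. A secondary technical point is that the convergence $\mathcal{H}(x,\mathcal{T}_\epsilon\rho_\epsilon^\sigma,\nabla\phi_\epsilon^\sigma)\to\mathcal{H}(x,\rho^\sigma,\nabla\phi^\sigma)$ holds only a.e. (via the a.e. convergence of $\nabla\phi_\epsilon^\sigma$ and $\rho_\epsilon^\sigma$ from Theorem \ref{Thmlim}) and not in $L^1$, so the one-sided Fatou bound is all that survives — but that is the correct direction for the desired inequality $\phi^\sigma(0)\ge\bar{\bar\phi}_0$, so no loss occurs.
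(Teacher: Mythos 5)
The paper does not actually prove this lemma: it defers entirely to \cite[Lemma 4.7]{achdou2018mean}. Your proposal therefore supplies an argument where the paper supplies only a citation, and the argument you give is essentially the standard one from that reference: uniform $L^1(\mathrm{Q}_T)$ control of the Hamiltonian (from Lemma \ref{Lemmaex}) gives, via Lemma \ref{lemma33}, uniform bounds on the traces $\phi^{\sigma}_{\epsilon}|_{t=0}$ as Radon measures, hence weak-$\ast$ compactness; then one passes to the limit in the equation tested against $\theta_\delta(t)\xi(x)$, using strong $L^1$ convergence for the linear terms and Fatou (one-sided, thanks to $\mathcal{H}\ge -\tfrac12\rho_m^{2-\beta}$) for the Hamiltonian, and finally sends $\delta\to 0$ using the BV-in-time representative to recover $\phi^{\sigma}(0^+)$. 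This is the right strategy and all the key inputs are correctly identified.

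Two bookkeeping slips should be fixed, though they compensate each other and do not affect the conclusion. First, integrating $-\partial_t\phi^{\sigma}_{\epsilon}+\mathcal{H}-\sigma\Delta\phi^{\sigma}_{\epsilon}=0$ against $\varphi_\delta$ with $\varphi_\delta(T)=0$ and $\theta_\delta(0)=1$ produces the boundary term with a \emph{plus} sign, i.e.
\begin{equation*}
\int_\Omega \xi\,\phi^{\sigma}_{\epsilon}(0^+)\,dx+\int_0^T\!\!\int_\Omega \phi^{\sigma}_{\epsilon}\,\partial_t\varphi_\delta\,dx\,dt-\sigma\int_0^T\!\!\int_\Omega \phi^{\sigma}_{\epsilon}\,\Delta\varphi_\delta\,dx\,dt+\int_0^T\!\!\int_\Omega \mathcal{H}(x,\mathcal{T}_\epsilon\rho^{\sigma}_{\epsilon},\nabla\phi^{\sigma}_{\epsilon})\,\varphi_\delta\,dx\,dt=0,
\end{equation*}
not with a minus sign as you wrote; consequently the inequality obtained after Fatou reads $\int_\Omega\xi\,d\bar{\bar\phi}_0\le -\int\!\!\int\phi^{\sigma}\partial_t\varphi_\delta+\sigma\int\!\!\int\phi^{\sigma}\Delta\varphi_\delta-\int\!\!\int\mathcal{H}\varphi_\delta$ (with your sign convention for the identity, Fatou would give the reversed inequality, which is the internal inconsistency). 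Second, since $\theta_\delta$ decreases from $1$ to $0$ on $[0,\delta]$, one has $\int_0^T\!\!\int_\Omega\phi^{\sigma}\partial_t\varphi_\delta\to -\int_\Omega\xi\,\phi^{\sigma}(0^+)\,dx$, not $+$; combined with the corrected sign on the trace term this yields exactly $\int_\Omega\xi\,d\bar{\bar\phi}_0\le\int_\Omega\xi\,\phi^{\sigma}(0^+)\,dx$ as desired. A cosmetic point: in the framing of Lemma \ref{lemma33} the source is $h=-\mathcal{H}$, not $\mathcal{H}$, which is immaterial for the uniform $L^1$ bound. With these signs repaired the proof is complete and correct.
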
 

For the proof of Lemma \ref{lemmaA}, we send the reader to \cite[Lemma 4.7]{achdou2018mean}.
Now we give a useful lemma for the uniqueness results. 
\begin{lemma}\label{lemmaIneq}
Under the assumptions \text{{\bf (AS)}}, let $\phi^{\sigma} \in L^{\infty}(0,T;L^{1}(\Omega))$ satisfies \eqref{WeakEq} and let $\rho^{\sigma}$ be a solution of \eqref{EqFluidHd0x} and we assume that $(\rho^{\sigma},\phi^{\sigma})$ satisfy Definition \ref{defweaksig}. Then, we have 
 \begin{equation}\label{EqX0}
 \langle\phi^{\sigma}(0),\rho_{0}\rangle \le \int_{\Omega} \phi_{T} \tilde{\rho}^{\sigma}(T)dx  + \int_{0}^{T}\int_{\Omega} \tilde{\rho}\mathcal{H}_{p}(x,\tilde{\rho}^{\sigma},\nabla \tilde{\phi}^{\sigma})\cdot \nabla \phi^{\sigma} -\tilde{\rho}^{\sigma}\mathcal{H}(x,\rho^{\sigma},\nabla \phi^{\sigma})
\end{equation}
for any $(\tilde{\rho}^{\sigma},\tilde{\phi}^{\sigma})$ verifying the same condition as $(\rho^{\sigma},\phi^{\sigma})$. 
\end{lemma}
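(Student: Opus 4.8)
The plan is to run the standard ``crossed duality'' (or energy) argument for weak solutions of mean field games systems, in the spirit of \cite{porretta2015weak,achdou2018mean}: use the density $\tilde\rho^\sigma$ as a test function in the weak Hamilton--Jacobi subsolution inequality \eqref{WeakEq} for $\phi^\sigma$, and use $\phi^\sigma$ as a test function in the weak Fokker--Planck equation \eqref{weakrho} for $\tilde\rho^\sigma$; after integrating by parts on the torus (where there are no spatial boundary terms) the two computations recombine into a single relation, which is an \emph{inequality} precisely because $\phi^\sigma$ only satisfies \eqref{WeakEq} and $\tilde\rho^\sigma\ge0$. Formally: multiply the Fokker--Planck equation for $\tilde\rho^\sigma$ by $\phi^\sigma$, the inequality for $\phi^\sigma$ by $\tilde\rho^\sigma$, integrate over $\mathrm{Q}_T$, and subtract; the time boundary contributions produce $\int_\Omega\phi_T\tilde\rho^\sigma(T)\,dx-\langle\phi^\sigma(0),\rho_0\rangle$, the two diffusion terms cancel after two integrations by parts, and the transport term of $\tilde\rho^\sigma$ yields $\int_{\mathrm{Q}_T}\tilde\rho^\sigma\mathcal{H}_p(x,\tilde\rho^\sigma,\nabla\tilde\phi^\sigma)\cdot\nabla\phi^\sigma$, which is exactly \eqref{EqX0}. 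The asserted bound $\int_\Omega\phi_T\tilde\rho^\sigma(T)\,dx\le C$ (hence $\phi_T\tilde\rho^\sigma(T)\in L^1(\Omega)$) is \eqref{EstimateAlim} together with Lemma \ref{lemma4} applied to the pair $(\tilde\rho^\sigma,\tilde\phi^\sigma)$.

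The first point to make rigorous is the initial trace. By Lemma \ref{lemma33}, $t\mapsto\int_\Omega\xi(x)\phi^\sigma(t,x)\,dx$ is $BV$ on $[0,T]$ with a right limit at $t=0$, so $\phi^\sigma(0):=\phi^\sigma(0^+)$ is a well-defined bounded Radon measure and $\langle\phi^\sigma(0),\rho_0\rangle$ is meaningful because $\rho_0\in C(\Omega)$ by \textbf{(AS)}. Next I would upgrade \eqref{WeakEq} to test functions that do not vanish at $t=0$: applying it to $\chi_\delta\varphi$, where $\varphi\in C^\infty([0,T]\times\Omega)$, $\varphi\ge0$, and $\chi_\delta$ is a smooth nondecreasing time cutoff with $\chi_\delta\equiv0$ on $[0,\delta]$ and $\chi_\delta\equiv1$ on $[2\delta,T]$, and sending $\delta\to0$ (using the $BV$ right continuity from Lemma \ref{lemma33} and $\phi^\sigma\in L^\infty(0,T;L^1(\Omega))$ to control $\int_\Omega\phi^\sigma(t)(\varphi(t)-\varphi(0))\,dx=O(t)$), one obtains
\[
\int_{\mathrm{Q}_T}\phi^\sigma\partial_t\varphi\,dx\,dt-\sigma\int_{\mathrm{Q}_T}\phi^\sigma\Delta\varphi\,dx\,dt+\int_{\mathrm{Q}_T}\mathcal{H}(x,\rho^\sigma,\nabla\phi^\sigma)\varphi\,dx\,dt\le\int_\Omega\phi_T\varphi(T)\,dx-\langle\phi^\sigma(0),\varphi(0)\rangle .
\]

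I would then insert a regularized density. Put $\tilde\rho^{\sigma}_{\delta}:=m^{\delta}\ast_{x}\tilde\rho^{\sigma}\ge0$, smooth in $x$, with $\tilde\rho^{\sigma}_{\delta}(0)=m^{\delta}\ast_{x}\rho_0$ and $\tilde\rho^{\sigma}_{\delta}(T)=m^{\delta}\ast_{x}\tilde\rho^{\sigma}(T)$. Since $\tilde\rho^\sigma$ solves \eqref{EqFluidHd0x}, mollifying the equation in $x$ gives $\partial_t\tilde\rho^{\sigma}_{\delta}-\sigma\Delta\tilde\rho^{\sigma}_{\delta}=\nabla\cdot\tilde V_\delta$ with $\tilde V_\delta:=m^{\delta}\ast_{x}\big(\tilde\rho^\sigma\mathcal{H}_p(x,\tilde\rho^\sigma,\nabla\tilde\phi^\sigma)\big)$, so $\tilde\rho^{\sigma}_{\delta}$ is regular enough in time to be admissible in the inequality just displayed. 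Taking $\varphi=\tilde\rho^{\sigma}_{\delta}$ and integrating by parts in $x$ (legitimate since $\phi^\sigma\in L^q(0,T;W^{1,q}(\Omega))$ and $\tilde V_\delta$ is smooth in $x$) turns the parabolic part into $-\int_{\mathrm{Q}_T}\nabla\phi^\sigma\cdot\tilde V_\delta$, whence
\[
\langle\phi^\sigma(0),\tilde\rho^{\sigma}_{\delta}(0)\rangle\le\int_\Omega\phi_T\,\tilde\rho^{\sigma}_{\delta}(T)\,dx+\int_{\mathrm{Q}_T}\nabla\phi^\sigma\cdot\tilde V_\delta\,dx\,dt-\int_{\mathrm{Q}_T}\mathcal{H}(x,\rho^\sigma,\nabla\phi^\sigma)\,\tilde\rho^{\sigma}_{\delta}\,dx\,dt .
\]
As $\delta\to0$ the left side tends to $\langle\phi^\sigma(0),\rho_0\rangle$ (because $m^{\delta}\ast\rho_0\to\rho_0$ uniformly and $\phi^\sigma(0)$ is finite), $\int_\Omega\phi_T\tilde\rho^{\sigma}_{\delta}(T)\,dx\to\int_\Omega\phi_T\tilde\rho^\sigma(T)\,dx$ (using $\tilde\rho^\sigma\in C([0,T];L^1)$ and $\phi_T\in L^\infty$), and, writing $\mathcal{H}(x,\rho^\sigma,\nabla\phi^\sigma)=\tfrac12 f^\beta(\rho^\sigma)|\nabla\phi^\sigma|^2-\tfrac12 f^{2-\beta}(\rho^\sigma)$, the term $\int_{\mathrm{Q}_T}\mathcal{H}(x,\rho^\sigma,\nabla\phi^\sigma)\tilde\rho^{\sigma}_{\delta}$ converges by dominated convergence (first summand $\ge0$ and in $L^1(\mathrm{Q}_T)$ by Definition \ref{defweaksig}(ii), $\tilde\rho^{\sigma}_{\delta}\to\tilde\rho^\sigma$ a.e. and bounded) and by $L^1$ convergence against the bounded second summand.

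The main obstacle is the remaining term $\int_{\mathrm{Q}_T}\nabla\phi^\sigma\cdot\tilde V_\delta$: since $\nabla\phi^\sigma$ only lies in $L^q(\mathrm{Q}_T)$ for $q<\tfrac43<2$, one cannot simply pair it with $\tilde V_\delta$ by H\"older's inequality, and instead I would exploit the convexity of $p\mapsto\mathcal{H}(x,\rho,p)$, i.e. the pointwise inequality $\mathcal{H}_p(x,\tilde\rho^\sigma,\nabla\tilde\phi^\sigma)\cdot\nabla\phi^\sigma\le\mathcal{H}(x,\tilde\rho^\sigma,\nabla\phi^\sigma)+\big(\mathcal{H}_p(x,\tilde\rho^\sigma,\nabla\tilde\phi^\sigma)\cdot\nabla\tilde\phi^\sigma-\mathcal{H}(x,\tilde\rho^\sigma,\nabla\tilde\phi^\sigma)\big)$, to dominate the $\delta$-family by quantities controlled through the $L^1$ a priori bounds of Lemma \ref{Lemmaex} and Theorem \ref{Thmlim} for the pair $(\tilde\rho^\sigma,\tilde\phi^\sigma)$; this same inequality is also what makes the right-hand side of \eqref{EqX0} well defined in $(-\infty,+\infty]$. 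Then lower semicontinuity and Fatou's lemma let one pass to the limit in the correct direction, and collecting the four limits yields \eqref{EqX0}.
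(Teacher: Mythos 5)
Your proof is correct and follows essentially the same crossed-duality argument as the paper: mollify $\tilde\rho^{\sigma}$, use it as a test function in the weak HJB inequality for $\phi^{\sigma}$, shift the parabolic operator onto $\phi^{\sigma}$ via the Fokker--Planck equation satisfied by $\tilde\rho^{\sigma}$, and pass to the limit using Fatou's lemma together with the convexity of $p\mapsto\mathcal{H}(x,\rho,p)$. The only difference is technical bookkeeping at $t=0$: the paper additionally mollifies in time with a one-sided kernel supported in $(-\epsilon,0)$ so that the test function vanishes near $t=0$ and is directly admissible in \eqref{WeakEq}, whereas you first extend \eqref{WeakEq} to test functions that do not vanish at $t=0$ --- both devices recover the trace $\langle\phi^{\sigma}(0),\rho_{0}\rangle$ through Lemma \ref{lemma33}.
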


\begin{proof}
Let $m_{\delta}(.)$ be s sequence of standard symmetric mollifiers in $\mathbb{R}^{2}$ and set
\[
\tilde{\rho}^{\sigma}_{\delta}(t,x)=\tilde{\rho}^{\sigma}_{\delta}\star m_{\delta} = \int_{\mathbb{R}^{2}} \tilde{\rho}^{\sigma}_{\delta}(t,y) m_{\delta}(x-y) dy.
\]
Since $\tilde{\rho}^{\sigma} \in L^{\infty}(0,T;L^{1}(\Omega))$ ,we deduce that $\tilde{\rho}^{\sigma}_{\delta}$, $\nabla \tilde{\rho}^{\sigma}_{\delta} \in L^{\infty}(\mathrm{Q}_{T})$. We consider a sequence of $1-d$ mollifiers $\xi_{\epsilon}(t)$ such that $\text{supp} (\xi_{\epsilon}) \subset (-\epsilon,0)$, and we set
\[
\tilde{\rho}^{\sigma}_{\delta,\epsilon} := \int_{0}^{T}\xi_{\epsilon}(s-t)\tilde{\rho}^{\sigma}_{\delta}(s) ds
\]
Notice that this function vanishes at $t=0$, so we can consider it as a test function in the inequality satisfied by $\phi^{\sigma}$. We get 
\begin{equation}\label{EsA}
\int_{0}^{T}\int_{\Omega}\phi^{\sigma} \left(\partial_{t}\tilde{\rho}^{\sigma}_{\delta,\epsilon} - \sigma\Delta \tilde{\rho}^{\sigma}_{\delta,\epsilon}  \right) dx dt + \int_{0}^{T}\int_{\Omega}\mathcal{H}(x,\rho^{\sigma},\nabla \phi^{\sigma})\tilde{\rho}^{\sigma}_{\delta,\epsilon} dx dt  \le \int_{\Omega} \phi_{T} \tilde{\rho}^{\sigma}_{\delta,\epsilon} dx 
\end{equation}
Let ${\phi^{\sigma}_{\delta,\epsilon}(s,y)= \int_{0}^{T}\int_{\Omega} \phi^{\sigma}_{\delta,\epsilon}\xi_{\epsilon}(s-t) m_{\delta}(x-y) dt dx }$, then 
\[
\int_{0}^{T}\int_{\Omega}\phi^{\sigma} \left(\partial_{t}\tilde{\rho}^{\sigma}_{\delta,\epsilon} - \sigma\Delta \tilde{\rho}^{\sigma}_{\delta,\epsilon}  \right) dx dt = \int_{0}^{T}\int_{\Omega} \left(-\partial_{s}\phi^{\sigma}_{\delta,\epsilon} - \sigma\Delta \phi^{\sigma}_{\delta,\epsilon} \right) \tilde{\rho}^{\sigma}_{\delta,\epsilon} (s,y)dy ds.
\]
Moreover, from the equation of $\tilde{\rho}^{\sigma}$ we obtain 
\begin{equation}
\begin{aligned}
\int_{0}^{T}\int_{\Omega} &\left(-\partial_{s}\phi^{\sigma}_{\delta,\epsilon} - \sigma\Delta \phi^{\sigma}_{\delta,\epsilon} \right) \tilde{\rho}^{\sigma}_{\delta,\epsilon} (s,y)dy ds \\& =-\int_{0}^{T}\int_{\Omega}  \tilde{\rho}^{\sigma}_{\delta,\epsilon} (s,y)  \tilde{b}_{\delta,\epsilon} (s,y)\cdot \nabla \phi^{\sigma}_{\delta,\epsilon} dy ds + \int_{\Omega}\tilde{\rho}^{\sigma}_{0}(y)\phi^{\sigma}_{\delta,\epsilon}(0) dy
\end{aligned}
\end{equation}
where $\tilde{b}_{\delta,\epsilon} = \mathcal{H}_{p}(x,\tilde{\rho}^{\sigma},\nabla \tilde{\phi}^{\sigma})$. We have 
\begin{equation}
\begin{aligned}
-\int_{0}^{T}\int_{\Omega}\nabla \phi^{\sigma} \cdot \tilde{w}_{\delta,\epsilon} &+  \int_{0}^{T}\int_{\Omega}\mathcal{H}(x,\rho^{\sigma},\nabla \phi^{\sigma})\tilde{\rho}^{\sigma}_{\delta,\epsilon} dx dt  + \int_{\Omega} \tilde{\rho}^{\sigma}_{0}\star m_{\delta}\int_{0}^{T}\phi^{\sigma}(t)\xi_{\epsilon}(-t) dt dx \\& \le  \int_{\Omega}\phi_{T} \rho^{\sigma}_{\delta,\epsilon}  dx
\end{aligned}
\end{equation}
where here we denote $\tilde{w}_{\delta} = \left(\left(\tilde{b}\tilde{\rho}^{\sigma}\right)\star m_{\delta} \right)$ and $\tilde{w}_{\delta,\epsilon}= \int_{0}^{T} \tilde{w}_{\delta}(s)\xi_{\epsilon}(s-t) ds$.
Now from the definition \ref{defweaksig}, we can use Fatou's lemma, and we deduce
\begin{equation}
\begin{aligned}
\liminf_{\epsilon \to 0}& \int_{0}^{T}\int_{\Omega} \left(\mathcal{H}(x,\rho^{\sigma},\nabla \phi^{\sigma})\tilde{\rho}^{\sigma}_{\delta,\epsilon} - \nabla \phi^{\sigma} \cdot \tilde{w}_{\delta,\epsilon} \right)dx dt\\ &
=\liminf_{\epsilon \to 0} \int_{0}^{T}\int_{\Omega}  \int_{0}^{T} \left(\mathcal{H}(x,\rho^{\sigma},\nabla \phi^{\sigma})\tilde{\rho}^{\sigma}_{\delta} - \nabla \phi^{\sigma} \cdot \tilde{w}_{\delta} \right) \xi_{\epsilon}(s-t) ds dx dt\\ \qquad &  \ge  \int_{0}^{T}\int_{\Omega} \left(\mathcal{H}(x,\rho^{\sigma},\nabla \phi^{\sigma})\tilde{\rho}^{\sigma}_{\delta} - \nabla \phi^{\sigma} \cdot \tilde{w}_{\delta} \right) dx dt
\end{aligned}
\end{equation}
and in the same way we get 
\begin{equation}\label{EqRes0}
\begin{aligned}
\liminf_{\delta \to 0}& \int_{0}^{T} \int_{\Omega}  \left(\mathcal{H}(x,\rho^{\sigma},\nabla \phi^{\sigma})\tilde{\rho}^{\sigma}_{\delta} - \nabla \phi^{\sigma} \cdot \tilde{w}_{\delta} \right) dx dt\\& \ge  \int_{0}^{T}\int_{\Omega}  \left(\mathcal{H}(x,\rho^{\sigma},\nabla \phi^{\sigma})\tilde{\rho}^{\sigma} - \nabla \phi^{\sigma} \cdot\mathcal{H}(x,\rho^{\sigma},\nabla \phi^{\sigma})\tilde{\rho}^{\sigma}  \right)dx dt
\end{aligned}
\end{equation}
For $t=0$, we have $\int_{\Omega} \phi^{\sigma}(t)(\tilde{\rho_{0}}\star m_{\delta}) dx $ has a trace at $t=0$ from Lemma \ref{lemma33} and this trace is continuous as $\delta \to 0$ since $\tilde{\rho}_{0}$ is continuous. Then as $\epsilon \to 0 $, by \eqref{EsA} we get 
\begin{equation}\label{EqRes}
\begin{aligned}
 \int_{0}^{T} \int_{\Omega}&\left(\mathcal{H}(x,\rho^{\sigma},\nabla \phi^{\sigma})\tilde{\rho}^{\sigma}_{\delta} - \nabla \phi^{\sigma} \cdot \tilde{w}_{\delta} \right) dx dt + \int_{\Omega} \phi^{\sigma}(t)(\tilde{\rho_{0}}\star m_{\delta}) dx \le & \int_{\Omega}\phi_{T}\star m_{\delta}\tilde{\rho}_{\delta}(T)dx 
 \end{aligned}
\end{equation}
Passing now to the limit in \eqref{EqRes} and using \eqref{EqRes0} we deduce \eqref{EqX0}.
\end{proof}

\begin{lemma}
Consider a subsequence $(\rho^{\sigma}_{\epsilon},\phi^{\sigma}_{\epsilon})$  converging to $(\rho^{\sigma},\phi^{\sigma})$ as in Theorem \ref{Thmlim}. Then, $(\rho^{\sigma},\phi^{\sigma})$ satisfies the following energy identity
 \begin{equation}\label{Eqenerg}
\langle\rho^{\sigma}(T),\phi_{T}\rangle + \int_{0}^{T}\int_{\Omega} \rho\mathcal{H}_{p}(x,\rho^{\sigma},\nabla \phi^{\sigma})\cdot \nabla \phi^{\sigma} -\mathcal{H}(x,\rho^{\sigma},\nabla \phi^{\sigma})= \langle\phi^{\sigma}(0),\rho_{0}\rangle
\end{equation}
\end{lemma}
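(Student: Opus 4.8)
The plan is to pin down \eqref{Eqenerg} by trapping $\langle\phi^{\sigma}(0),\rho_{0}\rangle$ between two matching bounds: a lower bound obtained by passing to the limit in the \emph{exact} energy balance of the regularized system \eqref{EqFluidHd0xa}--\eqref{EikonalInxa}, and an upper bound that is nothing but Lemma~\ref{lemmaIneq} applied to the tautological pair $(\tilde{\rho}^{\sigma},\tilde{\phi}^{\sigma})=(\rho^{\sigma},\phi^{\sigma})$. For the first ingredient, recall from the opening computation in the proof of Lemma~\ref{Lemmaex} that for each $\epsilon$
\[
\int_{\Omega}\phi^{\sigma}_{\epsilon}(0)\,\rho_{\epsilon 0}\,dx=\int_{\Omega}\phi_{\epsilon T}\,\rho^{\sigma}_{\epsilon}(T)\,dx+\int_{0}^{T}\!\!\int_{\Omega}\rho^{\sigma}_{\epsilon}\Big(\mathcal{H}_{p}(x,\mathcal{T}_{\epsilon}\rho^{\sigma}_{\epsilon},\nabla\phi^{\sigma}_{\epsilon})\cdot\nabla\phi^{\sigma}_{\epsilon}-\mathcal{H}(x,\mathcal{T}_{\epsilon}\rho^{\sigma}_{\epsilon},\nabla\phi^{\sigma}_{\epsilon})\Big)\,dx\,dt,
\]
where, by the explicit form \eqref{EqHamil} of $\mathcal{H}$, the last integrand equals $\tfrac12\rho^{\sigma}_{\epsilon}f^{\beta}(\mathcal{T}_{\epsilon}\rho^{\sigma}_{\epsilon})|\nabla\phi^{\sigma}_{\epsilon}|^{2}+\tfrac12\rho^{\sigma}_{\epsilon}f^{2-\beta}(\mathcal{T}_{\epsilon}\rho^{\sigma}_{\epsilon})\ge 0$.

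I would then send $\epsilon\to0$ along the subsequence of Theorem~\ref{Thmlim}. The terminal boundary term converges genuinely: $\rho^{\sigma}_{\epsilon}(T)\to\rho^{\sigma}(T)$ in $L^{1}(\Omega)$ by the $C([0,T];L^{1}(\Omega))$ convergence, while $\phi_{\epsilon T}=m^{\epsilon}\ast\phi_{T}\to\phi_{T}$ uniformly and stays uniformly bounded (as $\phi_{T}\in C(\Omega)$), so $\int_{\Omega}\phi_{\epsilon T}\rho^{\sigma}_{\epsilon}(T)\,dx\to\langle\rho^{\sigma}(T),\phi_{T}\rangle$. The initial term also converges: $\rho_{\epsilon 0}=m^{\epsilon}\ast\rho_{0}\to\rho_{0}$ uniformly and, by Lemma~\ref{lemmaA}, $\phi^{\sigma}_{\epsilon}|_{t=0}\rightharpoonup\bar{\bar{\phi}}_{0}$ weakly-$*$ in $\mathcal{M}(\Omega)$ with $\phi^{\sigma}(0)\ge\bar{\bar{\phi}}_{0}$, and using the uniform bound on the total variation of $\phi^{\sigma}_{\epsilon}(0)$ (i.e.\ \eqref{labboubdn} and \eqref{eqX0ZA}) one gets $\int_{\Omega}\phi^{\sigma}_{\epsilon}(0)\rho_{\epsilon 0}\,dx\to\langle\bar{\bar{\phi}}_{0},\rho_{0}\rangle$. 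Finally, since $\rho^{\sigma}_{\epsilon}\to\rho^{\sigma}$, $\mathcal{T}_{\epsilon}\rho^{\sigma}_{\epsilon}\to\rho^{\sigma}$ and $\nabla\phi^{\sigma}_{\epsilon}\to\nabla\phi^{\sigma}$ a.e., continuity of $\mathcal{H}$ and $\mathcal{H}_{p}$ makes the nonnegative dissipation density converge a.e., so by Fatou its integral has a limit bounded below by $\int_{0}^{T}\!\int_{\Omega}\rho^{\sigma}\big(\mathcal{H}_{p}(x,\rho^{\sigma},\nabla\phi^{\sigma})\cdot\nabla\phi^{\sigma}-\mathcal{H}(x,\rho^{\sigma},\nabla\phi^{\sigma})\big)$. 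Subtracting the (convergent) terminal term from the (convergent) initial term and combining with $\phi^{\sigma}(0)\ge\bar{\bar{\phi}}_{0}$ and $\rho_{0}\ge0$, I obtain
\[
\langle\phi^{\sigma}(0),\rho_{0}\rangle\ \ge\ \langle\rho^{\sigma}(T),\phi_{T}\rangle+\int_{0}^{T}\!\!\int_{\Omega}\rho^{\sigma}\big(\mathcal{H}_{p}(x,\rho^{\sigma},\nabla\phi^{\sigma})\cdot\nabla\phi^{\sigma}-\mathcal{H}(x,\rho^{\sigma},\nabla\phi^{\sigma})\big)\,dx\,dt.
\]

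For the reverse inequality I would simply invoke Lemma~\ref{lemmaIneq}: the pair $(\rho^{\sigma},\phi^{\sigma})$ satisfies all its hypotheses (the inequality \eqref{WeakEq} and $\phi_{T}\rho^{\sigma}(T)\in L^{1}(\Omega)$ come from Lemma~\ref{lemma4}, while the integrability in Definition~\ref{defweaksig}(ii) and the Kolmogorov formulation \eqref{weakrho} come from Theorem~\ref{Thmlim}), and choosing $(\tilde{\rho}^{\sigma},\tilde{\phi}^{\sigma})=(\rho^{\sigma},\phi^{\sigma})$ in \eqref{EqX0} collapses every argument of $\mathcal{H}$ and $\mathcal{H}_{p}$ to the untilded ones, yielding precisely the reverse of the displayed inequality. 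Putting the two bounds together proves \eqref{Eqenerg}. As a by-product, equality forces the Fatou step above to be an equality — the dissipation density converges strongly in $L^{1}(\mathrm{Q}_{T})$ — and $\langle\phi^{\sigma}(0),\rho_{0}\rangle=\langle\bar{\bar{\phi}}_{0},\rho_{0}\rangle$, although neither fact is needed for the statement.

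The genuine obstacle — and the reason the proof must be a two-sided squeeze rather than a direct limit — is the failure of continuity of $t\mapsto\phi^{\sigma}(t)$ down to $t=0$: the regularized traces $\phi^{\sigma}_{\epsilon}(0)$ only converge weakly-$*$ to a measure $\bar{\bar{\phi}}_{0}$ that may sit strictly below $\phi^{\sigma}(0)$, so passing to the limit in the exact identity degenerates into an inequality, and one genuinely needs the independently proved Lemma~\ref{lemmaIneq} (obtained by a double mollification/duality argument against the Kolmogorov equation) to supply the missing half. A secondary point requiring care is the a.e.\ convergence $\mathcal{T}_{\epsilon}\rho^{\sigma}_{\epsilon}\to\rho^{\sigma}$, which rests on $0\le\rho^{\sigma}\le\rho_{m}$ a.e.\ (inherited from the $L^{\infty}(0,T;L^{2}(\Omega))$ bound on $\rho_{m}-\rho^{\sigma}_{\epsilon}$ in Lemma~\ref{Lemmaex}), so that the Fatou bound is written with the limiting Hamiltonian evaluated at its correct second argument $\rho^{\sigma}$.
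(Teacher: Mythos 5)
Your proposal is correct and follows essentially the same route as the paper: pass to the limit in the regularized energy identity using Fatou's lemma on the nonnegative dissipation term and Lemma~\ref{lemmaA} for the trace at $t=0$, then close the resulting one-sided inequality by invoking Lemma~\ref{lemmaIneq} with $(\tilde{\rho}^{\sigma},\tilde{\phi}^{\sigma})=(\rho^{\sigma},\phi^{\sigma})$. The only difference is cosmetic — you organize the squeeze around $\langle\phi^{\sigma}(0),\rho_{0}\rangle$ whereas the paper squeezes $\limsup_{\epsilon\to 0}\int_{\Omega}\phi_{\epsilon T}\rho^{\sigma}_{\epsilon}(T)\,dx$ — but since the terminal pairing genuinely converges, the two formulations are equivalent.
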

\begin{proof}
We star by the energy identity for the system  \eqref{EqFluidHd0xa}-\eqref{EikonalInxa}
 \begin{equation}\label{EqX}
\int_{\Omega}\phi^{\sigma}_{\epsilon}(0) \rho_{\epsilon 0} dx = \int_{\Omega}\phi_{\epsilon T}\rho^{\sigma}_{\epsilon}(T)dx + \int_{0}^{T}\int_{\Omega} \rho^{\sigma}_{\epsilon}\left(\mathcal{H}_{p}(x,\mathcal{T}_{\epsilon}\rho^{\sigma}_{\epsilon},\nabla \phi^{\sigma}_{\epsilon})\cdot \nabla \phi^{\sigma}_{\epsilon} -\mathcal{H}(x,\mathcal{T}_{\epsilon}\rho^{\sigma}_{\epsilon},\nabla \phi^{\sigma}_{\epsilon}) \right) dx dt.
\end{equation}
 Using ${\left(\mathcal{H}_{p}(x,\rho, p)\cdot p -\mathcal{H}(x,\rho, p) \right) \ge 0}$, estimate \eqref{EstimateA} and Fatou's lemma ,we get 
 \begin{equation}\label{EqX1}
 \begin{aligned}
\lim_{\epsilon \to 0}\inf \int_{0}^{T}\int_{\Omega} \rho^{\sigma}_{\epsilon} &\left(\mathcal{H}_{p}(x,\mathcal{T}_{\epsilon}\rho^{\sigma}_{\epsilon},\nabla \phi^{\sigma}_{\epsilon})\cdot \nabla \phi^{\sigma}_{\epsilon} -\mathcal{H}(x,\mathcal{T}_{\epsilon}\rho^{\sigma}_{\epsilon},\nabla \phi^{\sigma}_{\epsilon}) \right) dx dt \\& \ge  \int_{0}^{T}\int_{\Omega} \rho^{\sigma}\left(\mathcal{H}_{p}(x,\rho^{\sigma},\nabla \phi^{\sigma})\cdot \nabla \phi^{\sigma} -\mathcal{H}(x,\rho^{\sigma},\nabla \phi^{\sigma}) \right) dx dt.
\end{aligned}
\end{equation}
From Lemma \ref{lemmaA}, we can assume that ${\phi^{\sigma}_{\epsilon}}_{t=0}$ converges weakly $\star$ to a bounded measure $\bar{\bar{\phi}}_{0}$ on $\Omega$, and $\bar{\bar{\phi}}_{0} \le \phi^{\sigma}(0)$. Since $\rho_{0}\in C(\Omega)$, we deduce 
\[
\lim _{\epsilon \to 0}\int_{\Omega}\phi^{\sigma}_{\epsilon}(0)\rho_{\epsilon 0} dx \le \langle \phi^{\sigma}_{\epsilon}(0),\rho_{0}\rangle.
\] 
Combining  the previous results we get from \eqref{EqX}
 \begin{equation}\label{Eqss}
\begin{aligned}
\limsup_{\epsilon \to 0}&\int_{\Omega}\phi_{T}\rho_{\epsilon}^{\sigma}(T) dx  \le   \langle\phi^{\sigma}(0),\rho_{0}\rangle - \int_{0}^{T}\int_{\Omega} \rho^{\sigma}\left(\mathcal{H}_{p}(x,\rho^{\sigma},\nabla \phi^{\sigma})\cdot \nabla \phi^{\sigma} -\mathcal{H}(x,\rho^{\sigma},\nabla \phi^{\sigma}) \right) dx dt
\end{aligned}
\end{equation}
We use \eqref{EqX0} in Lemma \ref{lemmaIneq} for $\rho^{\sigma}=\tilde{\rho}^{\sigma}$ and $\phi^{\sigma}=\tilde{\phi}^{\sigma}$
 \begin{equation}\label{Eqss}
\begin{aligned}
\limsup_{\epsilon \to 0}\int_{\Omega}\phi_{\epsilon T}\rho_{\epsilon}^{\sigma}(T) dx  \le  & \langle\phi^{\sigma}(0),\rho_{0}\rangle - \int_{0}^{T}\int_{\Omega} \rho^{\sigma}\left(\mathcal{H}_{p}(x,\rho^{\sigma},\nabla \phi^{\sigma})\cdot \nabla \phi^{\sigma} -\mathcal{H}(x,\rho^{\sigma},\nabla \phi^{\sigma}) \right) dx dt\\ & \le \int_{\Omega}\phi_{T}\rho^{\sigma}(T) dx 
\end{aligned}
\end{equation}
Then, we can conclude \eqref{Eqenerg}.
\end{proof}
Now, we will give the existence and uniqueness results 
\begin{proposition}\label{propoexun}
Consider a subsequence $(\rho^{\sigma}_{\epsilon},\phi^{\sigma}_{\epsilon})$ converging to the $(\rho^{\sigma}, \phi^{\sigma} )$ as in Theorem \ref{Thmlim}. Then, $(\rho^{\sigma}, \phi^{\sigma} )$ is a weak solution of  \eqref{EqFluidHd0x}-\eqref{EikonalInx}. Moreover, for any weak solution $(\rho^{\sigma}, \phi^{\sigma} )$ to \eqref{EqFluidHd0x}-\eqref{EikonalInx}, the energy identity \eqref{Eqenerg} remains true. Furthermore, under the following monotonicity condition  
 \begin{equation}\label{Ourassumption}
\mathbf{H}_{c}=\begin{pmatrix}
-\frac{2}{\rho}\mathcal{H}_{\rho}(x,\rho^{\sigma}, p)& \mathcal{H}_{\rho,p}^{T}( x,\rho^{\sigma}, p )\\
\mathcal{H}_{\rho,p}( x,\rho^{\sigma}, p )& 2\mathcal{H}_{p,p}( x,\rho^{\sigma}, p )\\
\end{pmatrix} 
 \ge 0, \quad \rho>0,
\end{equation}
there exists a unique weak solution of the system \eqref{EqFluidHd0x}-\eqref{EikonalInx} in sense of Definition \ref{defweaksig}. In particular,  
 \begin{enumerate}
\item  If $\beta=2$ and ${\rho^{\sigma} \le \frac{\rho_{m}}{2}}$, there exists a unique weak solution of the system \eqref{EqFluidHd0x}-\eqref{EikonalInx}.
\item If $\beta=0$ the monotonicity condition \eqref{Ourassumption} is not fulfilled.
\end{enumerate}
\end{proposition}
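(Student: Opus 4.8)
The plan is to split the statement into the existence part, the universal energy identity, and the uniqueness part, and to handle each in turn. For \textbf{existence}, almost all of the work is already done: Theorem \ref{Thmlim} gives a limit pair $(\rho^\sigma,\phi^\sigma)$ with $\rho^\sigma\in C(0,T;L^1(\Omega))\cap L^2(0,T;H^1(\Omega))$, $\phi^\sigma\in L^\infty(0,T;L^1(\Omega))\cap L^q(0,T;W^{1,q}(\Omega))$ for $q<4/3$, the a.e.\ convergence of $\phi^\sigma_\epsilon$ and $\nabla\phi^\sigma_\epsilon$, the bound \eqref{EstimateAlim}, and the fact that \eqref{weakrho} passes to the limit. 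So for existence I would just check off the items of Definition \ref{defweaksig}: (i) and (iv) are Theorem \ref{Thmlim}; (ii) is \eqref{EstimateAlim} together with the explicit form \eqref{EqHamil} of $\mathcal H$ (so that $\rho^\sigma f^\beta(\rho^\sigma)|\nabla\phi^\sigma|^2$ and $f^\beta(\rho^\sigma)|\nabla\phi^\sigma|^2$ are in $L^1$); and (iii), the distributional HJB inequality/equation \eqref{weakphi}, follows by passing to the limit in the weak formulation of \eqref{EqEikonalHd0xa} using the $L^1$-compactness of $\mathcal H(x,\mathcal T_\epsilon\rho^\sigma_\epsilon,\nabla\phi^\sigma_\epsilon)$ established in Lemma \ref{Lemmaex} and a.e.\ convergence, via Vitali/Fatou; the boundary term at $t=T$ is handled exactly as in Lemma \ref{lemma4}. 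For the \textbf{universal energy identity}, the previous lemma proved \eqref{Eqenerg} for the constructed limit; to get it for an \emph{arbitrary} weak solution I would run the argument of Lemma \ref{lemmaIneq} twice, once in each ``direction,'' i.e.\ apply \eqref{EqX0} with $(\tilde\rho^\sigma,\tilde\phi^\sigma)=(\rho^\sigma,\phi^\sigma)$ to get $\langle\phi^\sigma(0),\rho_0\rangle\le \langle\phi_T,\rho^\sigma(T)\rangle+\int_0^T\!\!\int_\Omega \rho^\sigma(\mathcal H_p\cdot\nabla\phi^\sigma-\mathcal H)$, and combine it with the reverse inequality obtained from testing the Kolmogorov equation against (a mollification of) $\phi^\sigma$ and using the subsolution property from Lemma \ref{lemma33}/Lemma \ref{lemmaA} at $t=0$; the two opposite inequalities force equality.

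For \textbf{uniqueness} I would use the Lasry--Lions strategy adapted to weak solutions, exactly in the spirit of Lions' condition \eqref{uniq} recalled earlier. Suppose $(\rho_1,\phi_1)$ and $(\rho_2,\phi_2)$ are two weak solutions with the same data. The key device is the cross-inequality from Lemma \ref{lemmaIneq}: apply \eqref{EqX0} with the roles $(\phi^\sigma,\rho^\sigma)=(\phi_1,\rho_1)$, $(\tilde\rho^\sigma,\tilde\phi^\sigma)=(\rho_2,\phi_2)$ and then again with $1\leftrightarrow 2$ swapped, and subtract from the sum of the two energy identities \eqref{Eqenerg} (one for each solution). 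After cancellation of the $\langle\phi_i(0),\rho_0\rangle$ and $\langle\phi_T,\rho_i(T)\rangle$ terms, what remains is a single space-time integral of the quantity
\[
\rho_1\big(\mathcal H(x,\rho_1,\nabla\phi_1)-\mathcal H(x,\rho_2,\nabla\phi_2)-\mathcal H_p(x,\rho_2,\nabla\phi_2)\cdot(\nabla\phi_1-\nabla\phi_2)\big)+(1\leftrightarrow 2),
\]
and the claim is that this integrand is $\le 0$ pointwise precisely when the matrix $\mathbf H_c$ in \eqref{Ourassumption} is nonnegative. I would verify this by a second-order Taylor expansion of $\mathcal H(x,\cdot,\cdot)$ along the segment joining $(\rho_1,\nabla\phi_1)$ and $(\rho_2,\nabla\phi_2)$: writing $m=\rho_2-\rho_1$, $z=\nabla\phi_2-\nabla\phi_1$, the symmetrized combination equals $-\int_0^1 (m,z)^{\!\top}\widetilde{\mathbf H}(s)\,(m,z)\,ds$ where $\widetilde{\mathbf H}$ has precisely the block structure of \eqref{Ourassumption} (the $-\tfrac{2}{\rho}\mathcal H_\rho$ block coming from the $\rho\,\partial_\rho\mathcal H$ terms weighted by the two densities, the off-diagonal $\mathcal H_{\rho,p}$ block from the mixed terms, and $2\mathcal H_{pp}$ from the gradient terms). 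Hence $\mathbf H_c\ge 0$ gives that the total integral is $\le 0$; but by \eqref{Eqenerg} and the two cross-inequalities it is also $\ge 0$, so it vanishes, which forces $\nabla\phi_1=\nabla\phi_2$ a.e.\ on $\{\rho_1>0\}\cup\{\rho_2>0\}$ (using strict convexity of $\mathcal H$ in $p$) and then, feeding this back into the Kolmogorov equation, $\rho_1=\rho_2$ by uniqueness for the linear Fokker--Planck equation, and finally $\phi_1=\phi_2$ by uniqueness for the linear backward equation with the now-identical drift. For the two special cases: when $\beta=2$ one computes $\mathcal H=\tfrac12 f^2(\rho)|p|^2-\tfrac12$ with $f(\rho)=\rho_m-\rho$, so $\mathcal H_\rho=-f(\rho)f'(\rho)|p|^2=f(\rho)|p|^2>0$ hmm—more carefully $\mathcal H_\rho=-(\rho_m-\rho)|p|^2$, $\mathcal H_{\rho\rho}=|p|^2$, $\mathcal H_{pp}=f^2(\rho)I$, $\mathcal H_{\rho p}=-2f(\rho)p$, and one checks that $-\tfrac2\rho\mathcal H_\rho\cdot 2\mathcal H_{pp}-|\mathcal H_{\rho p}|^2=\tfrac{4}{\rho}(\rho_m-\rho)^3|p|^2-4(\rho_m-\rho)^2|p|^2=4(\rho_m-\rho)^2|p|^2\big(\tfrac{\rho_m-\rho}{\rho}-1\big)\ge 0\iff \rho\le\rho_m/2$, giving item (1); when $\beta=0$, $\mathcal H=\tfrac12|p|^2-\tfrac12 f^2(\rho)$ so $\mathcal H_\rho=(\rho_m-\rho)>0$ makes the $(1,1)$ entry $-\tfrac2\rho\mathcal H_\rho<0$, so $\mathbf H_c$ is not positive semidefinite, giving item (2).

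The main obstacle will be the uniqueness step, and within it the rigorous justification of the cross-inequality manipulations at the level of \emph{weak} solutions: one cannot simply multiply equations and integrate by parts, so the mollification-in-space-and-time scheme of Lemma \ref{lemmaIneq} (with the $\xi_\epsilon$ supported in $(-\epsilon,0)$ to kill the $t=0$ boundary term, and $m_\delta$ in space) has to be carried through carefully, controlling the commutators $\mathcal H_p(x,\tilde\rho^\sigma,\nabla\tilde\phi^\sigma)\tilde\rho^\sigma\star m_\delta - (\mathcal H_p\,\tilde\rho^\sigma)\star m_\delta$ and using the a.e.\ convergence plus the uniform $L^1$ bounds of Lemma \ref{Lemmaex} to pass to the limit; the $t=0$ trace is handled by Lemma \ref{lemma33}/Lemma \ref{lemmaA} and the continuity of $\rho_0$. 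A secondary subtlety is that the pointwise Taylor/convexity argument only yields $\nabla\phi_1=\nabla\phi_2$ where the relevant density is positive, so one needs the structural fact (true here since $f^\beta(\rho)>0$ for $\rho<\rho_m$ and the blow-up-type behaviour of $\mathcal H$) that the support issue does not obstruct concluding $\rho_1=\rho_2$; this is where the restriction $\rho^\sigma\le\rho_m/2$ in item (1) and the degeneracy of $f$ at $\rho=\rho_m$ must be tracked. Everything else—the existence checklist and the universal energy identity—is essentially bookkeeping on top of Theorem \ref{Thmlim} and Lemmas \ref{lemma4}, \ref{lemmaIneq}, and the preceding energy-identity lemma.
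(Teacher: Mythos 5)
Your proposal follows essentially the same route as the paper: existence is read off from Theorem \ref{Thmlim}, the energy identity for an arbitrary weak solution comes from combining the inequality of Lemma \ref{lemmaIneq} with a reverse inequality obtained by testing the Fokker--Planck equation against $\phi^{\sigma}$, and uniqueness is the Lasry--Lions doubling argument whose sign condition you verify by a Hessian computation equivalent to the paper's monotone-operator formulation of $\mathbf{H}_{c}\ge 0$, with matching case analyses for $\beta=2$ and $\beta=0$. The only place the paper is more specific is the reverse energy inequality, where instead of your loosely described mollification it truncates $\phi_{k}=\min(\phi^{\sigma},k)$, invokes Kato's inequality, and uses the renormalized-solution structure $S_{n}(\rho^{\sigma})$ from Porretta before letting $n\to\infty$ and then $k\to\infty$ --- a technical device you would need to adopt since $\phi^{\sigma}$ is not bounded above.
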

\begin{proof}
By Theorem \ref{Thmlim}, we deduce that $(\rho^{\sigma}, \phi^{\sigma} )$ is a weak solution of  \eqref{EqFluidHd0x}-\eqref{EikonalInx}. Moreover, from Lemma \ref{lemmaIneq}
 \begin{equation}\label{EqX0a}
 \langle\phi^{\sigma}(0),\rho_{0}\rangle \le \int_{\Omega} \phi_{T} \rho^{\sigma}(T)dx  + \int_{0}^{T}\int_{\Omega} \rho\mathcal{H}_{p}(x,\rho^{\sigma},\nabla \phi^{\sigma})\cdot \nabla \phi^{\sigma} -\rho^{\sigma}\mathcal{H}(x,\rho^{\sigma},\nabla \phi^{\sigma})
\end{equation}
Now , let us consider $\phi_{k}:= \min(\phi^{\sigma},k)$. We have $-\partial_{t} \phi^{\sigma} - \sigma \Delta \phi^{\sigma} \in  L^{1}(\mathrm{Q}_{T})$ and by Kato’s inequality we obtain 
\begin{equation}
-\partial_{t} \phi_{k} - \sigma \Delta \phi_{k} + \mathcal{H}(x,\rho^{\sigma}_{\epsilon},\nabla \phi^{\sigma}_{\epsilon}) 1_{\{\phi^{\sigma}<k \}} \ge 0.
\end{equation}
Since, $\partial_{t}\rho^{\sigma} - \sigma \Delta \rho^{\sigma} - \div(\rho^{\sigma}b ) =0$ for some $b$ such that $\rho^{\sigma}|b|^2 \in  L^{1}(\mathrm{Q}_{T})$, by \cite[Theorem 3.6]{porretta2015weak}, we have $\rho^{\sigma} \in C^{0}(0,T;L^{1}(\Omega))$ is also a normalized solution and we have 
\[
\partial_{t}S_{n}(\rho^{\sigma}) - \sigma \Delta S_{n}(\rho^{\sigma}) - div(S^{'}_{n}(\rho^{\sigma})\rho^{\sigma}\mathcal{H}_{p}(x,\rho^{\sigma},\nabla \phi^{\sigma} )= R_{n}
\]
where $S_{n}(\rho^{\sigma})$ is a suitable $C^{1}$ truncation and $R_{n} \to 0$ in $L^{1}(\mathrm{Q}_{T})$, for more details see \cite{porretta2015weak}. Now, we multiply the equation of $S_{n}(\rho^{\sigma}) $ by $\phi_{k}$ we get 
 \begin{equation}
\begin{aligned}
\int_{\Omega} S_{n}(\rho_{0})\phi_{k}(0) dx - & \int_{\Omega} S_{n}(\rho^{\sigma}(T))\phi_{k}(T) dx \ge - \int_{0}^{T}\int_{\Omega} R_{n} \phi_{k} dx dt \\ & \int_{0}^{T}\int_{\Omega} \left(S^{'}_{n}(\rho^{\sigma})\rho^{\sigma}\mathcal{H}_{p}(x,\rho^{\sigma},\nabla \phi^{\sigma})\cdot \nabla \phi^{\sigma} -S_{n}(\rho^{\sigma})\mathcal{H}(x,\rho^{\sigma},\nabla \phi^{\sigma})\right)\mathrm{1}_{\{\phi^{\sigma}<k \}} dx dt 
\end{aligned}
\end{equation}
We have $\phi_{k}$ is bounded then the first term in the right-hand side vanishes as $n \to +\infty$. Then we pass to the limit as  $n \to +\infty$, we obtain 
 \begin{equation}
\begin{aligned}
\int_{\Omega} \rho_{0}\phi_{k}(0) dx - & \int_{\Omega} \rho^{\sigma}(T)\phi_{k}(T) dx \\ & \ge \int_{0}^{T}\int_{\Omega} \left(\rho^{\sigma}\mathcal{H}_{p}(x,\rho^{\sigma},\nabla \phi^{\sigma})\cdot \nabla \phi^{\sigma} -\rho^{\sigma}\mathcal{H}(x,\rho^{\sigma},\nabla \phi^{\sigma})\right) \mathrm{1}_{\{\phi^{\sigma}<k \}}  dx dt 
\end{aligned}
\end{equation}
Finally, by letting $k \to +\infty$  and with \eqref{EqX0a} we deduce that the energy  identity \eqref{Eqenerg} remains true.

We now process for proving the uniqueness results. Let $(\rho^{}, \phi)$ and  $(\tilde{\rho}, \tilde{\phi})$ be two solution of \eqref{EqFluidHd0x}-\eqref{EikonalInx} both solutions satisfy the energy identity \eqref{Eqenerg}. 

First, using the inequality \eqref{EqX0}  from Lemma \ref{lemmaIneq} we get 
 \begin{equation*}
\langle\phi(0),\rho_{0}\rangle \le \langle\tilde{\rho}(T),\phi_{T}\rangle + \int_{0}^{T}\int_{\Omega} \tilde{\rho}\mathcal{H}_{p}(x,\tilde{\rho},\nabla \tilde{\phi})\cdot \nabla \phi -\tilde{\rho}\mathcal{H}(x,\rho,\nabla \phi)
\end{equation*}
 \begin{equation*}
 \langle\tilde{\phi}(0),\rho_{0}\rangle \le \langle\rho(T),\phi_{T}\rangle + \int_{0}^{T}\int_{\Omega} \rho\mathcal{H}_{p}(x,\rho,\nabla \phi)\cdot \nabla \tilde{\phi} -\rho\mathcal{H}(x,\tilde{\rho},\nabla \tilde{\phi})
\end{equation*}
Moreover, by the energy identity \eqref{Eqenerg} we obtain 
 \begin{equation*}
 \langle\phi(0),\rho_{0}\rangle =\langle\rho(T),\phi_{T}\rangle + \int_{0}^{T}\int_{\Omega} \rho\mathcal{H}_{p}(x,\rho,\nabla \phi)\cdot \nabla \phi -\rho\mathcal{H}(x,\rho,\nabla \phi)
\end{equation*}
 \begin{equation*}
\langle\tilde{\phi}(0),\rho_{0}\rangle =\langle\tilde{\rho}(T),\phi_{T}\rangle + \int_{0}^{T}\int_{\Omega} \tilde{\rho}\mathcal{H}_{p}(x,\tilde{\rho},\nabla \tilde{\phi})\cdot \nabla \tilde{\phi} -\rho\mathcal{H}(x,\tilde{\rho},\nabla \tilde{\phi}),
\end{equation*}
and therefore we get 
 \begin{equation}
 \begin{aligned}
\int_{0}^{T}\int_{\Omega} &\tilde{\rho}\left(\mathcal{H}(x,\rho,\nabla \phi)-\mathcal{H}(x,\tilde{\rho},\nabla \tilde{\phi})-\mathcal{H}_{p}(x,\tilde{\rho},\nabla \tilde{\phi})\cdot \left(\nabla \phi-\nabla \tilde{\phi}\right) \right) dx dt\\
+&\int_{0}^{T}\int_{\Omega} \rho\left(\mathcal{H}(x,\tilde{\rho},\nabla \tilde{\phi})-\mathcal{H}(x,\rho,\nabla \phi )-\mathcal{H}_{p}(x,\rho,\nabla \phi)\cdot \left(\nabla \tilde{\phi}-\nabla \phi\right) \right) dx dt =0.
\end{aligned}
\end{equation}
Let us define 
 \begin{equation}
 \begin{aligned}
E(x,\rho_{1},p_{1},\rho_{2},p_{2}) =&-\left( \mathcal{H}(x,\rho_{1}, p_{1}) - \mathcal{H}(x,\rho_{2}, p_{2})\right)\left( \rho_{1} - \rho_{2}\right) \\
& +\left( \rho_{1} \mathcal{H}_{p}(x,\rho_{1}, p_{1}) -\rho_{2} \mathcal{H}_{p}(x,\rho_{2}, p_{2}) \right)\left(p_{1} - p_{2}\right)\\
&=\begin{pmatrix} \rho_{2}- \rho_{1}\\ p_{2}- p_{1}
 \end{pmatrix}\cdot \begin{pmatrix} \mathcal{H}(x,\rho_{1}, p_{1}) - \mathcal{H}(x,\rho_{1}, p_{1})\\  \rho_{2} \mathcal{H}_{p}(x,\rho_{2}, p_{2}) -\rho_{1} \mathcal{H}_{p}(x,\rho_{1}, p_{1})
 \end{pmatrix}\\&=\begin{pmatrix} \rho_{2}- \rho_{1}\\ p_{2}- p_{1}
 \end{pmatrix}\cdot \left( \Theta\begin{pmatrix} \rho_{2}\\ p_{2} \end{pmatrix} -\Theta \begin{pmatrix} \rho_{1}\\ p_{1} \end{pmatrix}\right)
\end{aligned}
\end{equation}
with $\Theta\begin{pmatrix} \rho\\ p \end{pmatrix} =\begin{pmatrix} - \mathcal{H}(x,\rho, p)\\ \rho \mathcal{H}_{p}(x,\rho, p) \end{pmatrix}$. Therefore, the positivity  of $E$ is guaranteed if and only if $\Theta$  monotone which is equivalent to \eqref{Ourassumption}. The preceding monotonicity condition aligns with the condition \eqref{uniq}, which is also the  Lasry–Lions monotonicity condition used by P.L. Lions in his  work on solution uniqueness, as demonstrated in \cite{Lions2023}.

 In addition, assuming that all the following differentiations are allowed, we see that the Hessian 
  \begin{equation*}
 \mathbf{H}_{c}=\begin{pmatrix}
-\frac{2}{\rho}\mathcal{H}_{\rho}(x,\rho^{\sigma}, p)& \mathcal{H}_{\rho,p}^{T}( x,\rho^{\sigma}, p )\\
\mathcal{H}_{\rho,p}( x,\rho^{\sigma}, p )& 2\mathcal{H}_{p,p}( x,\rho^{\sigma}, p )\\
\end{pmatrix} 
\end{equation*}
and 
 $\mathcal{H}_{\rho}(x,\rho,  p ) =- \frac{\beta}{2} \left(\rho_{m}-\rho\right)^{\beta-1}p^2 + \frac{2-\beta}{2}\left(\rho_{m}-\rho \right)^{1-\beta}$, $
\mathcal{H}_{p}(x,\rho, p ) = \left(\rho_{m}-\rho\right)^{\beta}p$, $\mathcal{H}_{\rho,p}(x,\rho, p ) =-\beta \left(\rho_{m}-\rho\right)^{\beta-1}p$ and $\mathcal{H}_{p,p}(x,\rho,p ) =  \left(\rho_{m}-\rho\right)^{\beta}$. 

For $\rho>0$, the matrix $\mathbf{H}_{c}$ is given by 
\[
\mathbf{H}_{c}=\begin{pmatrix}
-\frac{2}{\rho}\left( - \frac{\beta}{2} \left(\rho_{m}-\rho\right)^{\beta-1}p^2 + \frac{2-\beta}{2}\left(\rho_{m}-\rho \right)^{1-\beta} \right)& -\beta \left(\rho_{m}-\rho\right)^{\beta-1}p\\
-\beta \left(\rho_{m}-\rho\right)^{\beta-1}p)&  2 \left(\rho_{m}-\rho\right)^{\beta}\
\end{pmatrix}.
\]
Taking $z=(z_{1},z_{2}) \in \mathbb{R}^{2}$. Therefore, the matrix $\mathbf{H}_{c}$ is positive semi-definite  if and only if 
\[
z^{T}\mathbf{H}_{c}z = -\frac{2}{\rho}\left( - \frac{\beta}{2} \left(\rho_{m}-\rho\right)^{\beta-1}p^2 + \frac{2-\beta}{2}\left(\rho_{m}-\rho \right)^{1-\beta} \right) z_{1}^{2} -2\beta \left(\rho_{m}-\rho\right)^{\beta-1}p z_{1}z_{2} +2 \left(\rho_{m}-\rho\right)^{\beta}z_{2}^{2} \ge 0.
\]

In particular, when we consider the case of $\beta =2$, we have 
\begin{equation}
\begin{aligned}
z^{T}\mathbf{H}_{c} z = &\frac{2}{\rho}\left( \left(\rho_{m}-\rho\right)p^2  \right) z_{1}^{2} -4 \left(\rho_{m}-\rho\right)p z_{1}z_{2} +2 \left(\rho_{m}-\rho\right)^{2}z_{2}^{2}  \\
 = &\frac{2}{\rho} \left(\rho_{m}-\rho\right)p^2  z_{1}^{2} -2p^{2} z_{1}^{2} +2\left(p z_{1} -\left(\rho_{m}-\rho\right)z_{2}\right)^{2}\\
 = &2p^{2} z_{1}^{2}\left(\frac{\left(\rho_{m}-\rho\right)}{\rho} -1\right) +2\left(p z_{1} -\left(\rho_{m}-\rho\right)z_{2}\right)^{2}
\end{aligned}
\end{equation}
Consequently, the assumption $0 <\rho \le \frac{\rho_{m}}{2}$ is a sufficient condition to  guarantee that  the matrix $\mathbf{H}_{c}$ is positive semi-definite. Now, when we consider the case of $\beta =0$, we have 
\[
z^{T}\mathbf{H}_{c} z = -\frac{2}{\rho}\left(\rho_{m}-\rho  \right) z_{1}^{2} +2 z_{2}^{2},
\]
which is not positive for all $z \in \mathbb{R}^{2}$. Finally, we conclude with the desired uniqueness results.
\end{proof}
\begin{remark}
The Larsy-Lions monotonicity condition suggests that pedestrians tend to avoid densely populated areas and prefer to be more spread out. This is evident in our model when $\beta = 2$, as pedestrians actively avoid high-density regions. The condition $0 < \rho \le \frac{\rho_{m}}{2}$ is essential for the uniqueness of the solution, which is logical as people tend to distribute themselves more evenly when an area is close to half its capacity. This condition not only ensures that the solution is unique, but also that it is stable against perturbations, as discussed in \cite{GMP2023preprint}.
\end{remark}

\subsection{Vanishing viscosity limit }
In this section, we will analyze the vanishing viscosity limit of weak solutions. Here, we will utilize the connection between the weak sub-solution for the HJB equation and the weak solution for the Kolmogorov-type equation. The analysis in this section was inspired by \cite[Section 1.3.7, p.83]{achdou2020introduction}
\begin{e-definition}
Under the assumptions \text{{\bf (AS)}}, Given $h\in L^{1}(\mathrm{Q}_{T})$, and $\rho\in L^{1}(\mathrm{Q}_{T})$. A function $\phi \in L^{\infty}(0,T;L^{1}(\Omega)) \cap L^{q}(0,T;W^{1,q}(\Omega))$ for every $q<\frac{4}{3}$, is a weak sub-solution  of 
\begin{align}
-\partial_t\phi  -   \mathcal{H}(x,\rho,\nabla \phi) =h, & \label{HJBEq0}\\
\phi(t=T,x)= \phi_{T}(x),& \label{HJBEqb}
\end{align}
if it satisfies 
\begin{equation}
\begin{aligned}
\int_{0}^{T}\int_{\Omega} \phi \partial_{t}\varphi dx dt + \int_{0}^{T}\int_{\Omega}  \mathcal{H}(x,\rho,\nabla \phi) \varphi dx dt  \le & \int_{0}^{T}\int_{\Omega}  h \varphi dx dt +\int_{\Omega} \phi_{T}(x)\varphi(T) dx \\& \forall  \varphi \in C^{1}_{c}((0,T]\times \Omega),\,\,\, \varphi \ge 0
\end{aligned}
\end{equation}
We will shortly denotes   $-\partial_t\phi  + \mathcal{H}(x,\rho,\nabla \phi) \le h $ and $\phi(T) \le  \phi_{T}$, the definition of a weak sub-solution.
\end{e-definition}
Now we state the existence of a weak solution for the Kolmogorov-type equation. We denote by $\mathcal{P}(\Omega)$ the set of Borel probability measures on $\Omega$.
\begin{e-definition}
Let $\rho_{0} \in \mathcal{P}(\Omega)$, Given a measurable vector field $b : \mathrm{Q}_{T} \longrightarrow \mathbb{R}$ , a function $\rho \in  L^{1}(\mathrm{Q}_{T})$, is a weak solution of 
\begin{equation}\label{contEq0}
\begin{cases}
\partial_t \rho  -   div(\rho b) =0, & \\\
\rho(t=0,x)= \rho_{0}(x),&
\end{cases}
\end{equation}
if  $\rho \in C^{0}([0,T]; \mathcal{P}(\Omega))$, $\int_{0}^{T}\int_{\Omega}\rho|b|^{2} dx dt <\infty$ and we have 
\begin{equation}\label{WeakfomulationA}
-\int_{0}^{T}\int_{\Omega}\rho \partial_{t}\varphi dx dt + \int_{0}^{T}\int_{\Omega} \rho b\cdot \nabla\varphi dx dt = \int_{\Omega}\rho_{0}\varphi(0) \quad \forall  \varphi \in C^{1}_{c}([0,T)\times \Omega).
\end{equation}
\end{e-definition}
Now, we give a definition of the weak solution for the first-order MFGs system. 

\begin{e-definition} \label{WeakSubSolSystem}
Under the assumptions \text{{\bf (AS)}}, a  pair $(\rho, \phi) \in L^{1}(\mathrm{Q}_{T})\times L^{\infty}(0,T;L^{1}(\Omega))$ is a weak solution to the following  first-order MFG system 
\begin{align}
\partial_t\rho  - \nabla \cdot \left( \rho \mathcal{H}_{p}(x, \rho,\nabla \phi)\right)  = 0, &\qquad (t,x)\in\mathrm{Q}_{T}  \label{EqFluidHd0sb}\\ 
\rho(t=0,x)= \rho_{0}(x),&\quad x\in\Omega  \label{FluidInsb}
\end{align}
\begin{align}
-\partial_t\phi  + \mathcal{H}(x, \rho,\nabla \phi ) =0, &\qquad (t,x)\in\mathrm{Q}_{T}  \label{EqEikonalHd0sb}\\
\phi(t=T,x)= \phi_{T}(x),&\quad x\in\Omega \label{EikonalInsb}
\end{align}
if 
\begin{description}
\item[ (i)]  $\phi_{T}\rho(T) \in  L^{1}(\mathrm{Q}_{T})$, $\rho f^{\beta}(\rho) |\nabla \phi|^{2} \in L^{1}(\mathrm{Q}_{T})$ and  $f^{\beta}(\rho) |\nabla \phi|^{2} \in L^{1}(\mathrm{Q}_{T})$
\item[ (ii)]  $\phi$ is a weak sub-solution of \eqref{EqEikonalHd0sb}-\eqref{EikonalInsb}, $\rho \in C^{0}(o,T;\mathcal{P}(\Omega))$ is a weak solution  of \eqref{EqFluidHd0sb}-\eqref{FluidInsb}.
\item[ (iii)] $\phi$ and $\rho$ satisfy the following identity
\begin{equation}\label{WeakfomulationF}
\int_{\Omega} \rho_{0}\phi(0) dx =  \int_{\Omega} \rho(T)\phi_{T} dx + \int_{0}^{T}\int_{\Omega} \rho \left(  \mathcal{H}_{p}(x, \rho ,\nabla \phi)\cdot \nabla \phi -\mathcal{H}(x, \rho,\nabla \phi )\right) dx dt ,
\end{equation}
\end{description}
\end{e-definition}

\begin{lemma}
Under assumptions \text{{\bf (AS)}}, let $\phi$ be a weak subsolution of \eqref{HJBEq0}-\eqref{HJBEqb}and $\rho$ be a weak solution of \eqref{contEq0}. Then, we have $\rho |\nabla \phi |^{2} \in L^{1}(\mathrm{Q}_{T})$ and $\rho(0)\phi(0) \in L^{1}(\Omega)$ and 
\begin{equation}\label{EsR}
\int_{\Omega} \rho_{0}\phi(0) dx \le \int_{\Omega} \phi_{T}\rho(T) dx  + \int_{\Omega}\rho \left(-\nabla \phi \cdot b  + \mathcal{H}(x, \rho ,\nabla \phi)\right) dx dt.
\end{equation}
\end{lemma}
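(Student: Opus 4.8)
The goal is an energy-type inequality obtained by pairing the weak subsolution $\phi$ of the Hamilton--Jacobi inequality against the weak solution $\rho$ of the continuity equation. The obstruction is that neither $\phi$ nor $\rho$ is smooth enough to be used directly as a test function in the other equation, and that the products $\rho\,\nabla\phi\cdot b$ and $\rho\,\mathcal H(x,\rho,\nabla\phi)$ must first be shown to be integrable. So the plan is a double regularization argument, exactly in the spirit of Lemma \ref{lemmaIneq}: mollify $\rho$ in space and then in time so that it becomes an admissible (compactly supported near $t=0$, smooth) test function for $\phi$, write the corresponding inequality, transfer the derivatives onto $\phi$ using the (mollified) continuity equation, and then remove the regularizations with Fatou's lemma on the good-sign terms.

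\textbf{Step 1: integrability of $\rho|\nabla\phi|^2$.} From the definition of weak subsolution we have $\phi\in L^\infty(0,T;L^1(\Omega))\cap L^q(0,T;W^{1,q}(\Omega))$ for $q<\tfrac43$, and $\mathcal H(x,\rho,\nabla\phi)=\tfrac12 f^\beta(\rho)|\nabla\phi|^2-\tfrac12 f^{2-\beta}(\rho)$. One first tests the HJB inequality against a nonnegative $\varphi$ that is $\equiv 1$ away from $t=T$, which (since $f^{2-\beta}(\rho)$ is bounded by a power of $\rho_m$) forces $f^\beta(\rho)|\nabla\phi|^2\in L^1(\mathrm Q_T)$; combined with $\rho\le\rho_m$ and $f(\rho)=(\rho_m-\rho)_+$ this gives $\rho f^\beta(\rho)|\nabla\phi|^2\in L^1$, and hence $\rho|\nabla\phi|^2\in L^1(\mathrm Q_T)$ on the set where $f(\rho)$ is bounded below, with the boundary/degeneracy near $\rho=\rho_m$ handled exactly as in Lemma \ref{Lemmaex} (the term $\rho_m-\rho\in L^\infty(0,T;L^2)$). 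In particular the drift contribution $\rho\,\nabla\phi\cdot b$ with $\rho|b|^2\in L^1$ is integrable by Cauchy--Schwarz.

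\textbf{Step 2: the pairing via mollification.} Let $m_\delta$ be a spatial mollifier and set $\rho_\delta=\rho\star m_\delta$, which is smooth and bounded together with $\nabla\rho_\delta$ since $\rho\in L^\infty(0,T;L^1)$; then convolve in time with $\xi_\epsilon$, $\operatorname{supp}\xi_\epsilon\subset(-\epsilon,0)$, to get $\rho_{\delta,\epsilon}$ vanishing at $t=0$. Use $\rho_{\delta,\epsilon}\ge$ (a truncation thereof, to keep it nonnegative) as a test function in the subsolution inequality for $\phi$; then move $\partial_t$ and $\Delta$ off $\rho_{\delta,\epsilon}$ onto a symmetrically mollified $\phi_{\delta,\epsilon}$, and use the weak formulation \eqref{WeakfomulationA} of the continuity equation for $\rho$ to rewrite $(-\partial_t-\sigma\Delta)\phi_{\delta,\epsilon}$ against $\rho_{\delta,\epsilon}$ in terms of $\rho_{\delta,\epsilon}\,b_{\delta,\epsilon}\cdot\nabla\phi_{\delta,\epsilon}$ plus the initial term $\int_\Omega\rho_0\,\phi_{\delta,\epsilon}(0)\,dx$. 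This is the first-order analogue of inequality \eqref{EsA}; here $\sigma$ may be taken $0$ in \eqref{contEq0}, but keeping the argument for general $\sigma\ge 0$ changes nothing since the viscosity terms cancel after the transfer.

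\textbf{Step 3: passing to the limit.} First let $\epsilon\to 0$: the time mollification is removed by dominated/Fatou arguments on $\int_0^T(\mathcal H(x,\rho,\nabla\phi)\rho_\delta-\nabla\phi\cdot w_\delta)\,dx\,dt$ with $w_\delta=(b\rho)\star m_\delta$, using the good sign of $\mathcal H_p\cdot p-\mathcal H$ on the $\liminf$ side, and the trace of $t\mapsto\int_\Omega\phi(t)\rho_\delta\,dx$ at $t=0$ (which exists by Lemma \ref{lemma33}) on the initial term. Then let $\delta\to 0$: $\rho_\delta\to\rho$ and $w_\delta\to b\rho$ in $L^1$, the trace at $t=0$ is continuous in $\delta$ because $\rho_0$ is continuous (assumption \textbf{(AS)}), and the terminal term $\int_\Omega\phi_T\star m_\delta\,\rho_\delta(T)\,dx\to\int_\Omega\phi_T\rho(T)\,dx$ (which in particular shows $\phi_T\rho(T)\in L^1(\Omega)$, using $\phi_T\ge c_0$ to control the sign and the energy bound of Step 1 to control the size). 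Collecting the surviving terms yields
\[
\int_\Omega\rho_0\,\phi(0)\,dx\le\int_\Omega\phi_T\,\rho(T)\,dx+\int_0^T\!\!\int_\Omega\rho\bigl(-\nabla\phi\cdot b+\mathcal H(x,\rho,\nabla\phi)\bigr)\,dx\,dt,
\]
which is \eqref{EsR}. The main obstacle is Step 2--3: making the double mollification commute with the two weak formulations while keeping every term integrable — in particular ensuring the cross term $\nabla\phi\cdot w_\delta$ converges, which is where the bound $\rho|\nabla\phi|^2\in L^1$ from Step 1 together with $\rho|b|^2\in L^1$ is essential — and correctly identifying the $t=0$ trace of $\phi$ as the bounded Radon measure $\phi(0)$ from Lemma \ref{lemma33}.
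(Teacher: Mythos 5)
Your overall architecture --- spatial then temporal mollification of $\rho$, using $\rho_{\delta,\epsilon}$ as a test function in the subsolution inequality for $\phi$, transferring the derivatives through the weak formulation \eqref{WeakfomulationA}, and removing the regularization with Fatou's lemma --- is the same as the paper's. But there is a genuine gap in your Step 1, and it propagates. You claim that testing the HJB inequality with $\varphi\equiv 1$ away from $t=T$ gives $f^{\beta}(\rho)|\nabla\phi|^{2}\in L^{1}(\mathrm{Q}_{T})$ and that this, together with $\rho\le\rho_{m}$, yields $\rho|\nabla\phi|^{2}\in L^{1}(\mathrm{Q}_{T})$. It does not: $f(\rho)=(\rho_{m}-\rho)_{+}$ degenerates as $\rho\to\rho_{m}$, so on the set where $\rho$ is close to $\rho_{m}$ the weight $f^{\beta}(\rho)$ gives no control of $|\nabla\phi|^{2}$, and the bound $\rho_{m}-\rho\in L^{\infty}(0,T;L^{2})$ from Lemma \ref{Lemmaex} (which in any case concerns the viscous approximations, not the first-order pair considered here) does not repair this. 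Since your Step 3 then invokes $\rho|\nabla\phi|^{2}\in L^{1}$ precisely to make the cross term $\nabla\phi\cdot w_{\delta,\epsilon}$ converge, the argument is not closed.

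The paper obtains both the integrability and the control of the cross term by a device you omit: apply Young's inequality $\nabla\phi\cdot w_{\delta,\epsilon}\le\frac{1}{2}\rho_{\delta,\epsilon}|\nabla\phi|^{2}+\frac{1}{2}|w_{\delta,\epsilon}|^{2}/\rho_{\delta,\epsilon}$, and then use the convexity and lower semicontinuity of the function $\Phi(w,\rho)=|w|^{2}/\rho$ defined in \eqref{functionPsi}, together with Jensen's inequality, to dominate $|w_{\delta,\epsilon}|^{2}/\rho_{\delta,\epsilon}$ by $\left(\rho|b|^{2}\right)\star m_{\delta,\epsilon}$, which is bounded in $L^{1}$ by the hypothesis $\int_{0}^{T}\int_{\Omega}\rho|b|^{2}\,dx\,dt<\infty$ built into the definition of a weak solution of \eqref{contEq0}. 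This produces the pointwise lower bound \eqref{QAs}, whose negative part is integrable uniformly in $\delta$ and $\epsilon$; combined with the upper bound coming from the tested subsolution inequality, it is this that yields $\rho|\nabla\phi|^{2}\in L^{1}(\mathrm{Q}_{T})$ as a byproduct and simultaneously legitimizes the application of Fatou's lemma (your appeal to the sign of $\mathcal{H}_{p}\cdot p-\mathcal{H}$ is not what is needed here; that identity is used in the energy-equality lemma, not in this one). In short, the integrability of $\rho|\nabla\phi|^{2}$ is an output of the duality computation, not an input, and Steps 1 and 3 should be restructured accordingly.
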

\begin{proof}
Let $m_{\delta}(.)$ be s sequence of standard symmetric mollifiers in $\mathbb{R}^{2}$ and we set $\rho_{\delta}(t,x)= \rho(t,.)\star m_{\delta}$. We consider a sequence of $1-d$ mollifiers $\xi_{\epsilon}(t)$ such that $\text{supp} (\xi_{\epsilon}) \subset (-\epsilon,0)$, and we set
\[
\rho_{\delta,\epsilon} := \int_{0}^{T}\xi_{\epsilon}(s-t)\rho_{\delta}(s) ds.
\]

We have
\begin{equation}\label{EQS}
\int_{0}^{T}\int_{\Omega}\phi \partial_{t}\rho_{\delta,\epsilon} dx dt + \int_{0}^{T}\int_{\Omega}\mathcal{H}(x, \rho,\nabla \phi ) \rho_{\delta,\epsilon} dx dt \le \int_{\Omega}\phi_{T}\rho_{\delta,\epsilon} dx.
\end{equation}

Let ${\phi_{\delta,\epsilon}(s,y)= \int_{0}^{T}\int_{\Omega} \phi_{\delta,\epsilon}\xi_{\epsilon}(s-t) m_{\delta}(x-y) dx dt  }$, then 
\[
\int_{0}^{T}\int_{\Omega}\phi^{\sigma}\partial_{t}\tilde{\rho}_{\delta,\epsilon} dx dt = \int_{0}^{T}\int_{\Omega} \left(-\partial_{s}\phi_{\delta,\epsilon} \right) \tilde{\rho}^{\sigma}_{\delta,\epsilon} (s,y)dy ds.
\]
We observe that $\phi_{\delta,\epsilon}$ vanishes near $s=T$, so it can be used as a test function in \eqref{WeakfomulationA}. It follows that 
\begin{equation}
\begin{aligned}
\int_{0}^{T}\int_{\Omega}&\phi^{\sigma}\partial_{t}\tilde{\rho}_{\delta,\epsilon} dx dt \\ &= -\int_{0}^{T}\int_{\Omega} \partial_{s}\phi_{\delta,\epsilon} \tilde{\rho}^{\sigma}_{\delta,\epsilon} (s,y)dy ds\\&
=- \int_{0}^{T}\int_{\Omega} \rho(s,y) b(s,y)\cdot \nabla\phi_{\delta,\epsilon} dx dt + \int_{\Omega}\rho_{0}(y)\phi_{\delta,\epsilon}(0)
\end{aligned}
\end{equation}
Now we shift the convolution kernels from $\phi$ to $\rho$ and we use it in \eqref{EQS}, we get 
\begin{equation}\label{QA}
\begin{aligned}
-\int_{0}^{T}\int_{\Omega}\nabla \phi \cdot w_{\delta,\epsilon} dx dt &+ \int_{0}^{T}\int_{\Omega}  \mathcal{H}(x, \rho ,\nabla \phi)\rho_{\delta,\epsilon} dx dt + \int_{\Omega} \rho_{0}\star m_{\delta}\left(\int_{0}^{T}\phi(t)\xi_{\epsilon}(-t) dt\right) dx \\& \le \int_{\Omega}\phi_{T}\rho_{\delta,\epsilon}(T) dx
\end{aligned}
\end{equation}
where we denote $w_{\delta}= [(b\rho)\star m_{\delta}]$ and  ${w_{\delta,\epsilon}(s,y)= \int_{0}^{T}w_{\delta}(x)\xi_{\epsilon}(s-t) dt }$.

Since $\phi$ is bounded below, we have 
\[
\liminf_{\delta \to 0}\liminf_{\epsilon \to 0} \int_{\Omega}\rho_{0} \star m_{\delta} \left(\phi(t) \xi_{\epsilon}(-t) dx \right) dx \ge \int_{\Omega} \rho_{0}\phi(0) dx
\]
Moreover we have at $t=T$
\begin{equation}\label{EQestimate0}
\lim_{\epsilon \to 0}\int_{\Omega} \phi_{T}\rho_{\delta,\epsilon}(T) dx =\int_{\Omega} \phi_{T}\rho_{\delta}(T)\star m_{\delta} dx 
\end{equation}

Then, we see that $\phi_{T}$ is bounded below, and then we deduce $|\phi_{T}\rho_{\delta}(T)\star m_{\delta}|$ is dominated in $L^{1}(\Omega)$. Therefore, we have 
\begin{equation}\label{EQestimate}
\lim_{\delta \to 0}\lim_{\epsilon \to 0} \int_{\Omega}\phi_{T}\rho_{\delta,\epsilon}(T) dx =\int_{\Omega} \phi_{T}\rho(T) dx 
\end{equation}

We know that  
\[
\rho_{\delta,\epsilon}\mathcal{H}(x, \rho ,\nabla \phi) = \rho_{\delta,\epsilon}\left(\frac{1}{2}f^{\beta}(\rho)|\nabla \phi|^{2} -\frac{1}{2}f^{2-\beta}(\rho) \right)
\]
and by Young inequality we get
\[
\nabla \phi \cdot w_{\delta,\epsilon} \le \frac{1}{2}\rho_{\delta,\epsilon} |\nabla \phi |^{2} + \frac{1}{2}\frac{|w_{\delta,\epsilon}|^{2}}{\rho_{\delta,\epsilon}},
\]
then 
\[
-\nabla \phi \cdot w_{\delta,\epsilon} + \mathcal{H}(x, \rho ,\nabla \phi) \rho_{\delta,\epsilon} \ge \frac{1}{2}\rho_{\delta,\epsilon}f^{\beta}(\rho) |\nabla \phi |^{2} - \frac{1}{2}\rho_{\delta,\epsilon}f^{2-\beta}(\rho) - \frac{1}{2}\rho_{\delta,\epsilon} |\nabla \phi |^{2} - \frac{1}{2}\frac{|w_{\delta,\epsilon}|^{2}}{\rho_{\delta,\epsilon}}
\]
 Now we introduce the lower semi-continuous function $\Phi$ on $\mathbb{R}^{2}\times\mathbb{R}$ by 
\begin{equation}\label{functionPsi}
\Phi(w,\rho) = \begin{cases}
\frac{|w|^{2}}{\rho} & \text{if}\,\, \rho>0 \\
0  & \text{if}\,\, \rho =0 \,\, \text{and }\, w=0 \\
+\infty \,\,\,\,\,\,\quad \text{otherwise.}
\end{cases}
\end{equation}
We observe that $\Phi $ is convex in the couple $(w,\rho)$. Then ,by Jensen inequality we have ${\frac{|w_{\delta,\epsilon}|^{2}}{\rho_{\delta,\epsilon}} \le  \frac{|w|^{2}}{\rho}} \star m_{\delta,\epsilon}$ ( recalling that $w=b\rho$ hence  $\frac{|w|^{2}}{\rho}= \rho|b|^{2}$), the we deduce that 

\begin{equation}\label{QAs}
-\nabla \phi \cdot w_{\delta,\epsilon} + \mathcal{H}(x, \rho ,\nabla \phi) \rho_{\delta,\epsilon} \ge \frac{1}{2}\rho_{\delta,\epsilon}f^{\beta}(\rho) |\nabla \phi |^{2} - \frac{1}{2}\rho_{\delta,\epsilon}f^{2-\beta}(\rho) - \frac{1}{2}\rho_{\delta,\epsilon} |\nabla \phi |^{2} - \frac{1}{2}\left(\rho|b|^{2}\star m_{\delta,\epsilon}\right)
\end{equation}

From \eqref{QA}-\eqref{EQestimate} we will use Fatou's Lemma we get 
\begin{equation}\label{QA}
\begin{aligned}
\liminf_{\delta \to 0}\liminf_{\epsilon \to 0} & \int_{\Omega}\left(-\nabla \phi \cdot w_{\delta,\epsilon} + \mathcal{H}(x, \rho ,\nabla \phi)\rho_{\delta,\epsilon}\right) dx dt \\& \ge \int_{\Omega}\rho \left(-\nabla \phi \cdot b  + \mathcal{H}(x, \rho ,\nabla \phi)\right) dx dt  
\end{aligned}
\end{equation}
We deduce from \eqref{QAs} that $\rho |\nabla \phi |^{2} \in L^{1}(\mathrm{Q}_{T})$. Then, we obtain \eqref{EsR} by combining  \eqref{EQestimate0} , \eqref{EQestimate} and \eqref{QA}.

\end{proof}

\begin{thm}
Under the assumptions \text{{\bf (AS)}}, let $\left(\rho^{\sigma}, \phi^{\sigma}\right)$ be a weak solution of \eqref{EqFluidHd0x}-\eqref{EikonalInx}. Then, there exists a subsequence and a couple $\left(\rho, \phi\right) \in L^{1}(\mathrm{Q}_{T})\times L^{\infty}(0,T;L^{1}(\Omega))$ such that  $\left(\rho^{\sigma}, \phi^{\sigma}\right) \to \left(\rho, \phi\right)$ in $L^{1}(\mathrm{Q}_{T})$ and $\left(\rho, \phi\right)$ is a weak solution to \eqref{EqFluidHd0sb}-\eqref{EikonalInsb} in the sense of Definiton \ref{WeakSubSolSystem}. Moreover, under the condition \eqref{Ourassumption}, system \eqref{EqFluidHd0sb}-\eqref{EikonalInsb} admits a unique weak solution. In particular,  
 \begin{enumerate}
\item  If $\beta=2$ and ${\rho \le \frac{\rho_{m}}{2}}$, there exists a unique weak solution of the system \eqref{EqFluidHd0sb}-\eqref{EikonalInsb}.
\item If $\beta=0$, the monotonicity condition \eqref{Ourassumption} is not fulfilled.
\end{enumerate}
\end{thm}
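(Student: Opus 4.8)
The idea is to run a vanishing-viscosity argument in the spirit of \cite[Section 1.3.7]{achdou2020introduction} on the a priori bounds of Section \ref{sec4}, exploiting the crucial fact that the energy estimate \eqref{EstimateAlim} and the energy identity \eqref{Eqenerg} are \emph{independent of} $\sigma$. First I would record the $\sigma$-uniform estimates: arguing as in Lemma \ref{Lemmaex} and Proposition \ref{propoexun} (integrate \eqref{EqEikonalHd0x} in space--time and combine with \eqref{Eqenerg}) every weak solution $(\rho^\sigma,\phi^\sigma)$ satisfies $0\le\rho^\sigma\le\rho_m$ a.e., $\phi^\sigma\ge c_0+c_0(T-t)$, $\|\phi^\sigma\|_{L^\infty(0,T;L^1(\Omega))}\le C$, $\int_\Omega\phi_T\rho^\sigma(T)\,dx\le C$, and \eqref{EstimateAlim} with $C$ depending only on $\phi_T,\rho_m,\beta,\rho_0$; mass conservation on $\Omega=\mathbb T^2$ gives $\rho^\sigma(t)\in\mathcal P(\Omega)$. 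From these I deduce, uniformly in $\sigma$: $\mathcal H(x,\rho^\sigma,\nabla\phi^\sigma)=\tfrac12 f^\beta(\rho^\sigma)|\nabla\phi^\sigma|^2-\tfrac12 f^{2-\beta}(\rho^\sigma)$ bounded in $L^1(\mathrm Q_T)$; $\rho^\sigma\mathcal H_p(x,\rho^\sigma,\nabla\phi^\sigma)=\rho^\sigma f^\beta(\rho^\sigma)\nabla\phi^\sigma$ bounded in $L^2(\mathrm Q_T)$, since $|\rho^\sigma\mathcal H_p|^2\le\rho_m^{1+\beta}\,\rho^\sigma f^\beta(\rho^\sigma)|\nabla\phi^\sigma|^2$; and $\sigma\nabla\rho^\sigma$ bounded in $L^2(\mathrm Q_T)$, so that $\sigma\Delta\rho^\sigma\to0$ and $\sigma\Delta\phi^\sigma\to0$ in $\mathcal D'(\mathrm Q_T)$.

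Next I would extract a subsequence and pass to the limit. The $L^2$ bound on $\rho^\sigma\mathcal H_p$ makes $\partial_t\rho^\sigma=\nabla\cdot(\rho^\sigma\mathcal H_p)+\sigma\Delta\rho^\sigma$ bounded in $L^1(0,T;W^{-1,r}(\Omega))$ for $r<2$, which together with $0\le\rho^\sigma\le\rho_m$ and the compactness machinery of \cite[Theorem 6.1]{porretta2015weak} (adapted as in \cite{achdou2018mean}) gives $\rho^\sigma\to\rho$ in $C([0,T];\mathcal P(\Omega))\cap L^1(\mathrm Q_T)$ and a.e., with $\rho^\sigma(T)\rightharpoonup^*\rho(T)$. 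For $\phi^\sigma$, the $\sigma$-uniform $L^1$ control on $\mathcal H(x,\rho^\sigma,\nabla\phi^\sigma)$ together with the parabolic compactness of \cite{boccardo1997nonlinear,cardaliaguet2015second,achdou2018mean} gives $\phi^\sigma\to\phi$ in $L^1(\mathrm Q_T)$ with $\phi\in L^\infty(0,T;L^1(\Omega))\cap L^q(0,T;W^{1,q}(\Omega))$, $q<4/3$, and $\nabla\phi^\sigma\to\nabla\phi$ a.e.; Lemma \ref{lemmaA} still yields $\phi^\sigma(0)\rightharpoonup^*\bar\phi_0$ with $\bar\phi_0\le\phi(0)$. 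I then check each item of Definition \ref{WeakSubSolSystem}: in \eqref{weakphi} the term $\sigma\int_0^T\!\int_\Omega\phi^\sigma\Delta\varphi\to0$, and for $\varphi\ge0$, Fatou's lemma on the nonnegative leading term $\tfrac12 f^\beta(\rho^\sigma)|\nabla\phi^\sigma|^2\varphi$ and dominated convergence on $f^{2-\beta}(\rho^\sigma)\varphi$ give $\liminf_\sigma\int\!\int\mathcal H(x,\rho^\sigma,\nabla\phi^\sigma)\varphi\ge\int\!\int\mathcal H(x,\rho,\nabla\phi)\varphi$, while the terminal datum passes by Lemma \ref{lemmaA}; hence $\phi$ is a weak sub-solution of \eqref{EqEikonalHd0sb}--\eqref{EikonalInsb}. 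Since $\rho^\sigma\mathcal H_p(x,\rho^\sigma,\nabla\phi^\sigma)$ is bounded in $L^2(\mathrm Q_T)$ (hence equi-integrable) and converges a.e. to $\rho\mathcal H_p(x,\rho,\nabla\phi)$, Vitali's theorem upgrades this to $L^1(\mathrm Q_T)$ convergence, and letting $\sigma\to0$ in \eqref{weakrho} shows $\rho\in C^0([0,T];\mathcal P(\Omega))$ solves \eqref{EqFluidHd0sb}--\eqref{FluidInsb}, with $\int\!\int\rho|\mathcal H_p|^2\le\rho_m^{1+\beta}\int\!\int f^\beta(\rho)|\nabla\phi|^2<\infty$; item (i) follows from \eqref{EstimateAlim} and Fatou ($\phi_T$ bounded below for $\phi_T\rho(T)\in L^1$).

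For the energy identity \eqref{WeakfomulationF} I would combine two opposite inequalities. The lemma preceding this theorem, applied with the sub-solution $\phi$, the density $\rho$ and the drift $b=\mathcal H_p(x,\rho,\nabla\phi)$ (admissible since $\rho|b|^2\in L^1$), gives
\begin{equation*}
\langle\rho_0,\phi(0)\rangle\le\int_\Omega\phi_T\rho(T)\,dx+\int_0^T\!\!\int_\Omega\rho\big(\mathcal H_p(x,\rho,\nabla\phi)\cdot\nabla\phi-\mathcal H(x,\rho,\nabla\phi)\big)\,dx\,dt,
\end{equation*}
whereas passing to the $\liminf$ in \eqref{Eqenerg} — using $\langle\phi^\sigma(0),\rho_0\rangle\to\langle\bar\phi_0,\rho_0\rangle\le\langle\phi(0),\rho_0\rangle$, $\langle\rho^\sigma(T),\phi_T\rangle\to\langle\rho(T),\phi_T\rangle$ ($\phi_T\in C(\Omega)$), and Fatou on the nonnegative integrand $\rho^\sigma\big(\tfrac12 f^\beta(\rho^\sigma)|\nabla\phi^\sigma|^2+\tfrac12 f^{2-\beta}(\rho^\sigma)\big)$ — yields the reverse inequality, so equality holds. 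Uniqueness is then exactly the argument of Proposition \ref{propoexun}: for two weak solutions $(\rho,\phi)$, $(\tilde\rho,\tilde\phi)$ both obeying \eqref{WeakfomulationF}, a straightforward cross-application of the same lemma (to $\phi$ against the drift $\mathcal H_p(x,\tilde\rho,\nabla\tilde\phi)$, and symmetrically) produces two cross-inequalities whose sum cancels the initial and terminal terms and leaves $\int_0^T\!\int_\Omega E(x,\rho,\nabla\phi,\tilde\rho,\nabla\tilde\phi)\,dx\,dt\le0$, with $E$ the quantity of Proposition \ref{propoexun} whose positivity is equivalent to \eqref{Ourassumption}; hence $E\equiv0$ a.e., forcing $\nabla\phi=\nabla\tilde\phi$ a.e. on $\{\rho>0\}\cup\{\tilde\rho>0\}$, then $\rho=\tilde\rho$ by uniqueness for the continuity equation with the common drift, then $\phi=\tilde\phi$. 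The two special cases are the explicit computations already done there: for $\beta=2$ with $0<\rho\le\rho_m/2$ one has $z^T\mathbf H_c z=2p^2z_1^2\big(\tfrac{\rho_m-\rho}{\rho}-1\big)+2\big(pz_1-(\rho_m-\rho)z_2\big)^2\ge0$, while for $\beta=0$, $z^T\mathbf H_c z=-\tfrac2\rho(\rho_m-\rho)z_1^2+2z_2^2$ is indefinite, so \eqref{Ourassumption} fails.

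\textbf{Main obstacle.} The real difficulty is the compactness step: the diffusion in the HJB equation degenerates as $\sigma\to0$, so the $L^1$-data parabolic estimates of \cite{boccardo1997nonlinear} are \emph{not} uniform in $\sigma$ and the argument of Theorem \ref{Thmlim} cannot be reused verbatim. Extracting the a.e. convergence of $\nabla\phi^\sigma$ — and the strong $L^1$ convergence of $\rho^\sigma$, since the flux $\rho^\sigma f^\beta(\rho^\sigma)\nabla\phi^\sigma$ is a genuinely nonlinear function of $\rho^\sigma$ — is where the weak-solution/renormalization and compensated-compactness techniques of \cite{porretta2015weak,cardaliaguet2015second,achdou2018mean} are essential; once those convergences are granted, the remainder is a stability argument built on the algebra already established for the viscous problem.
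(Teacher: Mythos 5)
Your overall skeleton (two opposite energy inequalities to force \eqref{WeakfomulationF}, then uniqueness by the cross-duality argument and the matrix \eqref{Ourassumption} exactly as in Proposition \ref{propoexun}) matches the paper, but there is a genuine gap at the heart of your compactness step, and you essentially admit it yourself in your ``main obstacle'' paragraph. You pass to the limit in the flux by claiming $\nabla\phi^{\sigma}\to\nabla\phi$ almost everywhere and then invoking Vitali to get $\rho^{\sigma}\mathcal{H}_{p}(x,\rho^{\sigma},\nabla\phi^{\sigma})\to\rho\,\mathcal{H}_{p}(x,\rho,\nabla\phi)$ in $L^{1}(\mathrm{Q}_{T})$. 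But the a.e.\ convergence of the gradients is precisely what is \emph{not} available here: the Boccardo--Gall\"ou\"et-type estimates used in Theorem \ref{Thmlim} rely on the uniform ellipticity of $-\sigma\Delta$ and degenerate as $\sigma\to0$, and deferring the issue to ``renormalization and compensated-compactness techniques'' of the cited references is not a proof. Without that convergence your identification of the limiting drift collapses, and with it the continuity equation for $\rho$, item (i) of Definition \ref{WeakSubSolSystem}, and one side of the energy identity.

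The paper's proof is structured specifically to avoid this. Only the \emph{weak} $L^{1}$ limit $w$ of the equi-integrable fluxes $\rho^{\sigma}\mathcal{H}_{p}(x,\rho^{\sigma},\nabla\phi^{\sigma})$ is used: one shows $\Phi(w,\rho)\in L^{1}(\mathrm{Q}_{T})$ with $\Phi$ as in \eqref{functionPsi} (convexity of $\Phi$ plus lower semicontinuity), sets $b:=\frac{w}{\rho}\mathbf{1}_{\{\rho>0\}}$, and obtains the continuity equation with the \emph{a priori unidentified} drift $b$. The subsolution property of $\phi$ follows from convexity of $p\mapsto\mathcal{H}(x,\rho,p)$ and weak lower semicontinuity alone. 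The identification $w=\rho\,\mathcal{H}_{p}(x,\rho,\nabla\phi)$ is then recovered \emph{a posteriori}: comparing the $\limsup$ of the viscous energy identity \eqref{Eqenerg} with the duality inequality \eqref{Myeq} and using the Legendre transform \eqref{EqHamilY} forces
\begin{equation*}
\int_{0}^{T}\!\!\int_{\Omega}\rho\left(\frac{w}{\rho}\cdot\nabla\phi-\mathcal{H}\bigl(x,\rho,\nabla\phi\bigr)-\mathcal{H}^{\ast}\bigl(x,\rho,\tfrac{w}{\rho}\bigr)\right)dx\,dt=0,
\end{equation*}
and the equality case in the Fenchel--Young inequality yields $w=\rho f^{\beta}(\rho)\nabla\phi$ a.e. If you want to keep your route, you must supply an actual proof of the a.e.\ (or strong $L^{1}$) convergence of $\nabla\phi^{\sigma}$ that is uniform in $\sigma$; otherwise the duality identification above is the missing idea you need to adopt. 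The remaining pieces of your argument (the $\sigma$-uniform bounds from \eqref{EstimateAlim}, the treatment of $\phi^{\sigma}(0)$, the two special cases $\beta=2$ and $\beta=0$) are consistent with the paper.
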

\begin{proof}
Since, we have $p \longmapsto  \mathcal{H}(x, \rho,p)$ is convex, then by weak lower semi-continuity we deduce that $\phi$ verifies 
\begin{align}
-\partial_t\phi  + \mathcal{H}(x, \rho,\nabla \phi ) \le 0, &  \label{EqEikonalHd0sbx}\\
\phi(T)\le\phi_{T}(x).& \label{EikonalInsbx}
\end{align}
By Theorem \ref{Thmlim}, we have $\rho^{\sigma}\mathcal{H}_{p}(x, \rho^{\sigma},\nabla \phi^{\sigma})$ is equi-integrable and therefore weakly converges to $w$  in $L^{1}(\mathrm{Q}_{T})$. 
From the definition of function $\Phi$  \eqref{functionPsi}
Then, we have 
\[
\int_{0}^{T}\int_{\Omega}\Phi(w,\rho) dx dt \le \liminf_{\sigma \to 0}\int_{0}^{T}\int_{\Omega}\Phi(w,\rho) dx dt = \liminf_{\sigma \to 0}\int_{0}^{T}\int_{\Omega} \rho^{\sigma}|\mathcal{H}_{p}(x, \rho^{\sigma},\nabla \phi^{\sigma})|^{2}dx dt \le C,
\]
hence, we have $\Phi(w,\rho) \in L^{1}(\mathrm{Q}_{T})$. In particular we can set $b:= \frac{w}{\rho}1_{\rho>0}$, then $\rho$ is a weak solution of 
\begin{align}
\partial_t\rho  + div(b\rho)=0, &\qquad (t,x)\in\mathrm{Q}_{T}  \label{KOlm1}\\
\rho(t=0,x) =\rho_{0}(x),&\quad x\in\Omega, \label{kolm2}
\end{align}
with $\rho|b|^{2}\in L^{1}(\mathrm{Q}_{T})$. Collecting all the above properties, we get the following inequality. 
\begin{equation}\label{Myeq}
\int_{\Omega}\rho_{0}\phi(0) dx \le \int_{\Omega}\phi_{T}\rho(T) dx + \int_{0}^{T}\int_{\Omega} \rho \left(  b\cdot \nabla \phi - \mathcal{H}(x, \rho,\nabla \phi)\right)
\end{equation}
From system \eqref{EqFluidHd0x}-\eqref{EikonalInx}, we get
\begin{equation}
\int_{\Omega} \phi_{T}(x)\rho^{\sigma}(T) dx + \int_{0}^{T}\rho^{\sigma} \left(  \mathcal{H}_{p}(x, \rho^{\sigma} ,\nabla \phi^{\sigma} )\cdot \nabla \phi^{\sigma}  -\mathcal{H}(x, \rho^{\sigma},\nabla \phi^{\sigma} )\right) = \int_{\Omega} \phi^{\sigma}(0)\rho_{0} dx 
\end{equation}
We observe that $\phi^{\sigma}(0)$ is equi-integrable.  In addition, let $k>0$, $\left(\rho^{\sigma} -k \right)_{+}$ be a sub-solution of the HJB-type equation, then 
\begin{equation}
\begin{aligned}
\int_{\Omega} \left( \phi^{\sigma}(0) - k\right)_{+} dx &+ \int_{0}^{T}\int_{\Omega}\mathcal{H}(x, \rho^{\sigma} ,\nabla \phi^{\sigma} ) 1_{\phi^{\sigma}>k}\\&
 \le \int_{\Omega} \left( \phi_{T} - k \right)_{+} dx 
\end{aligned}
\end{equation}
which implies from the integrability of $\phi_{T}$ that $\int_{\Omega} \left( \phi^{\sigma}(0) - k\right)_{+} dx \to 0$ as $k \to +\infty$ uniformly with respect to $\sigma$. Then, $\phi^{\sigma}(0)$ is weakly converges in $L^{1}(\mathrm{Q}_{T})$ to some function $\tilde{\phi}_{0}$. Therefore we pass to the limit in HJB-type equation \eqref{EqEikonalHd0x} we obtain 
\[
\int_{\Omega}\varphi(0)\tilde{\phi}_{0} dx + \int_{0}^{T}\int_{\Omega} \phi \partial_{t}\varphi dx + \int_{0}^{T}\int_{\Omega} \mathcal{H}(x, \rho ,\nabla \phi)\varphi \le \int_{\Omega}\phi_{T}\varphi(T), \quad \forall \varphi \in C^{1}(\overline{\mathrm{Q}_{T}}), \,\, \varphi \ge 0.
\]

Then, taking a sequence $\varphi_{i}$ such that $\varphi_{i}(0)=1$ and $\varphi_{i}$ approximate the Dirac mass at $t=0$ we deduce that $\tilde{\phi}_{0} \le \phi(0)$ where $\phi(0)$ is the trace of $\phi$ at time $t=0$.

\begin{equation}
\begin{aligned}
\int_{\Omega}\phi_{T}\rho^{\sigma}(T) dx &+ \int_{0}^{T}\int_{\Omega} \rho^{\sigma}\left( \mathcal{H}_{p}(x, \rho^{\sigma} ,\nabla \phi^{\sigma})\cdot \nabla \phi^{\sigma} -\mathcal{H}(x, \rho^{\sigma} ,\nabla \phi^{\sigma}) \right)dx dt \\ & \int_{\Omega}\rho_{0}\tilde{\phi}_{0} dx \le \int_{\Omega}\rho_{0}\phi(0)dx 
\end{aligned}
\end{equation}
Now, using \eqref{Myeq}, we get
\begin{equation}\label{bound0}
\begin{aligned}
\limsup_{\sigma \to 0} &\Big (\int_{\Omega}\phi_{T}\rho^{\sigma}(T) dx + \int_{0}^{T}\int_{\Omega} \rho^{\sigma}\left( \mathcal{H}_{p}(x, \rho^{\sigma} ,\nabla \phi^{\sigma})\cdot\nabla \phi^{\sigma} - \mathcal{H}(x, \rho^{\sigma} ,\nabla \phi^{\sigma}) \right)dx dt \Big)\\ & \le \int_{\Omega}\phi_{T}\rho(T) dx  +  \int_{0}^{T}\int_{\Omega} \rho \left( b\cdot \nabla\phi - \mathcal{H}(x, \rho ,\nabla \phi)  \right) dx dt 
\end{aligned}
\end{equation}
We denote  $w^{\sigma}:= \rho^{\sigma} \mathcal{H}_{p}(x, \rho^{\sigma} ,\nabla \phi^{\sigma})$ where $w$ its weak limit in $L^{1}(\mathrm{Q}_{T})$. Moreover, we have the following
\begin{equation}
\begin{aligned}
 \int_{0}^{T}\int_{\Omega} \rho^{\sigma}&\left( \mathcal{H}_{p}(x, \rho^{\sigma} ,\nabla \phi^{\sigma})\cdot\nabla \phi^{\sigma} - \mathcal{H}(x, \rho^{\sigma} ,\nabla \phi^{\sigma}) \right)dx dt \\&=  \int_{0}^{T}\int_{\Omega}\left(\frac{1}{2}\rho^{\sigma} f^{\beta}(\rho^{\sigma})|\nabla \phi^{\sigma}|^{2} + \frac{1}{2} \rho^{\sigma} f^{2-\beta}(\rho^{\sigma})\right) dx dt \\&=  \int_{0}^{T}\int_{\Omega} \rho^{\sigma} \mathcal{H}^{\ast}(x, \rho^{\sigma} ,\mathcal{H}_{p}(x, \rho^{\sigma} ,\nabla \phi^{\sigma})) dx dt \\&= \int_{0}^{T}\int_{\Omega} \rho^{\sigma} \mathcal{H}^{\ast}(x, \rho^{\sigma} ,\frac{w^{\sigma}}{\rho^{\sigma}}) dx dt,
\end{aligned}
\end{equation}
where $\mathcal{H}^{\ast}$ (or  $\mathcal{L}$)  is the Legendre transform of $\mathcal{H}$ as defined in \eqref{EqHamil}-\eqref{EqHamilY}. 
From Fatou's lemma, we deduce 
\begin{equation}\label{bound}
\begin{aligned}
\liminf_{\sigma \to 0} &\int_{0}^{T}\int_{\Omega} \rho^{\sigma}\left( \mathcal{H}_{p}(x, \rho^{\sigma} ,\nabla \phi^{\sigma})\cdot\nabla \phi^{\sigma} - \mathcal{H}(x, \rho^{\sigma} ,\nabla \phi^{\sigma}) \right)dx dt \\ &=\liminf_{\sigma \to 0} \int_{0}^{T}\int_{\Omega} \left(\frac{1}{2}\rho^{\sigma} f^{\beta}(\rho^{\sigma})|\nabla \phi^{\sigma}|^{2} + \frac{1}{2} \rho^{\sigma} f^{2-\beta}(\rho^{\sigma})\right) dx dt  \\&  \ge  \int_{0}^{T}\int_{\Omega}\left(\frac{1}{2}\rho f^{\beta}(\rho)|\nabla \phi|^{2} + \frac{1}{2} \rho f^{2-\beta}(\rho)\right) dx dt =\int_{0}^{T}\int_{\Omega} \rho  \mathcal{H}^{\ast}(x, \rho ,\frac{w}{\rho})  dx dt .
\end{aligned}
\end{equation}
We have 
\[
 \int_{0}^{T} \int_{\Omega} \rho\left(b\cdot\nabla \phi - \mathcal{H}(x, \rho ,\nabla \phi) \right) dx dt  \le \liminf_{\sigma \to 0} \int_{0}^{T}\int_{\Omega} \rho^{\sigma}\left( \mathcal{H}_{p}(x, \rho^{\sigma} ,\nabla \phi^{\sigma})\cdot\nabla \phi^{\sigma} - \mathcal{H}(x, \rho^{\sigma} ,\nabla \phi^{\sigma}) \right)dx dt,
\]
then we can deduce that 
\[
\int_{0}^{T} \int_{\Omega} \rho\left(b\cdot\nabla \phi - \mathcal{H}(x, \rho ,\nabla \phi) \right) dx dt - \int_{0}^{T}\int_{\Omega} \rho  \mathcal{H}^{\ast}(x, \rho ,\frac{w}{\rho})  dx dt  \le 0.
\]
Therefore, from \ref{bound0} and \ref{bound} we obtain
\begin{equation}\label{Eqss}
\begin{aligned}
\limsup_{\sigma \to 0}&\int_{\Omega}\phi_{T}\rho^{\sigma}(T) dx  \le \int_{\Omega}\phi_{T}\rho(T) dx + \\& \int_{0}^{T} \int_{\Omega} \rho\left(b\cdot\nabla \phi - \mathcal{H}(x, \rho ,\nabla \phi) \right) dx dt - \int_{0}^{T}\int_{\Omega} \rho  \mathcal{H}^{\ast}(x, \rho ,\frac{w}{\rho})  dx dt \\&\le  \int_{\Omega}\phi_{T}\rho(T) dx,
\end{aligned}
\end{equation}
it follows that the limit of the right-hand side and the left-hand side coincides, then we conclude 
\begin{equation}
 \int_{0}^{T} \int_{\Omega} \rho\left(\frac{w}{\rho}\cdot\nabla \phi - \mathcal{H}(x, \rho ,\nabla \phi)  - \mathcal{H}^{\ast}(x, \rho ,\frac{w}{\rho})  \right) dx dt =0,
\end{equation}
We have
\begin{equation}
\begin{aligned}
 \rho&\left(\frac{w}{\rho}\cdot\nabla \phi - \mathcal{H}(x, \rho ,\nabla \phi)  - \mathcal{H}^{\ast}(x, \rho ,\frac{w}{\rho})  \right) \\&=\left(w\cdot\nabla \phi  - \frac{1}{2}\rho f^{\beta}(\rho)|\nabla \phi|^{2} + \frac{1}{2} \rho f^{2-\beta}(\rho) - \frac{1}{2}\rho f^{\beta}(\rho)|\nabla \phi|^{2} - \frac{1}{2} \rho f^{2-\beta}(\rho)\right)\\&=\left(w\cdot\nabla \phi -\rho f^{\beta}(\rho)|\nabla \phi|^{2} \right),
\end{aligned}
\end{equation}
then 
\begin{equation}
 \int_{0}^{T} \int_{\Omega} \left( w\cdot\nabla \phi -\rho f^{\beta}(\rho)|\nabla \phi|^{2}  \right) dx dt =0,
\end{equation}
which implies that $w=\rho f^{\beta}(\rho)\nabla \phi =\rho \mathcal{H}_{p}(x, \rho ,\nabla \phi)$. Therefore, we have \eqref{bound} holds when $w=\rho \mathcal{H}_{p}(x, \rho ,\nabla \phi)$. then ,we have 
\begin{equation}\label{boundw}
\begin{aligned}
\int_{\Omega}\phi_{T}\rho(T) dx &+ \int_{0}^{T}\int_{\Omega}\rho\left( \mathcal{H}_{p}(x, \rho ,\nabla \phi)\cdot\nabla \phi - \mathcal{H}(x, \rho ,\nabla \phi) \right)dx dt \\ & \le \liminf_{\sigma \to 0} \Big (\int_{\Omega}\phi_{T}\rho^{\sigma}(T) dx + \int_{0}^{T}\int_{\Omega} \rho^{\sigma}\left( \mathcal{H}_{p}(x, \rho^{\sigma} ,\nabla \phi^{\sigma})\cdot\nabla \phi^{\sigma} - \mathcal{H}(x, \rho^{\sigma} ,\nabla \phi^{\sigma}) \right)dx dt \Big)\\ & \le \int_{\Omega}\phi(T)\rho(T) dx. 
\end{aligned}
\end{equation}
Combining  the previous inequality with \eqref{Myeq} its yields the energy equality \eqref{WeakfomulationF} and we deduce that $(\rho,\phi)$ is a weak solution of the GH system \eqref{EqFluidHd0sb}\eqref{EikonalInsb} in the sense of Definition \ref{WeakSubSolSystem}. Finally, the proof of the uniqueness results follows from Proposition \ref{propoexun}.
\end{proof}
\begin{remark}

\begin{itemize}
    \item We have performed the analysis above with a specified terminal condition, namely the boundary conditions \eqref{EikonalInx}, which represent a special case of the more general payoff problem. This more general problem employs the following expression:
\[
\phi^{\sigma}(t=T,x)= G(x,\rho^{\sigma}(T)),\quad x\in\Omega,
\]
where $G$ is known as the payoff function. In this context, the Young measures serve as the main tools for proving the existence of the solution, as discussed in\cite{achdou2018mean,achdou2020introduction}.

\item We can show the existence and uniqueness of classical solutions to the system \eqref{EqFluidHd0x}-\eqref{EikonalInx} under the monotonicity condition by using the results of S. Munoz \cite{munoz2022classical,munoz2023classical}. In fact, we assume that the initial data $\rho_{0}$ is in $C^{4}(\Omega)$ and $\rho_{0} \in(0,\rho_{m})$. Then, there exists a unique classical solution $(\rho,\phi)\in C^{3,\alpha}(\bar{\mathrm{Q}_{T}}) \times C^{2,\alpha}(\bar{\mathrm{Q}_{T}})$ to \eqref{EqFluidHd0x}-\eqref{EikonalInx}, for $0<\alpha<1$.
\end{itemize}
\end{remark}

\section{Numerical experiments}\label{sec6}
This section is devoted to the numerical experiments for solving our quasi-stationary fluid \eqref{QuasiStat} by FEM. The computations have been performed with the FreeFem++ library\footnote{https://freefem.org}. It is important to highlight that the presence of viscosity within the model plays a pivotal role in enhancing the numerical stability of our computational scheme. Without viscosity, we would be dealing with a nonlinear hyperbolic system, which falls outside the scope of FreeFem++'s capabilities for solving such complex systems effectively.

For the numerical example, we consider the following domain $\Omega= [0,L]\times[0,R]$ where $L=4$ and $R=2$. A periodic boundary condition for the density $\rho$ in the $x-$direction, that is, $\rho(t,0,y)=\rho(t,L,y)$ for all $y\in [0,R]$, and the y direction we consider the Neumann boundary condition. The viscosity $\sigma \ge 0$ is sufficiently small, $\sigma =0.01$. The time of horizon $T$ is sufficiently large fixed to $T=50$, with a time step of $dt = 5\times10^{-2}$.  We have built triangular meshes $\mathcal{T}_{h}$ of characteristic size $h=1/40$. For initial and boundary data, we have considered an initial pedestrian density $\rho_{0}=\rho_{0i}+\rho_{0p}$, where $\rho_{0i}=0.5$ and $\rho_{0p}$ is a Gaussian distribution localized at the point $(x,y)=(1,1)$ given by 
\[
\rho_{0p}=0.2\times\exp{(-10\times((x-1)^2 +(y-1)^2)}.
\]
$\rho_{0p}$ is the perturbation of the initial data $\rho_{0i}$. We also consider a periodic speed $\nabla \phi$. We assume that $\bar{\rho}=0.5$. Our particular solution of  the GH model is given by 
 \[
 \left(\bar{\rho},\bar{\phi}\right)=\left(\bar{\rho},f^{1-\beta}(\bar{\rho})(L-x)\right), \quad \quad \forall \beta \in [0,2].
 \]
 Therefore, we assume that $\bar{\rho}=0.5$, and 
 \[
 \phi= 0 \qquad (x,y)\in \Sigma\qquad  \phi= f^{1-\beta}(\bar{\rho})L. \qquad (x,y)\in \Gamma.
 \]

Analyzing the Hughes model with $\beta = 2$, as depicted in the velocity fields, see Figures \ref{fsofigbeta20}-\ref{fsofigbeta22}, we observe a clear trend where pedestrians actively avoid high-density regions. The numerical results, illustrated in Figure \ref{fsofigbeta22}, offer insights into the long-time behavior of the model, proving the presence of dispersion and diffusion phenomena over time.

In the scenario where $\beta = 1$, pedestrians are indifferent to the density of the regions because their objective is to reach a specific target. This situation is analogous to the case of pedestrain want to be close to \textit{Hajar Aswad} as explained in section \ref{sec21}, see Figures \ref{fsofigbeta10}-\ref{fsofigbeta12}.

However, when we set $\beta = 0$, we observe a different behavior: pedestrians tend to move toward densely populated areas, a behavior that contradicts the classical Hughes model where individuals typically steer clear of high-density regions. The formation of concentration phenomena becomes evident, as seen in Figures \ref{fsofigbeta00}-\ref{fsofigbeta02}. Figure \ref{fsofigbeta02} displays results up to a horizon time of $T=13.5$. Beyond this point, we begin to notice some instability in the model, possibly arising from the concentration phenomena within the system or due to the sensitivty of numerical scheme to small viscosity values in the FreeFem++ code.


In our concluding remarks, it becomes evident that our numerical simulations not only confirm the validity of the Hughes model, but also shed light on the pivotal role that the parameter $\beta$ can play as a safety measure within the model, contributing to the effective regulation of pedestrian evacuation dynamics. This dual outcome emphasizes the capacity of the model to not only replicate real-world pedestrian behavior but also to serve as a versatile tool for enhancing safety protocols in crowded environments.


Finally, our work opens the door to exploration of stability/instability and long-time behavior within the MFG system. In our forthcoming paper \cite{GMP2023preprint}, we present a comprehensive analysis of the stability of the GH model. 

\begin{figure}[h]
	\begin{tabular}{cc}
		\includegraphics[scale=0.2]{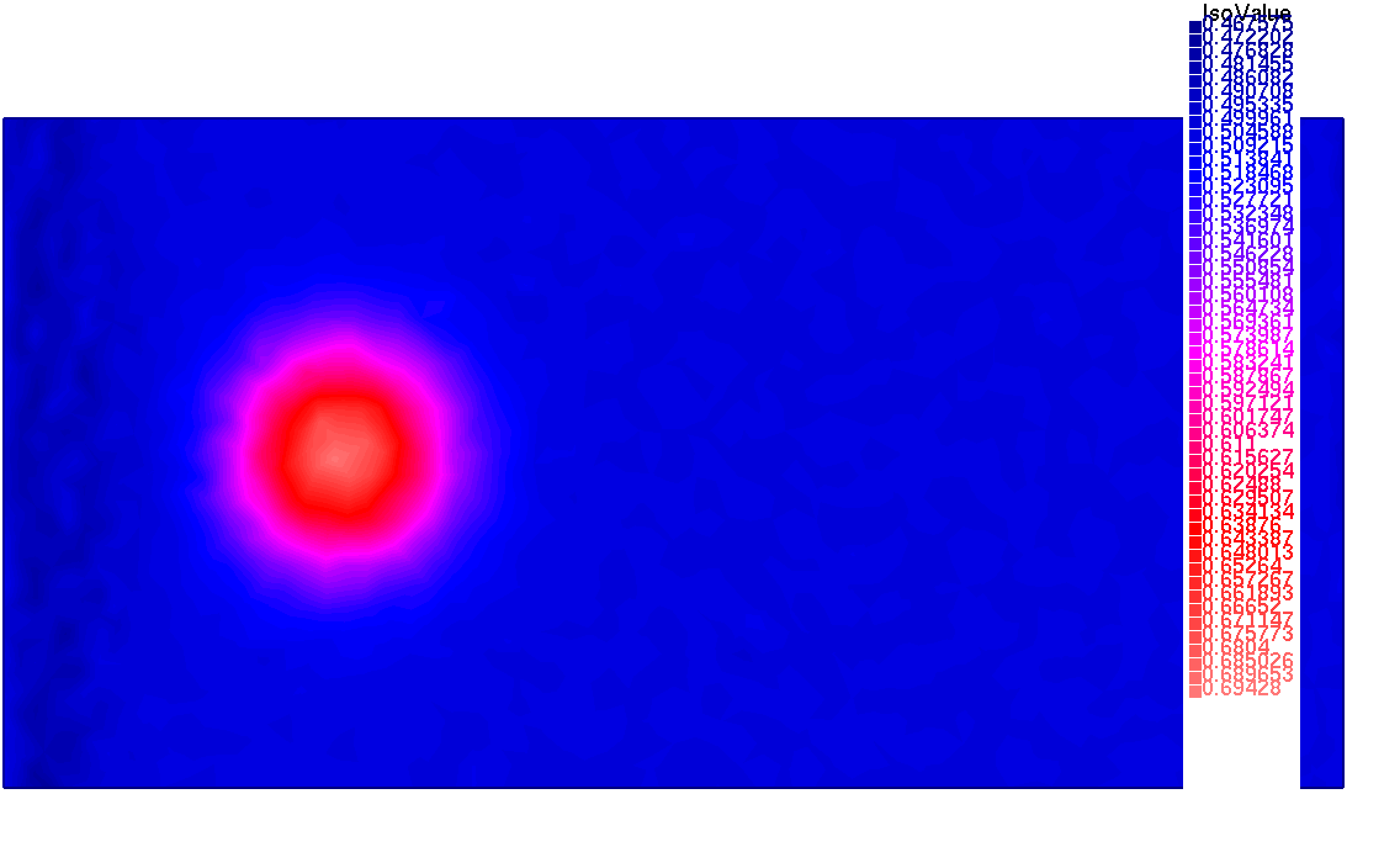}&\includegraphics[scale=0.2]{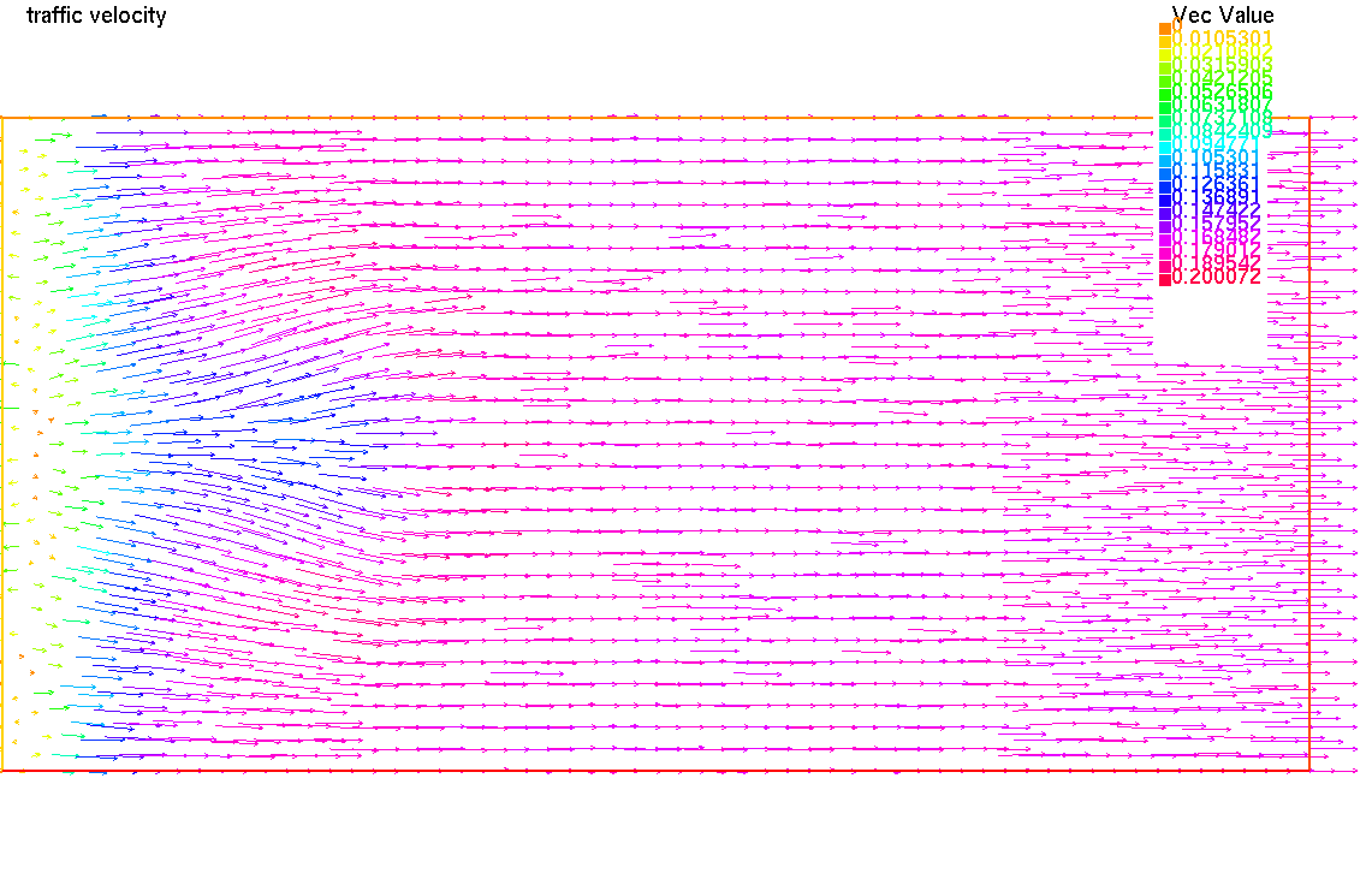}\\
		(a) & (b)
	\end{tabular}
	\caption{Schematic illustration of  density contour levels (a) and velocity field (b) at the initial instant $t=0$ ($\beta=2$). }
	\label{fsofigbeta20}
\end{figure}

\begin{figure}[!h]
	\begin{tabular}{cc}
		\includegraphics[scale=0.25]{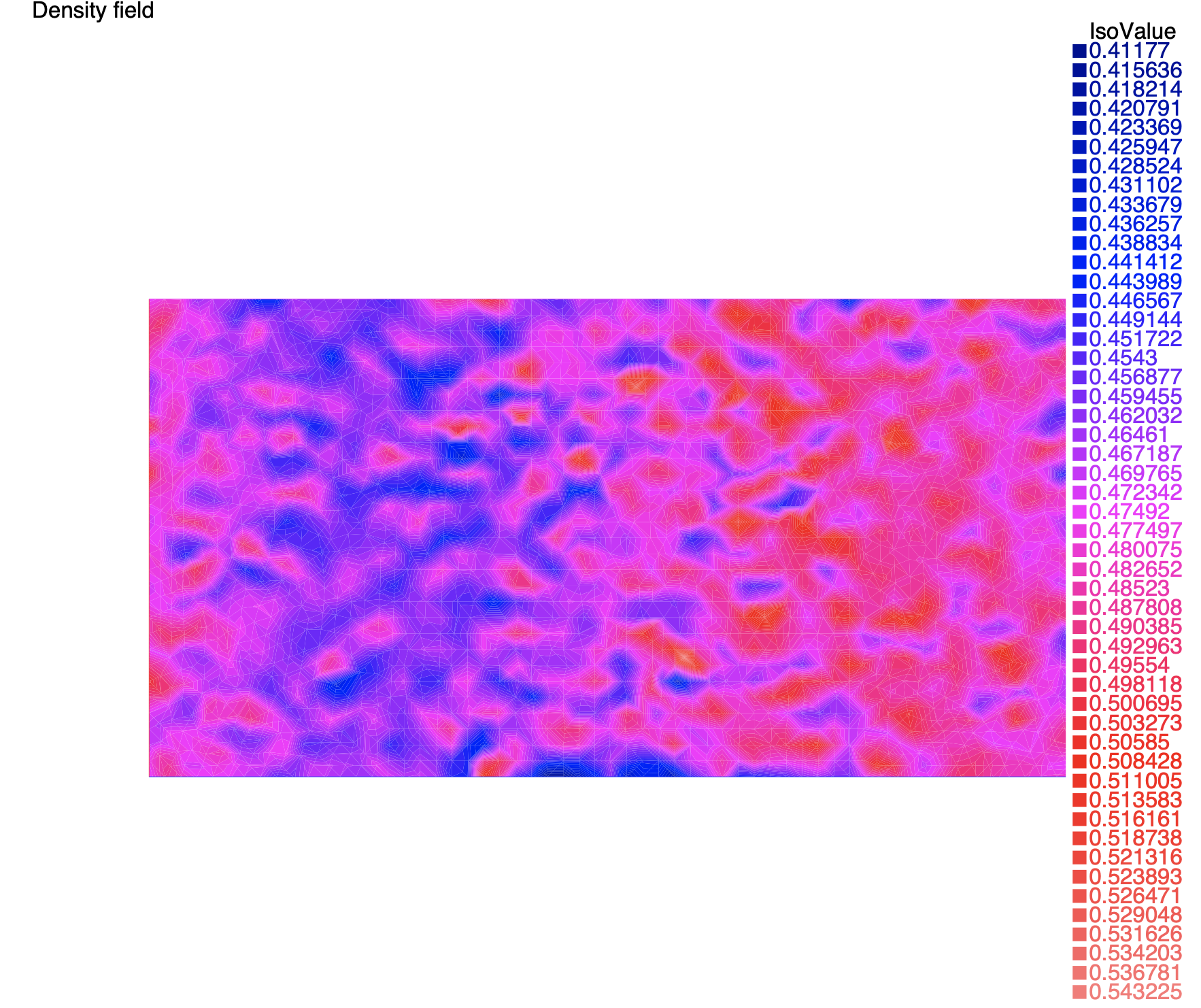}&\includegraphics[scale=0.25]{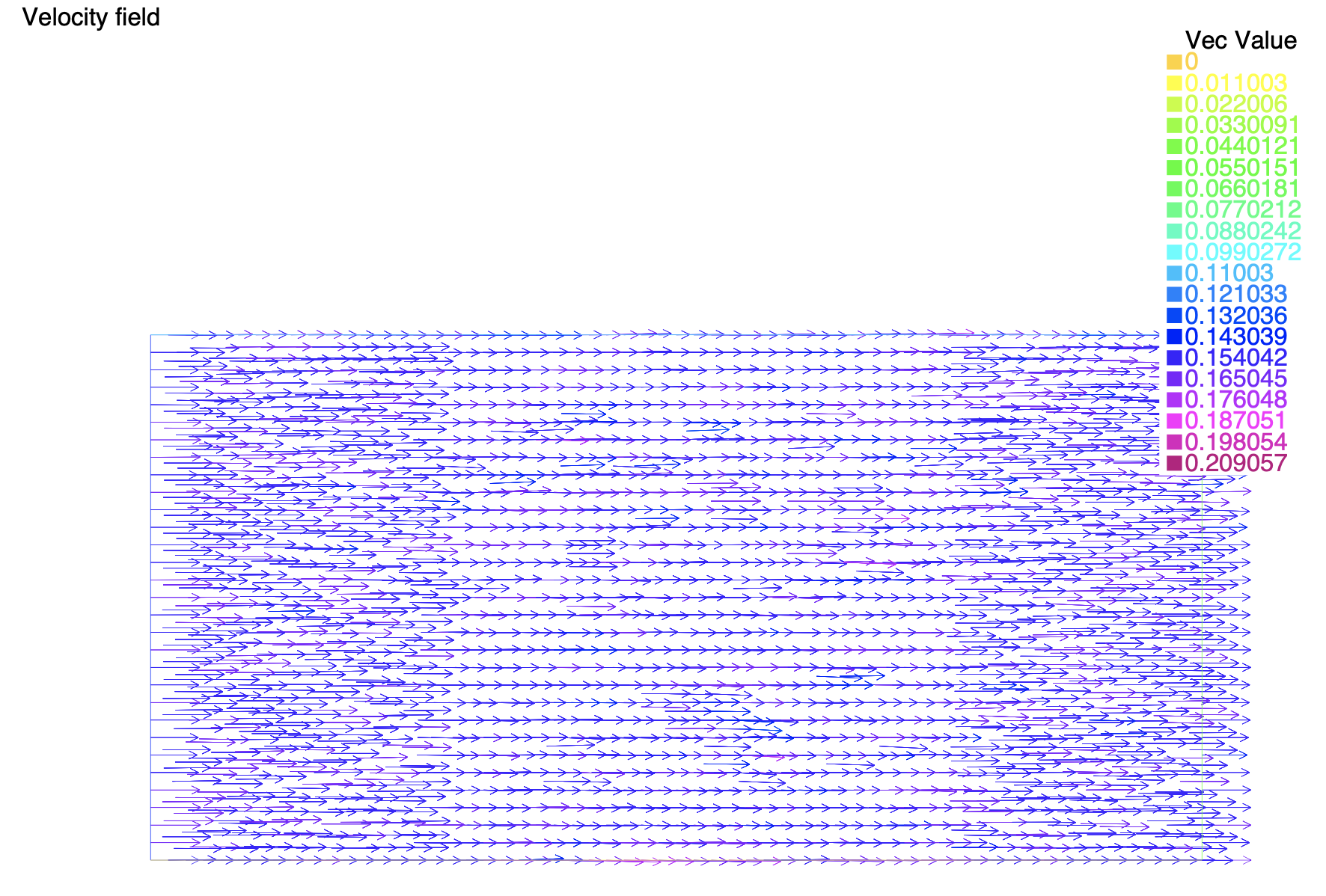}\\
		(a) & (b)
	\end{tabular}
	\caption{Schematic illustration of  density contour levels (a) and velocity field (b) at  final time  $T$ and   $\beta=2$. }
	\label{fsofigbeta22}
\end{figure}
\begin{figure}
	\begin{tabular}{cc}
		\includegraphics[scale=0.2]{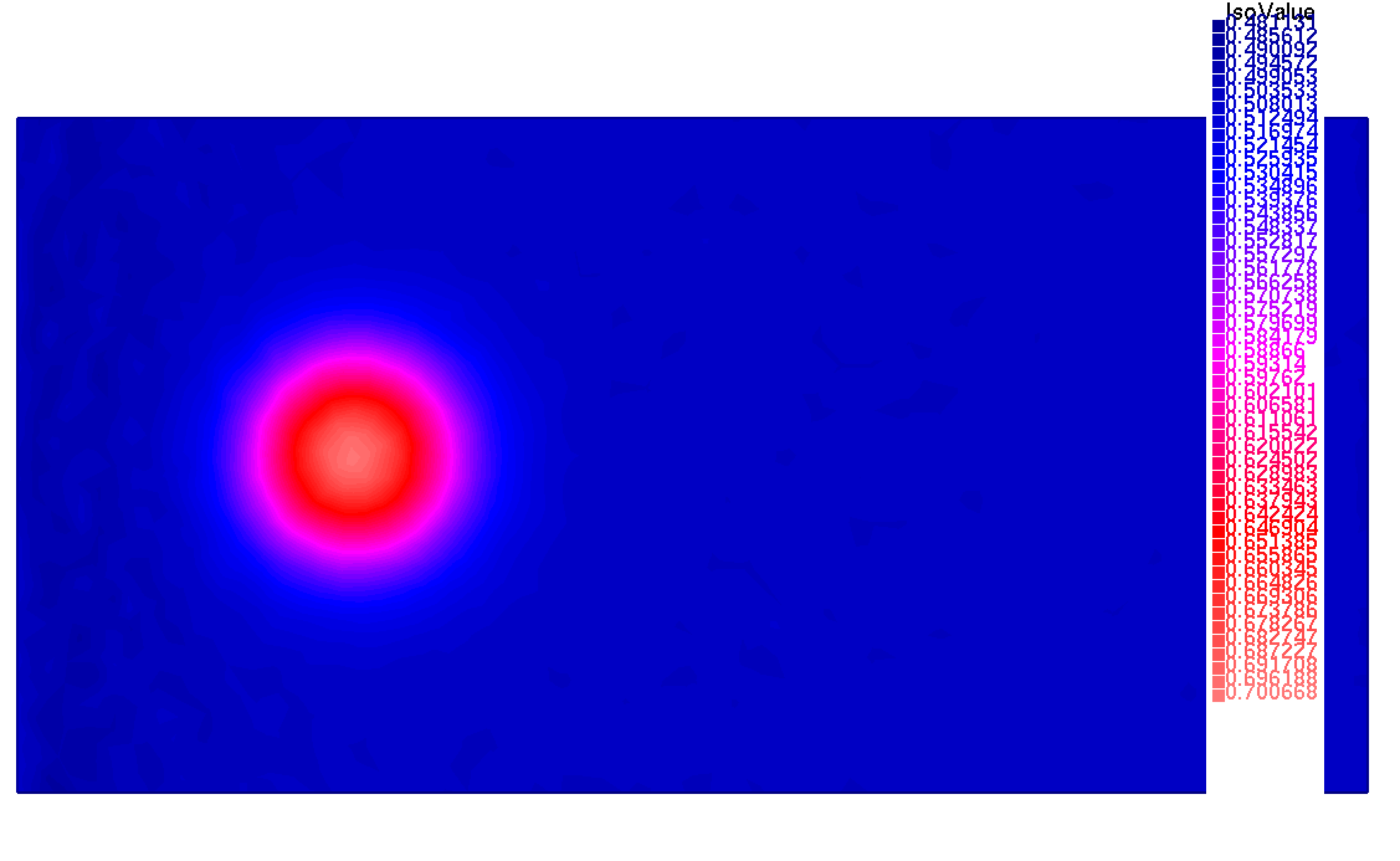}&\includegraphics[scale=0.2]{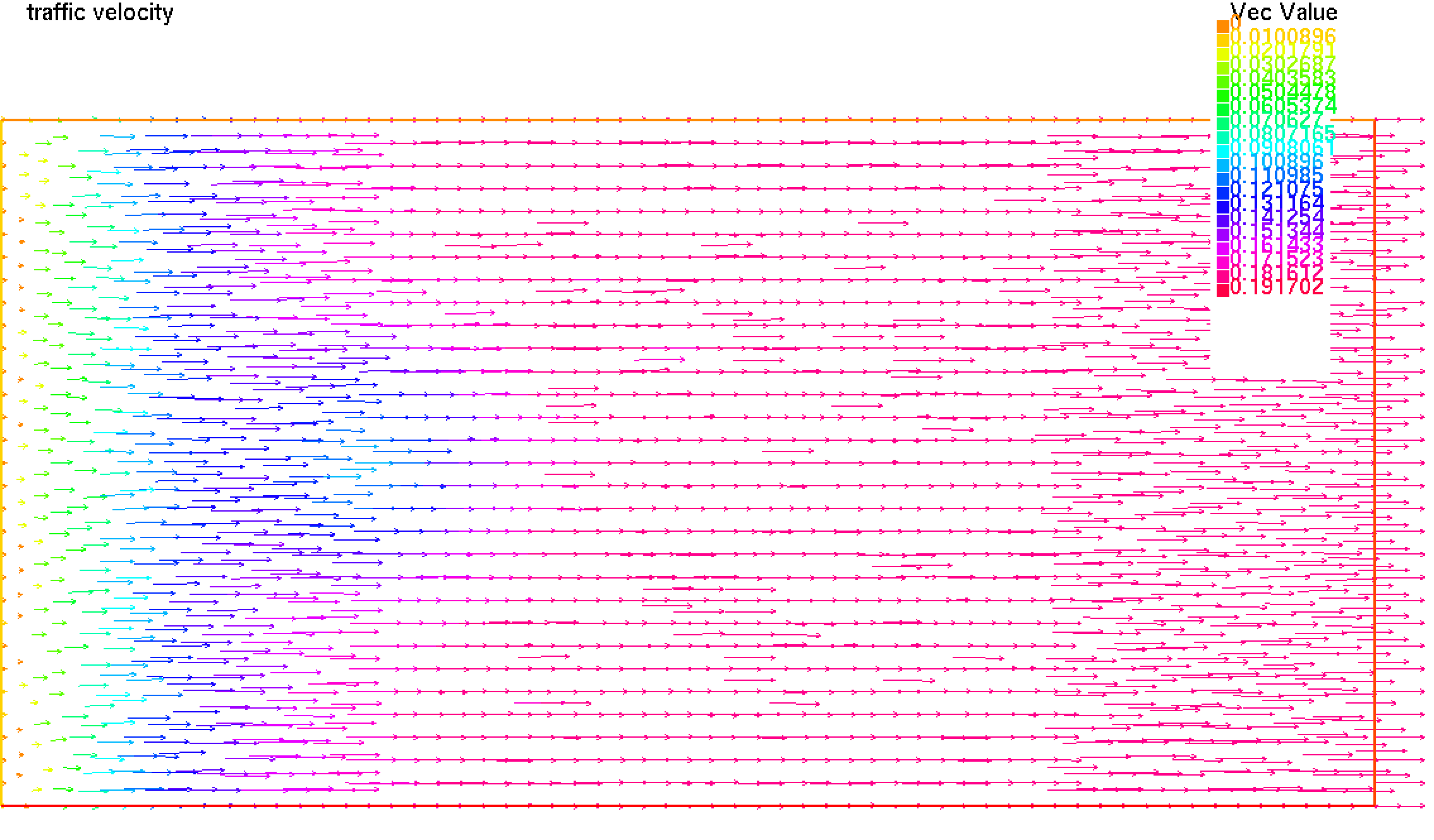}\\
		(a) & (b)
	\end{tabular}
	\caption{Schematic illustration of  density contour levels (a) and velocity field (b) at the initial instant $t=0$ ($\beta=1$). }
	\label{fsofigbeta10}
\end{figure}
%

\begin{figure}
	\begin{tabular}{cc}
		\includegraphics[scale=0.25]{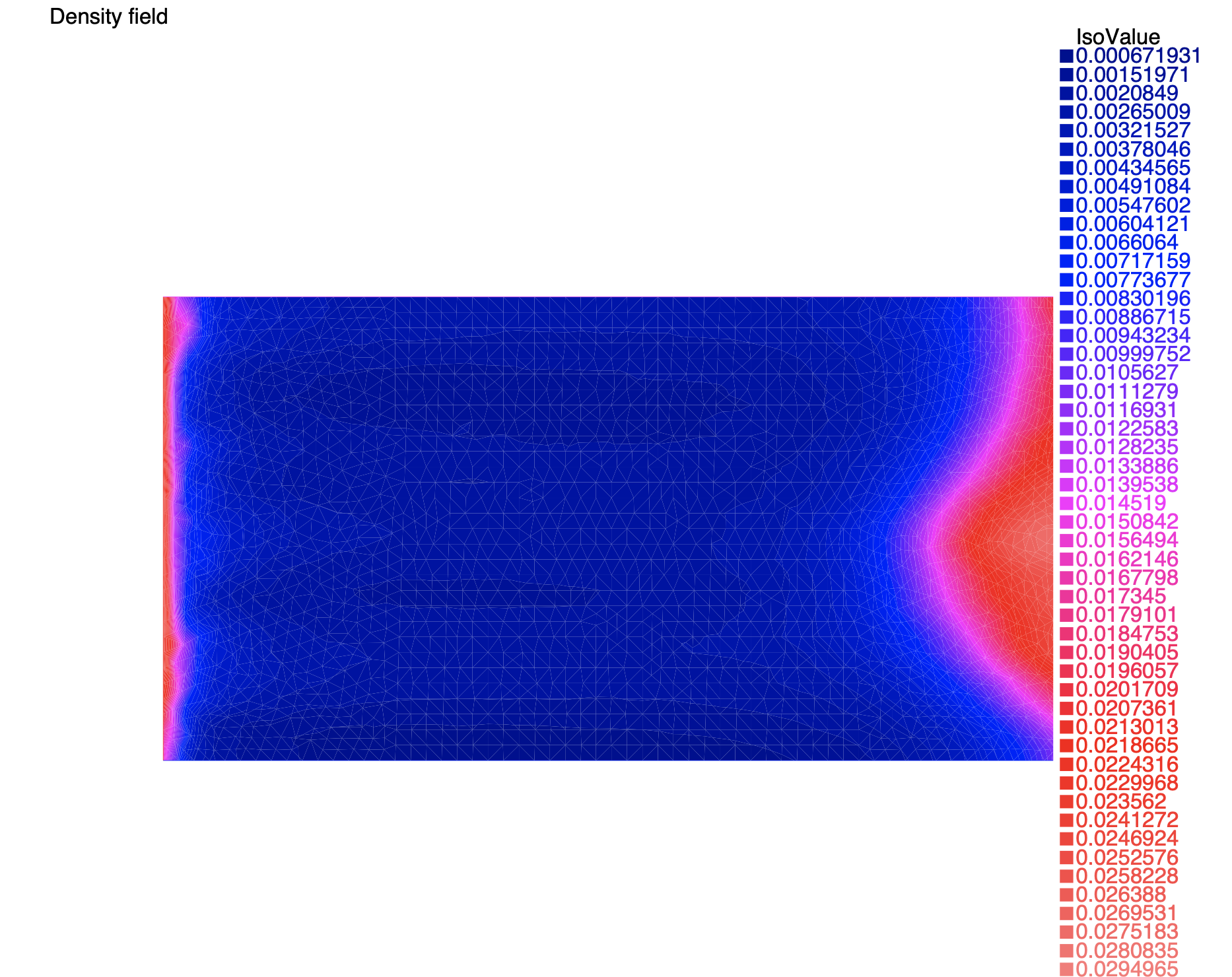}&\includegraphics[scale=0.25]{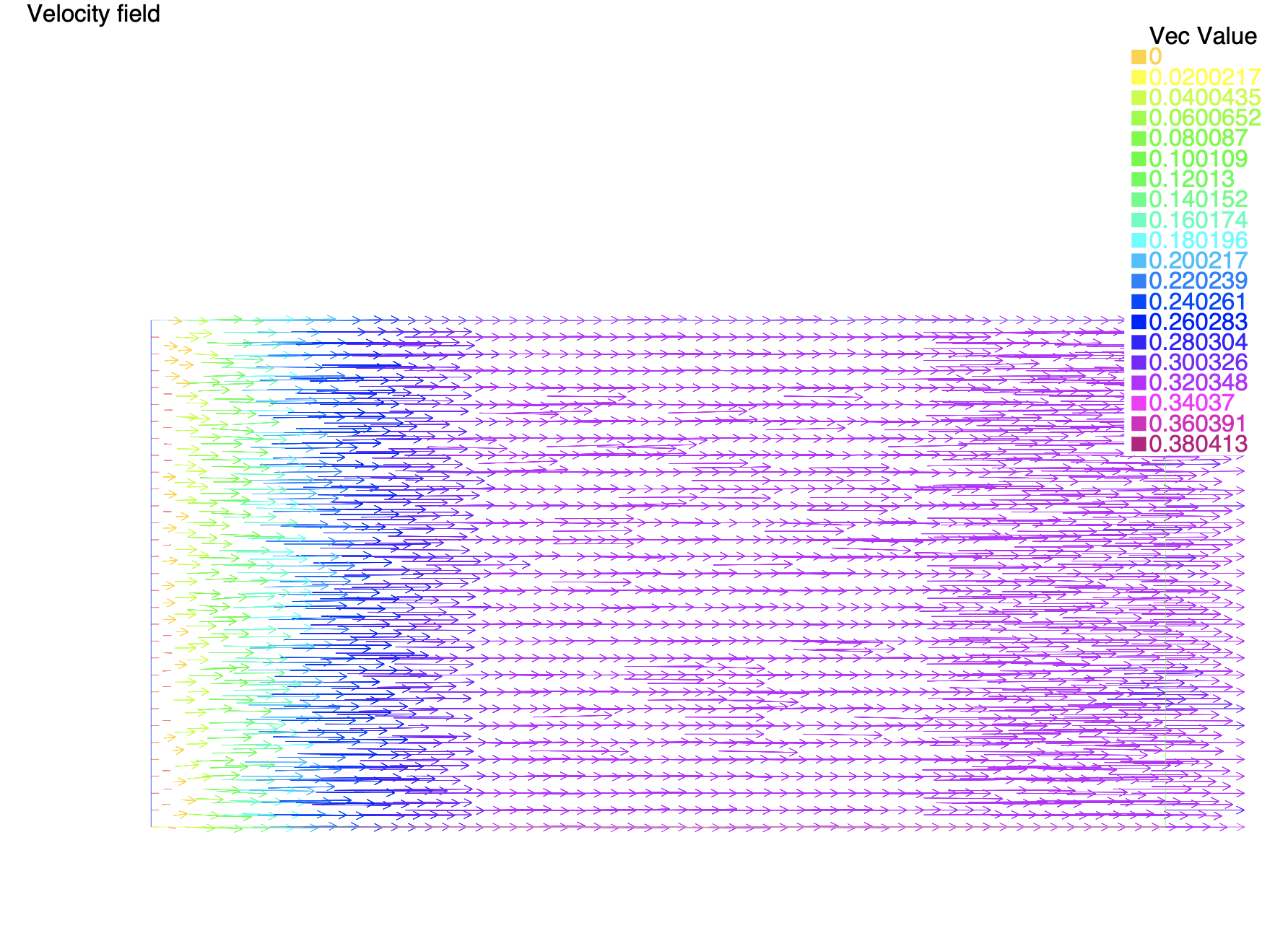}\\
		(a) & (b)
	\end{tabular}
	\caption{Schematic illustration of  density contour levels (a) and velocity field (b) at  final time  $T$ and   $\beta=1$. }
		\label{fsofigbeta12}
\end{figure}
\begin{figure}[!h]
	\begin{tabular}{cc}
		\includegraphics[scale=0.2]{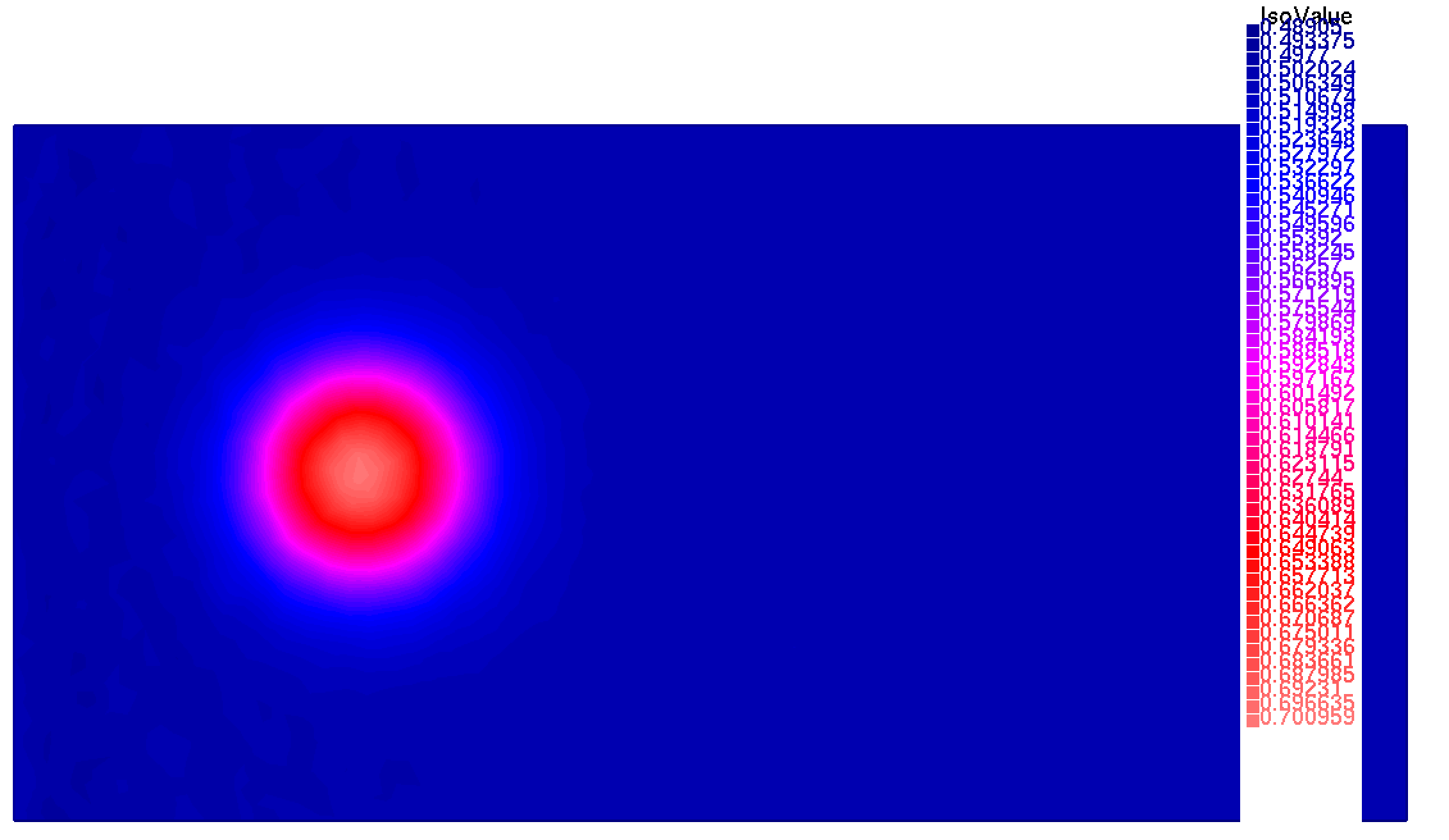}&\includegraphics[scale=0.2]{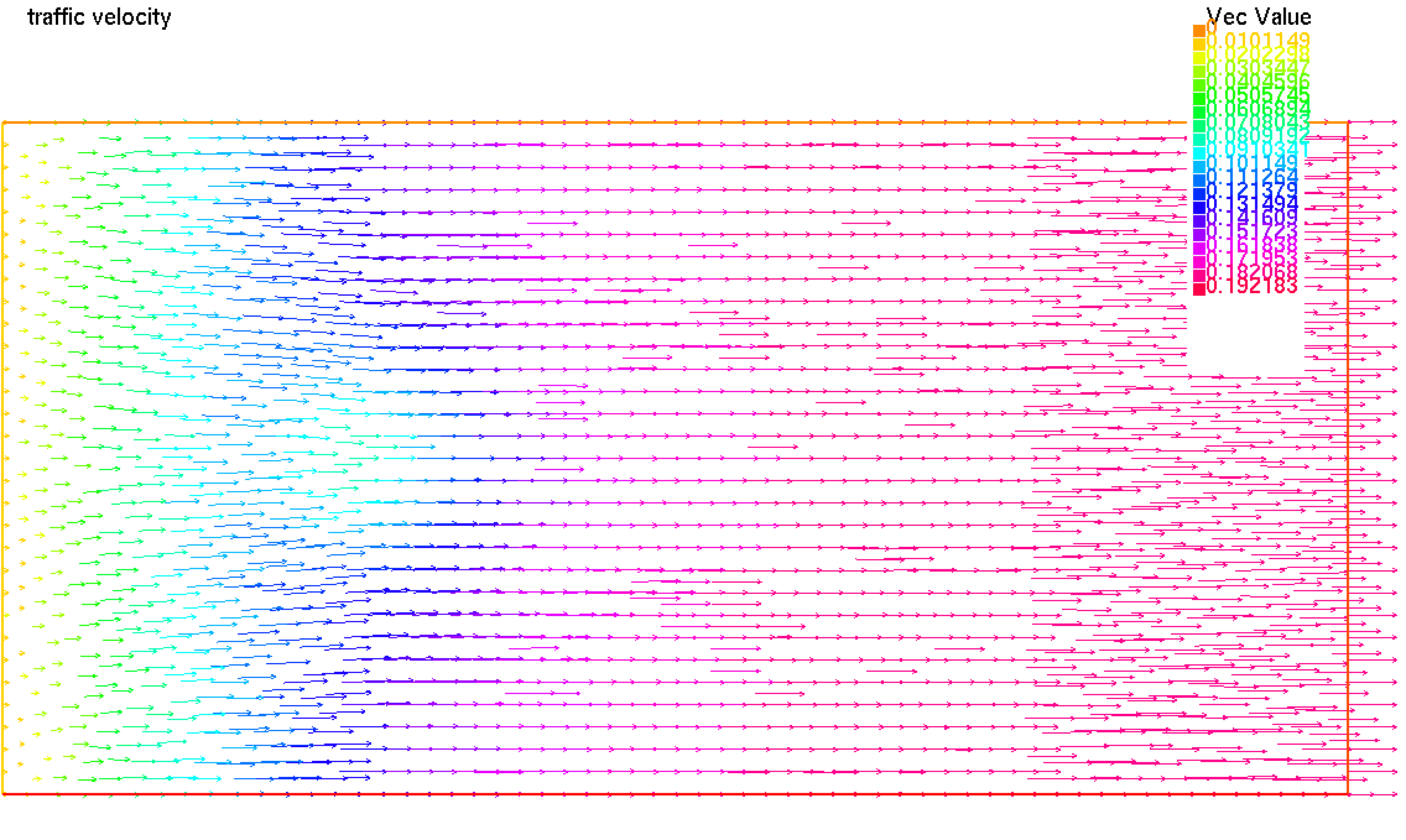}\\
		(a) & (b)
	\end{tabular}
	\caption{Schematic illustration of  density contour levels (a) and velocity field (b) at the initial instant $t=0$ and   $\beta=0$. }
	\label{fsofigbeta00}
\end{figure}

\begin{figure}[!h]
	\begin{tabular}{cc}
		\includegraphics[scale=0.25]{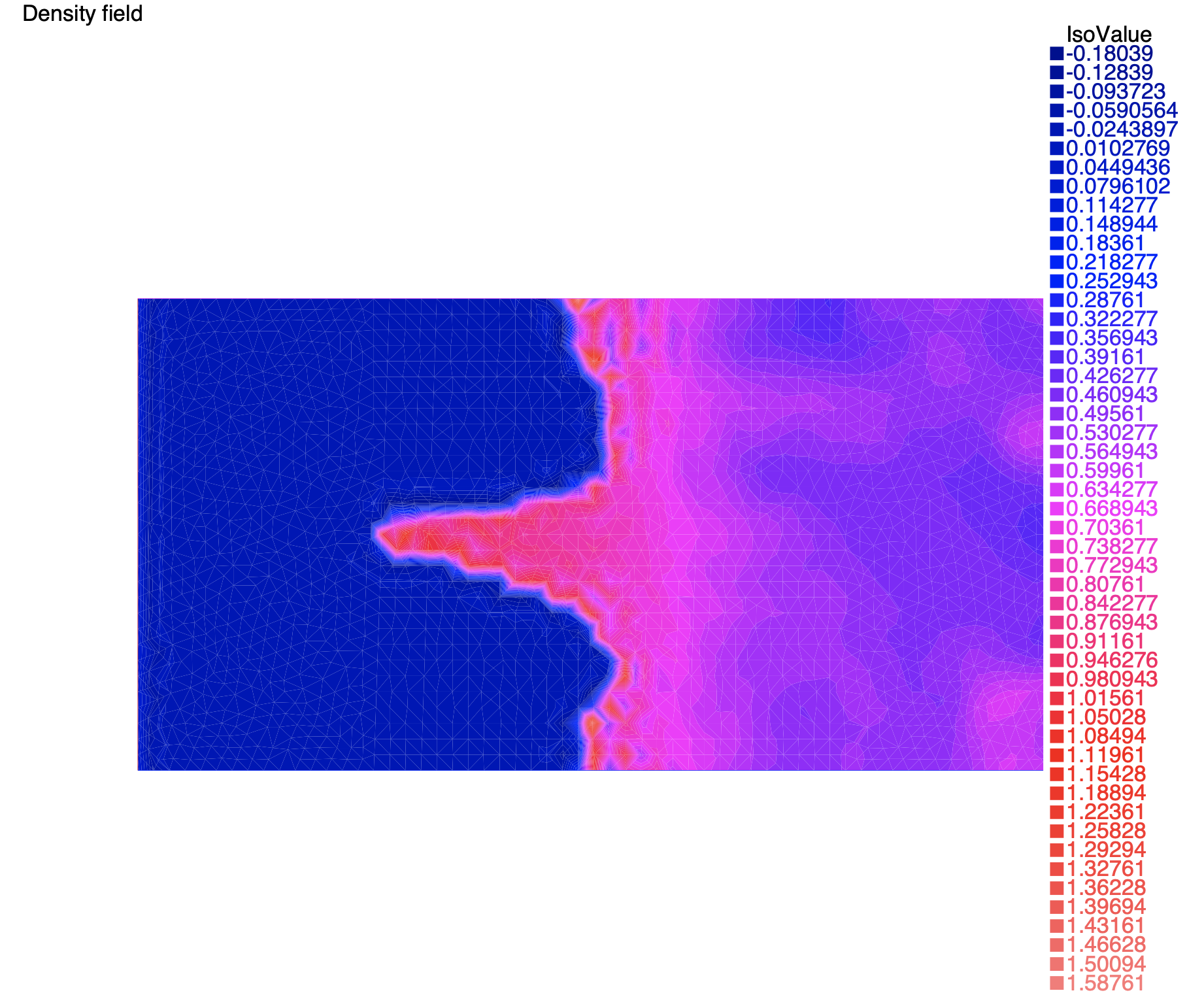}&\includegraphics[scale=0.25]{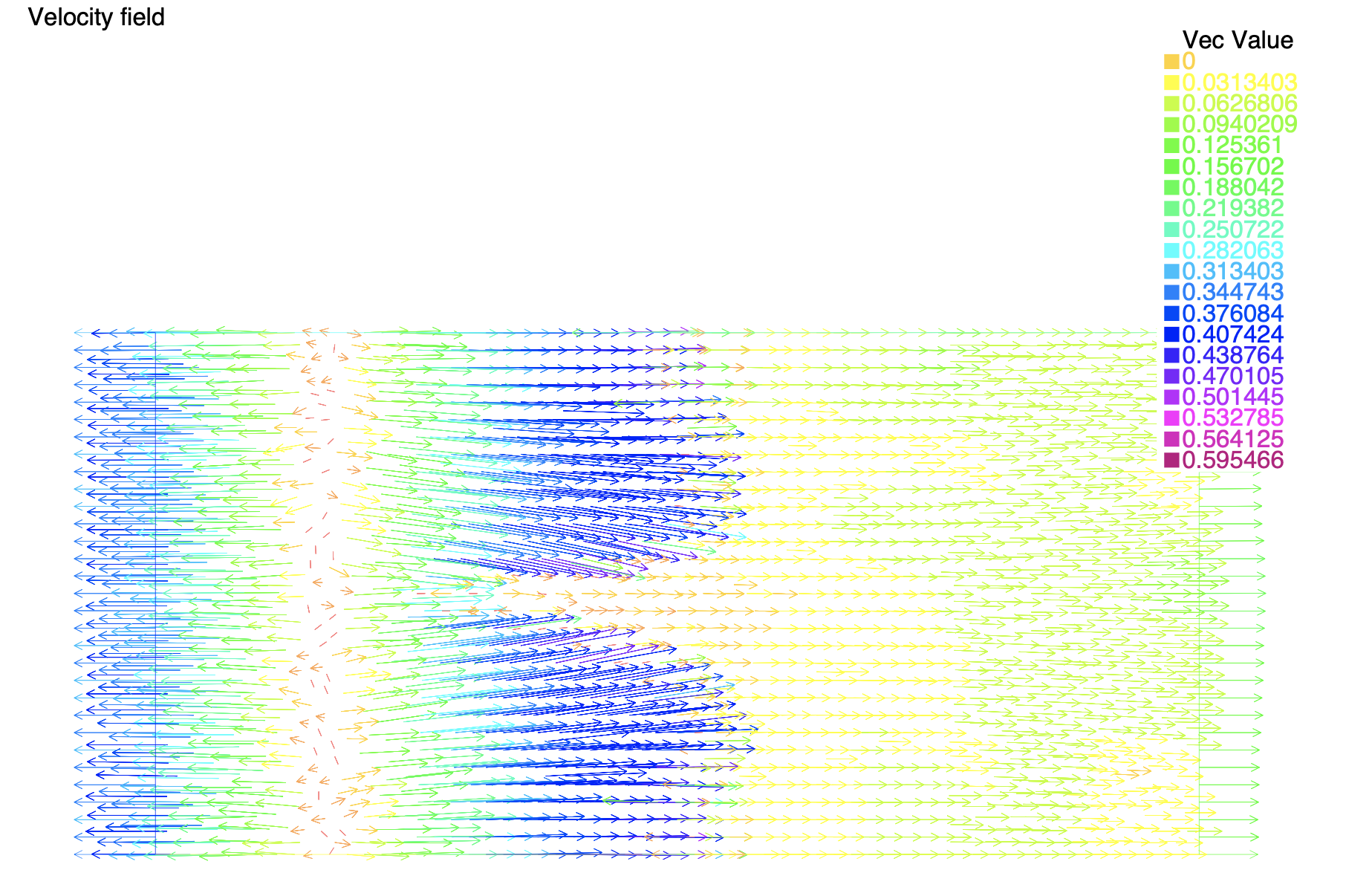}\\
		(a) & (b)
	\end{tabular}
	\caption{Schematic illustration of  density contour levels (a) and velocity field (b) at time  $t=13.5$ and   $\beta=0$. }
	\label{fsofigbeta02}
\end{figure}

\section{Acknowledgments}
The work of the authors is supported by Tamkeen under the NYU Abu Dhabi Research Institute grant of the center SITE. M.G. and N.M. thank Prof. Diogo Gomes (KAUST) for discussing the problem during his visit to NYU Abu Dhabi. Special thanks are extended to Prof. Diogo Gomes (KAUST) and the entire KAUST team for their warm hospitality, greatly appreciated by M.G. Furthermore, M.G. expresses gratitude to Prof. Alessio Porretta (Università di Roma Tor Vergata) for bringing the reference \cite{achdou2020introduction} to our attention.

\bibliographystyle{unsrt}
\bibliography{HughesmodelingRefs}

\end{document}